\documentclass[compress, preprint,9pt]{elsarticle}

\usepackage[abs]{overpic}
\usepackage{graphicx}
\usepackage{subfigure}
\usepackage{stmaryrd}
\usepackage{amsfonts, color, amssymb}
\usepackage{amsmath}
\usepackage{amsthm}
\usepackage{epsfig}
\usepackage{psfrag}
\usepackage{bm}
\usepackage{paralist}
\usepackage{algorithm}
\usepackage{algorithmicx}
\usepackage{algpseudocode}
\usepackage{appendix}
\usepackage{caption}
\usepackage{hyperref}
\usepackage{color}
\usepackage{mathtools}

\usepackage[misc]{ifsym}

\usepackage[a4paper, total={6in,8in}]{geometry}

\allowdisplaybreaks[4]
\newtheorem{theorem}{Theorem}[section] %
\newtheorem{remark}{Remark}[section]

\newtheorem{example}{Example}[section]

\newtheorem{lemma}{Lemma}[section]
\newcommand{\bx}{{\bf x}}

\newcommand{\bw}{{\bf w}}

\newcommand{\bv}{{\mathbf v}}

\def\bg{{\bf g}}

\def\bn{{\bf n}}
\def\3bar{{|\hspace{-.02in}|\hspace{-.02in}|}}

\allowdisplaybreaks[4]
\def\jump#1{{[\![#1]\!]}}
\newcommand{\trb}[1]{|\!|\!|#1|\!|\!|}

\allowdisplaybreaks 

\numberwithin{equation}{section}

\begin{document}

\begin{frontmatter}

\title{The Immersed Discontinuous Galerkin Method for Elliptic Interface Problems}

\author[mymainaddress]{Lin Yang}
\ead{yanglin25@amss.ac.cn}
\author[mysecondaddress]{Qilong Zhai}
\ead{zhaiql@jlu.edu.cn}

\address[mymainaddress]{Academy of Mathematics and Systems Science, Chinese Academy of Sciences, Beijing 100190, P. R. China} 
\address[mysecondaddress]{School of Mathematics, Jilin University, Changchun 130012, Jilin, P. R. China}

\begin{abstract}
This paper is devoted to construction and convergence analysis of the linear explicit immersed finite element (IFE) function. For the interface elements, the proposed IFE functions precisely satisfy the interface conditions on the actual interface. The IFE functions are constructed in an explicit form and can be obtained directly without solving any auxiliary problems or local linear systems. Although the constructed IFE functions are non-polynomial, we establish rigorous theoretical analysis showing that they achieve optimal approximation properties and satisfy the essential trace inequalities. And the constants in the analysis are independent of how the interface cuts through the elements. Based on these IFE functions, an immersed discontinuous Galerkin numerical scheme is developed. Several numerical experiments are implemented to confirm that both the IFE functions and the numerical method achieve optimal convergence rates in the $H^1$ and $L^2$ norms. Furthermore, the numerical results indicate that the condition numbers of the stiffness matrices are robust with respect to the interface location.
\end{abstract}

\begin{keyword}
Immersed finite element method; Discontinuous Galerkin finite element method; Interface problems; Unfitted meshes.
\end{keyword}

\end{frontmatter}


\section{Introduction}
Consider a bounded domain $\Omega \subset \mathbb{R}^2$, which is partitioned by the {smooth} interface~$\Gamma$ into two subdomains~$\Omega_1$ and~$\Omega_2$ with $\Gamma = \partial\Omega_1 \cap \partial\Omega_2$. In this work, we study the following second-order elliptic interface problems:
\begin{align}
	-\nabla \cdot (\beta \nabla u) &= f, \quad \text{in} \, \Omega_1 \cup \Omega_2, \label{Elliptic_problem}\\
	u &= g ,\quad \text{on} \, \partial \Omega, \label{Dirichlet_Boundary}
\end{align}

where $f \in L^2(\Omega)$, $g \in H^{\tfrac{1}{2}}(\partial \Omega)$, and $\beta$ is a piecewise constant function
\begin{eqnarray*}
	\beta=\left\{
		\begin{array}{rl}
			\beta_1, &(x,y) \in \Omega_1, \\
			\beta_2, & (x,y) \in \Omega_2.
		\end{array}\right.
\end{eqnarray*}
{Without loss of generality, we suppose $\beta_2 \geqslant \beta_1 > 0$.} Set $u_1=u|_{\Omega_{1}}$ and $u_2=u|_{\Omega_{2}}$.
The {homogeneous} interface conditions on $\Gamma$ are as follows:
\begin{align}
	\jump{u}_{\Gamma}&:= u_1 - u_2 =0, \qquad\qquad\qquad\,\,\,\quad \text{on} \, \Gamma, \label{Interface_condition_1}\\
	\jump{\beta \nabla u \cdot \bn}_{\Gamma}&:= \beta_1 \nabla u_1 \cdot \bn - \beta_2 \nabla u_2 \cdot \bn = 0, \quad \text{on}\, \Gamma,\label{Interface_condition_2}
\end{align}
where $\bn$ is the unit outward normal vector on the interface $\Gamma$ pointing from $\Omega_1$ into $\Omega_2$. The interface problems under consideration appear in many applications involving multiply materials and interfaces, such as electrostatic levitation of dust particles~\cite{Interface_application_3}, multi-phase fluid simulation ~\cite{Interface_application_4,Interface_application_5}. 

Many numerical methods were proposed for solving interface problems. According to the relationship between the mesh and the location of interface, these methods can generally be classified into two categories: \textit{fitted finite element methods (FEMs)} and \textit{unfitted FEMs}. In \textit{fitted FEMs}, the interface is consistent with the boundary of the element, so standard FEMs can be directly applied to solve the above interface problems \cite{FEM_1,FEM_2,FEM_3}. However, for interface problems with complex or moving interfaces, the fitted mesh often needs to be regenerated or updated to accurately capture the evolving geometry, which results in considerable computational cost. In contrast, the unfitted mesh allows the interface to cut through the interior of elements and are therefore more suitable for such problems. Existing \textit{unfitted FEMs} mainly include the immersed interface method \cite{IIM2,li2006immersed} and the CutFEM \cite{CutFEM1,CutFEM2}, the partition of unity method \cite{PIU1,PIU2}, the extended finite element method \cite{ExtendedFEM1,ExtendedFEM2}, generalized finite element method \cite{GFEM1,GFEM2,GFEM3,GFEM4} and immersed finite element  (IFE) method \cite{ IFEM_Loworder9,IFEM_Loworder2,IFEM_Loworder5,IFEM1998,IFEM_Loworder4,Partially_penalized_IFEM_2015,IFEM_Loworder6,IFEM_Loworder7}.

In this paper, we mainly focus on the IFE method proposed in \cite{IFEM1998}. This method employs the standard finite element function on the non-interface elements and constructs the IFE functions that satisfy the interface conditions on the interface elements. The number of global degrees of freedom is the same as in the standard finite element method, making this method more attractive and advantageous for solving interface problems on unfitted meshes. The key point of the IFE method is how to construct IFE functions and prove that the constructed IFE functions achieve optimal approximation. Based on the homogeneous interface conditions and the values at the element vertices, the authors in \cite{IFEM2003} derived and solved a local linear system to construct both conforming and nonconforming linear IFE spaces on the interface elements. Instead of employing nodal values to construct the IFE functions, \cite{IFEM2010} developed the nonconforming IFE functions with the average values on the boundary of the element. In \cite{IFE2019}, the authors proposed a unified framework of linear, bilinear, and $Q_1$ type IFE functions and proved the optimal interpolation estimates through the multipoint Taylor expansion method. For high-order IFE methods, in order to avoid the geometric errors caused by approximating curved interfaces with straight segments, many researchers constructed IFE functions based on the actual curved interfaces. In \cite{IFEHigh_2019}, the authors presented IFE functions that weakly satisfy the interface conditions by solving local Cauchy problems on the interface elements. They also proved the optimal approximation properties of these functions. More recently, based on the geometric properties of the interface, \cite{IFEHigh_2024} proposed IFE functions that exactly satisfy the interface conditions, and the corresponding optimal approximation analysis was established. Furthermore, \cite{IFE_2025} further proved the optimal error analysis for the DG method.

For the aforementioned IFE functions, a local problem usually needs to be solved on each interface element to determine the IFE functions. This increases the computational cost. Moreover, when small-cut elements occur, the condition numbers of the local coefficient matrices may become large. As a result, the numerical accuracy may deteriorate, or additional preconditioning techniques may be required to improve the stability of the local solvers. To address this issue, \cite{IFE_2022} presented unified linear explicit IFE functions for both two- and three-dimensional problems based on the standard Crouzeix-Raviart (CR) element. Subsequently, \cite{IFE_2026} proposed linear explicit IFE functions for anisotropic elliptic interface problems with nonhomogeneous interface conditions.

 These existing explicit IFE functions were constructed based on approximate interfaces, which may introduce geometric errors when applied to high-order methods. Thus, in this work, we present linear explicit IFE functions based on the actual interface rather than on an approximate interface. These IFE functions exactly satisfy the interface conditions. Unlike many existing IFE approaches, the proposed IFE functions are constructed in an explicit form and can be obtained directly without solving any auxiliary local problems or local linear systems on interface elements. This avoids additional computational cost associated with local solvers. We establish a rigorous theoretical analysis to show that the proposed IFE functions achieve optimal approximation properties and satisfy the essential trace inequalities. In particular, the constants appearing in the analysis are independent of the interface location. Based on these IFE functions, we further develop an immersed symmetric interior penalty discontinuous Galerkin (SIPDG) method \cite{SIPDG_1982,riviere2008discontinuous}. Following the standard arguments, optimal convergence orders are obtained in both the $H^1$ and $L^2$ norms. Numerical experiments also demonstrate that the condition numbers of the stiffness matrices are uninfluenced by small-cut elements.

This paper is organized as follows. In Section 2, we give the geometric properties of the interface and the inequalities used in the projection estimates. In Section 3, the immersed discontinuous Galerkin method is proposed for solving second-order elliptic interface problems. Section 4 proves the optimal projection estimates of IFE functions. In Sections 5 - 6, the important inequality for the IFE function is established, and optimal error estimates are proved in both the $H^1$ and $L^2$ norms. In Section 7, we implement several numerical examples to investigate the performance of the proposed IFE functions and numerical scheme.

\section{Geometric Properties of The Interface}
In this section, we discuss several geometric properties of the interface that play a fundamental role in the analysis of the IFE function. This section closely follows the presentation in \cite{IFEHigh_2024,IFE2019}. For the reader's convenience and ease of reference in subsequent sections, we reproduce the relevant material here. 

Assume that the interface $\Gamma$ is implicitly defined as the zero set of a non-degenerate level-set function $\ell$:
$$
\Gamma=\{X=(x,y)\in \mathbb{R}^2: \ell(x,y)=0\},
$$
where $\ell(x,y)$ is $C^1$ smooth function in a neighborhood of $\Gamma$, such that 
\begin{align*}
	\ell(x,y) < 0 \quad \text{in} \, \Omega_1, \qquad 	\ell(x,y) > 0 \quad \text{in} \, \Omega_2, \qquad
	|\nabla \ell|\geqslant c_0 >0 \quad \text{in}\, U_{\delta_0}\subset \Omega.
\end{align*}
Here $U_{\delta_0}\subset \Omega$ is a tubular neighborhood of $\Gamma$ of width $\delta_0$: $U_{\delta_0}=\{X=(x,y)\in \mathbb{R}^2: \text{dist}(X,\Gamma)< \delta_0\}$, with $\delta_0>0$ a sufficiently small constant. A special choice for $\ell(x,y)$ is the signed distance function to $\Gamma$. For the subsequent estimate to hold, we assume that the level-set function $\ell$ has the smoothness property {$\ell \in C^3(U_{\delta_0})$}.

Let $T$ be the interface element. For a point $X=(x,y)$ on $\Gamma$, let $\bn(X)=(n_x(X),n_y(X))$ be the unit normal vector of $\Gamma$ at $X$.

\begin{lemma}\cite{IFE2019}
	For the function $u(x,y)$ satisfying the interface conditions \eqref{Interface_condition_1} - \eqref{Interface_condition_2} and the arbitrary point $X$ on $\Gamma \cup T$, we have
	\begin{eqnarray}
		\nabla u_{2}(X)=N_{1}(X)\nabla u_{1}(X), \,N_{1}(X)=\begin{pmatrix}
			n_y^2(X)+\rho n_x^2(X) & (\rho-1)n_x(X)n_y(X)\\
			(\rho-1)n_x(X)n_y(X) & 	n_x^2(X)+\rho n_y^2(X)
		\end{pmatrix},\label{nablau2nablau1}\\
		\nabla u_{1}(X)=N_{2}(X)\nabla u_{2}(X), \,N_{2}(X)=\begin{pmatrix}
			n_y^2(X)+\widetilde{\rho} n_x^2(X) & (\widetilde{\rho}-1)n_x(X)n_y(X)\\
			(\widetilde{\rho}-1)n_x(X)n_y(X) & 	n_x^2(X)+\widetilde{\rho} n_y^2(X)
		\end{pmatrix},\label{nablau1nablau2}
	\end{eqnarray}
where $\rho=\dfrac{\beta_1}{\beta_2}$ and $\widetilde{\rho}=\dfrac{\beta_2}{\beta_1}$.
\end{lemma}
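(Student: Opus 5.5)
Fix $X\in\Gamma$ (the identities are pointwise statements on the interface) and decompose each gradient in the orthonormal frame $\{\bn(X),\mathbf t(X)\}$, where $\mathbf t=(-n_y,n_x)$ is the unit tangent. The two interface conditions then give one relation per component, and the matrices $N_1,N_2$ are just these relations written back in Cartesian coordinates.

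\emph{Tangential component.} Since $u_1-u_2=0$ along $\Gamma$ by \eqref{Interface_condition_1}, differentiating along a $C^1$ parametrization of $\Gamma$ near $X$ (available because $\ell\in C^3$ and $|\nabla\ell|\ge c_0>0$) gives $\nabla u_1(X)\cdot\mathbf t=\nabla u_2(X)\cdot\mathbf t$. This needs $u_i$ to be $C^1$ up to $\Gamma$ from its own side, which I take as the implicit regularity assumption; otherwise one argues with traces and the identity holds a.e.\ on $\Gamma$.

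\emph{Normal component.} By \eqref{Interface_condition_2}, $\nabla u_2\cdot\bn=\rho\,\nabla u_1\cdot\bn$.

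Hence
\[
\nabla u_2=(\nabla u_2\cdot\mathbf t)\mathbf t+(\nabla u_2\cdot\bn)\bn=\left(\mathbf t\mathbf t^T+\rho\,\bn\bn^T\right)\nabla u_1 .
\]
Now expand: $\mathbf t\mathbf t^T=\begin{pmatrix} n_y^2 & -n_xn_y\\ -n_xn_y & n_x^2\end{pmatrix}$ and $\bn\bn^T=\begin{pmatrix} n_x^2 & n_xn_y\\ n_xn_y& n_y^2\end{pmatrix}$. Their combination $\mathbf t\mathbf t^T+\rho\,\bn\bn^T$ is exactly $N_1(X)$ in \eqref{nablau2nablau1}.

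Formula \eqref{nablau1nablau2} follows symmetrically. The normal condition gives $\nabla u_1\cdot\bn=\widetilde\rho\,\nabla u_2\cdot\bn$, so $N_2=\mathbf t\mathbf t^T+\widetilde\rho\,\bn\bn^T$. As a consistency check, $\rho\widetilde\rho=1$ and $\mathbf t\mathbf t^T+\bn\bn^T=I$ give $N_2=N_1^{-1}$.

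The only delicate point is the tangential step. It requires that $\Gamma\cap T$ is a regular $C^1$ curve, so that the unit normal $\bn=\nabla\ell/|\nabla\ell|$ is well defined, and that the one-sided gradients have traces on $\Gamma$. Both follow from the standing assumptions on $\ell$ together with piecewise smoothness of $u$. The rest is algebra.
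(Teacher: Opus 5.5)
Your proof is correct and follows the standard argument behind this lemma. The paper gives no proof of its own and simply cites \cite{IFE2019}, which takes the same route: continuity of $u$ across $\Gamma$ forces equal tangential derivatives, the flux condition scales the normal derivative by $\rho$ (resp.\ $\widetilde{\rho}$), and reassembling in the orthonormal frame $\{\bn,\bm{\tau}\}$ gives $N_1=\bm{\tau}\bm{\tau}^T+\rho\,\bn\bn^T$ and $N_2=\bm{\tau}\bm{\tau}^T+\widetilde{\rho}\,\bn\bn^T$, whose entries you expanded correctly. Your reading of the (evidently mistyped) set ``$\Gamma\cup T$'' as points $X\in\Gamma\cap T$ is also right, and it matches how the lemma is used later at the interface point $M$ in the proof of Lemma~\ref{EstimatesInT2}.
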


\begin{lemma}\cite{IFE2019}
	For the mesh $\mathcal{T}_h$ with $h$ sufficiently small and the arbitrary point $X$ on $\Gamma \cup T$, the follows estimates hold true
	\begin{eqnarray}\label{Estimate_N}
		\|N_1(X)\| \leqslant C, \,\, \|N_2(X)\| \leqslant \frac{C}{\rho},
	\end{eqnarray}
	where the constant $C$ is independent of interface location, the viscosity coefficients $\beta_1$ and $\beta_2$, and the maximum curvature $\kappa$ of the curved interface $\Gamma$.
\end{lemma}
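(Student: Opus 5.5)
The plan is to bound the matrix norms directly, using the spectral structure of $N_1$ and $N_2$ from \eqref{nablau2nablau1}--\eqref{nablau1nablau2}. I take $\|\cdot\|$ to be the matrix norm induced by the Euclidean vector norm. If another norm (Frobenius, max-entry) is intended, the same computation gives the bound up to a factor of at most $2$.

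Fix $X\in\Gamma\cup T$. Write $\bn=(n_x,n_y)$ with $n_x^2+n_y^2=1$, and let $\mathbf t=(-n_y,n_x)$ be the unit tangent. A direct expansion of the entries shows that
\begin{equation*}
N_1(X)=\mathbf t\mathbf t^T+\rho\,\bn\bn^T .
\end{equation*}
For example, the $(1,1)$ entry of the right-hand side is $n_y^2+\rho n_x^2$, and the off-diagonal entry is $-n_xn_y+\rho n_xn_y=(\rho-1)n_xn_y$. In the same way,
\begin{equation*}
N_2(X)=\mathbf t\mathbf t^T+\widetilde\rho\,\bn\bn^T .
\end{equation*}
Thus both matrices are symmetric. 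They share the orthonormal eigenvectors $\mathbf t,\bn$, with eigenvalues $\{1,\rho\}$ and $\{1,\widetilde\rho\}$ respectively.

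Because the matrices are symmetric, the spectral norm equals the largest absolute eigenvalue. Hence $\|N_1(X)\|=\max\{1,\rho\}$ and $\|N_2(X)\|=\max\{1,\widetilde\rho\}$. The standing assumption $\beta_2\geqslant\beta_1>0$ gives $0<\rho\leqslant 1$ and $\widetilde\rho=1/\rho\geqslant1$. It follows that $\|N_1(X)\|=1$ and $\|N_2(X)\|=1/\rho$, so \eqref{Estimate_N} holds with $C=1$.

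This constant involves only the fact that $\bn(X)$ is a unit vector. It is therefore independent of the position of $X$, of how $\Gamma$ cuts $T$, of $\beta_1,\beta_2$ (apart from the explicit factor $1/\rho$), and of the curvature $\kappa$.

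The only point needing care is what $\bn(X)$ means for $X\in T$ off $\Gamma$. There I would take $\bn(X)=\nabla\ell(X)/|\nabla\ell(X)|$. For $h$ sufficiently small, $T\subset U_{\delta_0}$, so $|\nabla\ell|\geqslant c_0>0$ and this normal is well defined and of unit length. This is where the hypothesis "$h$ sufficiently small" enters. Once $\bn$ is defined this way, the computation above applies verbatim. I expect no genuine obstacle beyond this well-definedness check; the substance of the proof is the rank-one decomposition of $N_1$ and $N_2$.
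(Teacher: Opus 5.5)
Your argument is correct. The paper itself gives no proof of this lemma and only cites \cite{IFE2019}, so your self-contained argument is genuinely different from what the paper does. The rank-one decompositions $N_1=\mathbf t\mathbf t^T+\rho\,\bn\bn^T$ and $N_2=\mathbf t\mathbf t^T+\widetilde\rho\,\bn\bn^T$ check out entrywise. They give the sharp statement $\|N_1(X)\|=1$, $\|N_2(X)\|=1/\rho$ in the spectral norm, i.e.\ \eqref{Estimate_N} with $C=1$.

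The citation keeps the paper short and ties the lemma to the framework of \cite{IFE2019}. But it leaves the reader to check hypotheses that, as your computation shows, are essentially superfluous for this bound. The bound is purely algebraic and holds for \emph{any} unit vector $\bn$. Its independence of the interface location and of the curvature $\kappa$ is therefore automatic. The condition ``$h$ sufficiently small'' enters at most to make $\bn=\nabla\ell/|\nabla\ell|$ well defined off $\Gamma$. That this is the normal the paper uses off $\Gamma$ is visible in the rewriting inside \eqref{Trace_proof_4}.

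Your route also shows that the $\beta$-independence of the bound on $\|N_1\|$ rests entirely on the standing assumption $\beta_2\geqslant\beta_1$. Without it one only gets $\max\{1,\rho\}$.

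Finally, ``$\Gamma\cup T$'' in the statement is almost certainly a typo for $\Gamma\cap T$. The estimate is only invoked at the point $M=(x_1,y_1)\in\Gamma$ in the proof of Lemma~\ref{EstimatesInT2}. So your extension of $\bn$ off $\Gamma$ is harmless but not actually needed.
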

 
Next, we assume that the parametrization of the interface $\Gamma$ is as follows:
$$
\bg(\xi)=(g_1(\xi),g_2(\xi)),\, [\xi_s, \xi_e] \rightarrow \Gamma,
$$ 
and is regular in the sense that $\bg'(\xi) \neq 0$ for all $\xi \in [\xi_s, \xi_e]$. The unit tangent vector ${\bm\tau}(\xi)$ at a point $\bg(\xi) \in \Gamma$  is given by 
$$
{\bm\tau}(\xi)=\begin{pmatrix}
	\tau_1(\xi)\\
	\tau_2(\xi)
\end{pmatrix}=\frac{1}{\|\bg'(\xi)\|} \bg'(\xi),
$$
the unit normal vector is denoted by
$$
\bn(\xi)=\begin{pmatrix}
	n_1(\xi)\\
	n_2(\xi)
\end{pmatrix}=Q{\bm\tau}(\xi),\, Q=\begin{pmatrix}
		0 & 1\\
	-1 & 0
\end{pmatrix},
$$
and the signed curvature $\kappa(\xi)$ is defined as
$$
\kappa(\xi)=\frac{1}{\|\bg'(\xi)\|^3}(\bg'(\xi) \wedge \bg''(\xi)),\, \text{where}\, \bv \wedge \bw= \bv^{T} Q \bw.
$$

In the neighborhood of the interface $\Gamma$, there exist a family of curves that are locally parallel to $\Gamma$. Based on the Frenet frame, a curve parallel to $\Gamma$ with an offset distance $\eta$ from $\Gamma$ has the following parametric form \cite{IFEHigh_2024}:
\begin{eqnarray*}
	\bx(\eta, \xi)=\begin{pmatrix}
		x(\eta, \xi)\\
		y(\eta, \xi)
	\end{pmatrix}
=P_{\Gamma}(\eta, \xi) = \bg(\xi)+\eta \bn(\xi),\, \xi \in [\xi_s, \xi_e].
\end{eqnarray*} 
Here $\eta$ measures the signed distance from the interface along the normal direction $\bn(\xi)$. 

We assume that the interface~$\Gamma$ admits a tubular neighborhood 
\[
N_{\Gamma}(\varepsilon)
= P_{\Gamma}([-\varepsilon, \varepsilon] \times [\xi_s, \xi_e]),
\]
with a half-band width $\varepsilon > 0$, such that 
\begin{itemize}
	\item[(i)] for every point $(x(\eta,\xi),y(\eta,\xi)) \in N_\Gamma(\varepsilon)$, 
	the Jacobian $\widehat{D}P_\Gamma(\eta,\xi)$ of the Frenet transformation at $(\eta,\xi)$ is non-singular;
	\item[(ii)] the lines perpendicular to $\Gamma$ passing through any two distinct points on $\Gamma$ 
	do not intersect within this tubular neighborhood $N_\Gamma(\varepsilon)$;
	\item[(iii)] the mapping 
	$P_\Gamma : [-\varepsilon, \varepsilon] \times [\xi_s, \xi_e] \to N_\Gamma(\varepsilon)$ 
	is one-to-one and onto.
\end{itemize}
Since $P_\Gamma $ 
is a one-to-one and onto mapping, it has the inverse mapping 
$R_{\Gamma}: N_{\Gamma}(\varepsilon) \rightarrow [-\varepsilon,\varepsilon] \times [\xi_s, \xi_e]$ such that
$$
\begin{pmatrix}
	\eta(x,y)\\\xi(x,y)
\end{pmatrix}=R_{\Gamma}(x,y)=P^{-1}_{\Gamma}(x,y)\in [-\varepsilon,\varepsilon] \times [\xi_s, \xi_e], \, \, (x,y) \in N_{\Gamma}(\varepsilon).
$$

\begin{figure}
	\centering
	\setlength{\unitlength}{1bp}%
	\begin{picture}(365.01, 181.80)(0,0)
		\put(0,0){\includegraphics{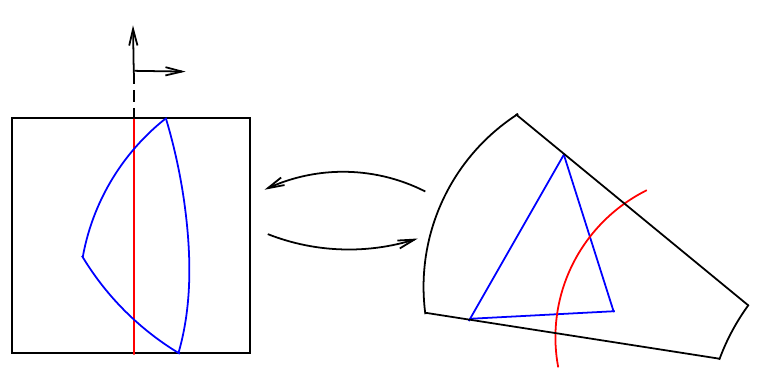}}
		\put(52.97,165.02){\fontsize{14.23}{17.07}\selectfont $\xi$}
		\put(89.39,143.58){\fontsize{14.23}{17.07}\selectfont $\eta$}
		\put(77.93,25.96){\fontsize{14.23}{17.07}\selectfont $\widehat{T}$}
		\put(11.26,108.91){\fontsize{14.23}{17.07}\selectfont $\widehat{T}^1_F$}
		\put(102.38,108.91){\fontsize{14.23}{17.07}\selectfont $\widehat{T}^2_F$}
		\put(102.95,19.40){\fontsize{14.23}{17.07}\selectfont $\widehat{T}_F$}
		\put(67.70,68.49){\fontsize{14.23}{17.07}\selectfont $\widehat{\Gamma}_{T_F}$}
		\put(257.64,52.37){\fontsize{14.23}{17.07}\selectfont $T$}
		\put(297.64,30.37){\fontsize{14.23}{17.07}\selectfont $P_1$}		\put(267.64,112.37){\fontsize{14.23}{17.07}\selectfont $P_2$}		\put(210.64,12.37){\fontsize{14.23}{17.07}\selectfont $P_3$}		
		\put(316.28,90.23){\fontsize{14.23}{17.07}\selectfont $\Gamma$}
		\put(327.79,19.33){\fontsize{14.23}{17.07}\selectfont $T_F$}
		\put(161.08,110.04){\fontsize{14.23}{17.07}\selectfont $R_{\Gamma}$}
		\put(157.41,45.99){\fontsize{14.23}{17.07}\selectfont $P_{\Gamma}$}
	\end{picture}%
	\caption{The transformation between the interface element  $T$  and $\widehat{T}$. }
	\label{inteface_element_reference_transform_IEI}
\end{figure}

For the interface element $T=\triangle P_1P_2P_3$, we construct a fictitious element $T_F$ to cover $T$. For each vertex $P_i$, $1 \leqslant i \leqslant 3$, there exists a unique $\xi_i \in [\xi_s, \xi_e]$ such that 
	\begin{align*}
		\|P_i - \bg(\xi_i)\|={\rm{dist}}(\Gamma, P_i).
	\end{align*}
	Therefore, we get two parameters $a_T$, $b_T \in [\xi_s, \xi_e]$ such that
	\begin{align*}
		a_T=\min(\xi_1,\xi_2,\xi_3), \quad b_T=\max(\xi_1,\xi_2,\xi_3).
	\end{align*}
	Then, we construct a rectangle $\widehat{T}_F=[-h,h] \times [a_T,b_T]$ and {curved trapezoid} $T_F=P_{\Gamma}(\widehat{T}_F)$. Define
	$$
	\widehat{T}_F^{1}=\{(\eta, \xi) \in \widehat{T}_F, \eta <0 \}\quad \text{and} \quad \widehat{T}_F^{2}=\{(\eta, \xi) \in \widehat{T}_F, \eta >0 \}
	$$
	as two subelements of the element $\widehat{T}_F$. Thus, $T_F^1=P_{\Gamma}(\widehat{T}_F^1)$ and $T_F^2=P_{\Gamma}(\widehat{T}_F^2)$. For the interface element $T$, denote 
	$$\widehat{T}=R_{\Gamma}(T), \quad  \widehat{\Gamma}_{T_F}=R_{\Gamma}(\Gamma_{T_F}),$$
	where $\Gamma_{T_F}=\Gamma \cap T_F$. Obviously, for a mesh $\mathcal{T}_h$ with $h$, all the interface elements are inside the tubular neighborhood $N_{\Gamma}(h)$ of $\Gamma$. Hence, we can assume that the mesh size $h$ is sufficiently small such that $N_{\Gamma}(h) \subset N_{\Gamma}(\varepsilon)$, i.e., $h < \varepsilon$.

According to the well-known Frenet-Serret formulas for the tangent and normal vectors, we have
\begin{eqnarray*}
	\tau'(\xi)=-\kappa(\xi)\|\bg'(\xi)\|\bn(\xi),\quad
	\bn'(\xi)=\kappa(\xi)\|\bg'(\xi)\|\tau(\xi).
\end{eqnarray*}
Using the above equations, the Jacobian matrix of the Frenet transformation $P_{\Gamma}(\eta, \xi)$ is defined as
\begin{eqnarray*}
	\widehat{D}P_{\Gamma}(\eta, \xi)=\begin{pmatrix}
		\bn(\xi) & \bg'(\xi)+\eta \bn'(\xi)
	\end{pmatrix}=\begin{pmatrix}
	\bn(\xi) & \|\bg'(\xi)\|(1+\eta \kappa(\xi))\tau(\xi)
\end{pmatrix}.
\end{eqnarray*}
By the inverse function theorem, the Jacobian of the inverse mapping $R_{\Gamma}(x,y)$ has the following form:
\begin{eqnarray*}
	DR_{\Gamma}(x,y)=\begin{pmatrix}
		\bn(\xi)^T \\
		\|\bg'(\xi)\|^{-1} \psi(\eta,\xi)\tau(\xi)^T
	\end{pmatrix} \quad \text{with} \quad \psi(\eta,\xi)=(1+\eta \kappa(\xi))^{-1}, \quad \forall \, (x,y)\in N_{\Gamma}(\varepsilon).
\end{eqnarray*}

Assume that $\bg(\xi)$ is a regular $C^3([\xi_s, \xi_e],\Gamma)$ parametrization of $\Gamma$. This implies that $\|\bg'(\xi)\| \simeq 1$ and $\kappa(\xi) \in  C^1([\xi_s, \xi_e],\mathbb{R})$. Furthermore, we assume that $h \kappa_{\Gamma} \leqslant \dfrac{1}{2}$, where $\kappa_{\Gamma} =\max_{\xi \in [\xi_s, \xi_e]} |\kappa(\xi)|$, this ensures that $\psi \simeq |\widehat{D}P_{\Gamma}| \simeq |DR_{\Gamma}|\simeq 1$. The notation $a \simeq b$ is the equivalence relation $a \leqslant Cb$ and $b \leqslant Ca$ where $C$ is independent of the mesh size, the relative position of the interface and the viscosity coefficients $\beta_1$ and $\beta_2$.  

\section{The Numerical Scheme}
For the measurable domain $D \subset \mathbb{R}^2$, let $W^{k,p}(D)$ be the standard Sobolev spaces on $D$ associated with the norm $\|\cdot\|_{k,p,{D}}$. The corresponding Hilbert space is $H^k({D})=W^{k,2}({D})$ equipped with the norm $\|\cdot\|_{k,{D}}$. When $D = \Omega$ and $k=0$, we omit the subscript in the norm for simplicity.
Let
\begin{eqnarray*}
	PH^k({\Omega})=\{u: u|_{{\Omega}_s} \in H^k({\Omega}_s),\, s=1,2;\, \jump{u}_{\Gamma}=0 \,\, \text{and}\,\, \jump{\beta \nabla u \cdot \bn}_{\Gamma}=0 
	\}.
\end{eqnarray*}
The norm equipped with $PH^k({\Omega})$ is
$$
\|\cdot\|_{k,{\Omega}}^2=\|\cdot\|_{k,{\Omega}_1}^2+\|\cdot\|_{k,{\Omega}_2}^2.
$$

Let $\mathcal{T}_h$ be the shape-regular triangulation \cite{WGStokes1} of the domain $\Omega$, which is not necessarily aligned with the interface. Denote by $\mathcal{T}_h^n$ the set of non-interface elements and by $\mathcal{T}_h^I$ the set of interface elements, i.e., those elements intersected by $\Gamma$.  For
$T \in \mathcal{T}_h$, define the area and diameter of $T$ as $|T|$ and $h_T$, respectively. Set $h = \max_{T \in \mathcal{T}_h} h_T$. Denote by $\mathcal{E}_h$ the set of all edges in $\mathcal{T}_h$. Let $\mathcal{E}_h^b$ be the set of all edges lying on the boundary $\partial \Omega$. For $k \geqslant 1$, denote by $P_k(T)$ the space of polynomials of degree at most $k$ on $T$.

In addition to the standard shape-regular conditions, the interface elements are required to satisfy the following conditions:
\begin{itemize}
	\item refine the partition to satisfy $h_T < \delta_0$ such that $\overline{T} \subset U_{\delta_0}$ for all $ T \in \mathcal{T}_h^I$;
	\item the interface intersects the boundary of the element at exactly two points, and these two intersection points cannot lie on the same edge, including the two endpoints of that edge.
\end{itemize}

\begin{figure}
	\centering
	\ifpdf
	\setlength{\unitlength}{1bp}%
	  \begin{picture}(129.32, 121.53)(0,0)
		\put(0,0){\includegraphics{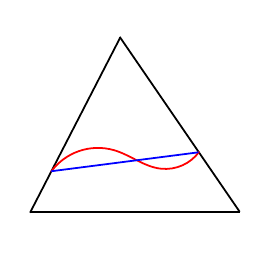}}
		\put(-1,16.92){\fontsize{14.23}{17.07}\selectfont $A$}
		\put(116.56,16.36){\fontsize{14.23}{17.07}\selectfont $B$}
		\put(47.91,107.75){\fontsize{14.23}{17.07}\selectfont $C$}
		\put(10.91,37.75){\fontsize{14.23}{17.07}\selectfont $D$}
		\put(97.91,50.75){\fontsize{14.23}{17.07}\selectfont $E$}
		\put(82.84,74.81){\fontsize{14.23}{17.07}\selectfont $e_2$}
		\put(111.05,38.24){\fontsize{14.23}{17.07}\selectfont $e_1$}
     	\put(52.84,64.81){\fontsize{14.23}{17.07}\selectfont $T_2$}
		\put(52.05,28.24){\fontsize{14.23}{17.07}\selectfont $T_1$}
		\put(60.90,8.73){\fontsize{14.23}{17.07}\selectfont $e$}
	\end{picture}%
	\caption{{The interface element.}}
	\label{The_interface_element}
\end{figure}
For the interface element $T$, set $T_1 = T \cap \Omega_1$ and $T_2 = T \cap \Omega_2$ while $\widetilde{T}_1$ and $\widetilde{T}_2$ are the quadrilateral $ABED$ and straight-edge triangle $CDE$, respectively.

\begin{lemma}\cite{IFE_2025}\label{Norm_equivalence}
	For the interface element $T$ as shown in Figure \ref{The_interface_element}, if $|C D | \geqslant \dfrac{1}{2}|AC|$ and $|CE | \geqslant  \dfrac{1}{2}|BC|$, there holds
		\begin{align}
		\|\cdot\|_{T_2} \simeq 	\|\cdot\|_{\widetilde{T}_2} \simeq \|\cdot\|_{T}, \quad {\rm{on}} \, P_1(T).
	\end{align}
	Otherwise, if  $|CD | \leqslant \dfrac{1}{2}|AC|$ or $|CE| \leqslant \dfrac{1}{2}|BC|$, then there holds 
	\begin{align}
	\|\cdot\|_{T_1} \simeq 	\|\cdot\|_{\widetilde{T}_1} \simeq \|\cdot\|_{T}, \quad {\rm{on}} \, P_1(T).
\end{align}
\end{lemma}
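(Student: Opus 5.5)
The plan is to prove the equivalences in two stages. First compare the straight-edge piece $\widetilde{T}_s$ with $T$, which is pure polynomial geometry. Then compare the curved piece $T_s$ with $\widetilde{T}_s$, using smallness of the curved region between the chord $DE$ and $\Gamma$.

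Throughout, the upper bounds $\|v\|_{T_s}\leqslant\|v\|_T$ and $\|v\|_{\widetilde T_s}\leqslant \|v\|_T$ are trivial because both pieces are subsets of $T$. So only the lower bounds $\|v\|_T\leqslant C\|v\|_{\widetilde T_s}$ and $\|v\|_{\widetilde T_s}\leqslant C\|v\|_{T_s}$ need work, with $C$ independent of where $D,E$ lie.

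\textbf{Step 1: $\|v\|_T\lesssim\|v\|_{\widetilde T_s}$ on $P_1(T)$.}

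In the first case, $|CD|\geqslant\frac12|AC|$ and $|CE|\geqslant\frac12|BC|$. Then the triangle $CDE$ contains the triangle $CD'E'$, where $D',E'$ are the midpoints of $AC,BC$. The triangle $CD'E'$ is homothetic to $T$ with ratio $1/2$. Mapping $T$ to a reference triangle, and by equivalence of norms on the finite-dimensional space $P_1$, we get $\|v\|_{T}\leqslant C\|v\|_{CD'E'}\leqslant C\|v\|_{\widetilde T_2}$. The constant $C$ depends only on the ratio $1/2$ and is fixed.

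In the second case, say $|CD|\leqslant\frac12|AC|$. Then $|AD|\geqslant\frac12|AC|$, and since $E$ lies on $BC$, the quadrilateral $ABED$ contains the triangle $ABD$. The triangle $ABD$ contains the triangle $ABD'$ with $D'$ the midpoint of $AC$, and $ABD'$ has area $|T|/2$ and is shape-regular relative to $T$. The same reference-element/finite-dimensional argument then gives $\|v\|_T\lesssim\|v\|_{\widetilde T_1}$. The subcase $|CE|\leqslant\frac12|BC|$ is symmetric, using the triangle $ABE'$.

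\textbf{Step 2: $\|v\|_{\widetilde T_s}\lesssim\|v\|_{T_s}$, i.e.\ the curved perturbation is negligible.}

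The symmetric difference $T_s\,\triangle\,\widetilde T_s$ is the region $R$ between the chord $DE$ and $\Gamma$. Since $\Gamma\in C^2$ with bounded curvature $\kappa_\Gamma$, the standard chord estimate gives $\mathrm{dist}(\Gamma\cap T, DE)\leqslant C\kappa_\Gamma h_T^2$, hence $|R|\leqslant C h_T^3$.

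For $v\in P_1(T)$ we have the inverse estimate $\|v\|_{\infty,T}\leqslant C|T|^{-1/2}\|v\|_T$. Therefore
\[
\|v\|_R^2\leqslant |R|\,\|v\|_{\infty,T}^2\leqslant C h_T^3 h_T^{-2}\|v\|_T^2=Ch_T\|v\|_T^2 .
\]
Combining with Step 1,
\[
\|v\|_{\widetilde T_s}^2\leqslant \|v\|_{T_s}^2+\|v\|_R^2\leqslant \|v\|_{T_s}^2+Ch_T\|v\|_T^2\leqslant \|v\|_{T_s}^2+C'h_T\|v\|_{\widetilde T_s}^2 .
\]
For $h$ sufficiently small that $C'h\leqslant\frac12$, the last term is absorbed into the left side, giving $\|v\|_{\widetilde T_s}\lesssim\|v\|_{T_s}$. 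Running the same bound in the other direction gives $\|v\|_{T_s}\lesssim\|v\|_{\widetilde T_s}$ as well.

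The main obstacle is Step 2: making the curvature estimate $|R|\lesssim\kappa_\Gamma h_T^3$ uniform regardless of how $\Gamma$ cuts $T$. I would handle it via the Frenet parametrization, using $h\kappa_\Gamma\leqslant\frac12$ and the standing assumption that $\Gamma$ meets $\partial T$ exactly twice and not on the same edge. The only place $h$ small is used is the absorption at the end of Step 2.
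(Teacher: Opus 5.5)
Your proof is correct. The paper gives no proof of this lemma; it quotes it from \cite{IFE_2025} and uses it as a black box. So there is no in-text argument to compare against, and your proof stands as a self-contained replacement for the citation.

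Step 1 is sound. Each straight piece contains the affine image of a fixed half-size reference subtriangle ($CD'E'$, or $ABD'$ resp.\ $ABE'$). Since the ratio $\|v\|_T/\|v\|_{\cdot}$ on $P_1$ is affine invariant, this gives a universal constant. Step 2 is also sound. It converts the $O(\kappa_\Gamma h_T^3)$ area of $T_s\,\triangle\,\widetilde T_s$ into a relative perturbation of size $O(\kappa_\Gamma h_T)$ via $\|v\|_{\infty,T}\le C|T|^{-1/2}\|v\|_T$, and the chain $\|v\|_T\lesssim\|v\|_{\widetilde T_s}\lesssim\|v\|_{T_s}\le\|v\|_T$ closes.

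What your route makes explicit, and the citation does not, is twofold:
\begin{itemize}
\item the constants are independent of where $D$ and $E$ sit;
\item the only geometric price is a condition of the form $C\kappa_\Gamma h\le\frac12$, with $C$ depending on shape regularity.
\end{itemize}
That condition is possibly stronger than the paper's stated $h\kappa_\Gamma\le\frac12$, but it is covered by the standing ``$h$ sufficiently small''. Strictly speaking, smallness of $h$ is used inside the chord estimate as well, not only in the absorption step.

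For the step you flag as the main obstacle, you can avoid any discussion of the exact shape of $R$, for instance whether the arc crosses the chord.
\begin{itemize}
\item Let $\delta$ be the largest distance from a point of $\Gamma\cap\overline T$ to the line through $D$ and $E$.
\item On either side of this line, the points of $T$ at distance greater than $\delta$ form a convex set disjoint from $\Gamma$. Hence the level-set function $\ell$ has one sign on that set.
\item From any such point, move perpendicularly away from the line until you hit $\partial T$. You land on the part of $\partial T\setminus\{D,E\}$ on that side, namely $(D,C]\cup[C,E)$ or the polygonal path through $A$ and $B$. This fixes the sign, because $\Gamma\cap\partial T=\{D,E\}$.
\end{itemize}
Consequently $T_s\,\triangle\,\widetilde T_s$ lies in a strip of width $2\delta$ about the line, so $|R|\le 2\delta h_T$. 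Everything then reduces to $\delta\le C\kappa_\Gamma h_T^2$. This follows by writing $\Gamma$ near $D$ as a graph over its tangent line with second derivative $O(\kappa_\Gamma)$, which is legitimate since $h<\varepsilon$ and $h\kappa_\Gamma$ is small.

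The same strip argument also gives a variant without absorption. Your half-size triangle minus the strip is a convex subset of $T_s$ with area $\gtrsim h_T^2$ and diameter $\le h_T$, so it contains a ball of radius $\gtrsim h_T$. Norm equivalence for $P_1$ on that ball then yields $\|v\|_T\lesssim\|v\|_{T_s}$ directly.
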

Next, we define the IFE functions on the interface element $T$. If $|CD | \geqslant \dfrac{1}{2}|AC|$ and $|CE | \geqslant \dfrac{1}{2}|BC|$, the definition of the IFE function $\varphi_i(x,y),\,i=1,2,3$ is given by
\begin{align}\label{IFE_function1}
	\varphi_i(x,y)=\left\{\begin{array}{lc}
			\varphi_2(x,y)=p_i(x,y) \in P_1(T_2), &\, \text{in}\, T_2\\				\varphi_{1}=\mathcal{E}^1(\varphi_{2})=\varphi_{2}-\left(1-\dfrac{\beta_2}{\beta_1}\right)\dfrac{\ell(x,y)}{\sqrt{\ell_x^2+\ell_y^2}}\nabla \varphi_{2} \cdot \bn, & \, \text{in}\, T_1
	\end{array}\right..
\end{align}
Otherwise, the IFE function is denoted by 
\begin{align}\label{IFE_function2}
	\varphi_i(x,y)=\left\{\begin{array}{lc}
		\varphi_1(x,y)=p_i(x,y) \in P_1(T_1), &\, \text{in}\, T_1\\				\varphi_{2}=\mathcal{E}^2(\varphi_{1})=\varphi_{1}-\left(1-\dfrac{\beta_1}{\beta_2}\right)\dfrac{\ell(x,y)}{\sqrt{\ell_x^2+\ell_y^2}}\nabla \varphi_{1} \cdot \bn, & \, \text{in}\, T_2
	\end{array}\right..
\end{align}
Here, we choose a set of basis functions of $P_1(T)$ as $\{p_i(x,y)\}_{i=1}^3$. It is easy to verify that the above function precisely satisfies the interface conditions \eqref{Interface_condition_1} - \eqref{Interface_condition_2}. The IFE space on an interface element $T$ is then defined as follows:
\begin{align*}
	\mathcal{V}_1(T)={\rm{span}}\{ \varphi_i(x,y),\, i=1,2,3\}.
\end{align*}
Thus, we define the following discontinuous finite element space on $\mathcal{T}_h^n$ and $\mathcal{T}_h^I$, respectively,
\begin{align*}
	V_h^n=&\{ v_h \in L^2(\Omega): v_h|_T\in P_1(T), \,  T \in \mathcal{T}_h^n\},\\
		V_h^I=&\{ v_h \in  L^2(\Omega): v_h|_T\in \mathcal{V}_1, \, T \in \mathcal{T}_h^I\}.
\end{align*}
Combining the space $V_h^n$ and $V_h^I$, the global finite element space is defined as
\begin{align*}
	V_{h} &=\{  v_h: v_h|_{\mathcal{T}_h^n} \in V_h^n, v_h|_{\mathcal{T}_h^I} \in V_h^I \},\\
	V_{h}^0 &= \{ v_h: v_h \in V_{h}, v_h|_e =0 \,  \text{on} \, e \in \mathcal{E}_h^b \}.
\end{align*}
We note that the functions in $V_h$ are discontinuous across the edge of the partition $\mathcal{T}_h$. Let $T_1$ and $T_2$ be two neighboring elements sharing a common edge $e$. We define the average and jump of a function $v$ on $e$ as follows:
\begin{align*}
	\{v\}_e=\dfrac{1}{2}(v|_{T_1}+v|_{T_2}),\quad\quad \jump{v}_e=v|_{T_1}-v|_{T_2}.
\end{align*}
If $e \subset \partial \Omega$, then the above definition is modified as 
\begin{align*}
	\{v\}_e=\jump{v}_e=v|_{T_1},\quad e=\partial T_1 \cap \partial \Omega.
\end{align*}

 For $T \in \mathcal{T}_h^n$, denote by $I_h$ the $L^2$ projection operator from $L^2(T)$ into $P_1(T)$. For the interface element $T \in \mathcal{T}_h^I$, let $P_h $ be the $L^2$ projection operator from $L^2(T)$ into $	\mathcal{V}_1(T)$.
 Let  $\Pi_h $ be the $L^2$ projection operator from $L^2(\Omega)$ into $V_h$ and satisfy the following definition:
 \begin{align*}
 	\Pi_h u=\left\{\begin{array}{cl}
 		I_hu,& T\in \mathcal{T}_h^n \\
 		P_h u,& T\in \mathcal{T}_h^I
 	\end{array}
 	\right..
 \end{align*}  
 Denote by $\bn_e$ the normal vector on the edge $e$. Based on the above definitions, we propose the following numerical scheme.

\begin{algorithm}[H]
	\caption{The Immersed SIPDG Scheme}
Find $u_h \in V_{h}$ and $u_h = \Pi_h g$ on $\partial \Omega$ to satisfy
	\begin{eqnarray}
		a_h(u_h, v_h)= L_h(v_h), \quad  \forall \, v_h \in V_{h}^0, \label{DGscheme}
	\end{eqnarray}
	where
\begin{align*}
	a_h(u,v)&=\sum_{T \in \mathcal{T}_h} (\beta \nabla u, \nabla v)_T-\sum_{e \in \mathcal{E}_h}\left( 
	\langle \jump{\beta \nabla u \cdot \bn_e}_e, \{v\}_e \rangle_{e}+\langle \{u\}_e,\jump{\beta \nabla v \cdot \bn_e}_e \rangle_e - \frac{ \sigma_0\gamma}{h} \langle \jump{u}_e, \jump{v}_e \rangle_e
	\right),\\
	 L_h(v_h)&=\sum_{T \in \mathcal{T}_h} (f,v)_T+\sum_{e \in \mathcal{E}_h^b}\langle -\beta \nabla v \cdot \bn_e+ \frac{\sigma_0\gamma}{h}v,g \rangle_e.
\end{align*}
\end{algorithm} 
Here, $\gamma=\dfrac{\beta_2^2}{\beta_1}$ and $\sigma_0 >0$ is a constant independent of the mesh size $h$,  the coefficients $\beta_1$ and $\beta_2$.

\section{The Approximation Capabilities of The IFE Space}
In this section, we investigate the approximation properties of the proposed IFE functions by taking the IFE function \eqref{IFE_function1} as an example. Let $\Pi_h^s: L^2(T_s) \rightarrow P_1(T_s),s= 1,2$ be the $L^2$ projection operator. That is, for $u_s \in H^2(\Omega_s)$, the projection $\Pi_h^s u_s$ satisfies the following equation:
\begin{eqnarray*}
	(\Pi_h^s u_s, v_h)_{T_s}=(u_s, v_h)_{T_s}, \quad \forall\, v_h \in P_1(T_s).
\end{eqnarray*}

\begin{lemma}\cite{Ciarlet_book_1978}\label{projection_error_in_Ti}
	Let $u_s \in H^2(\Omega_s)$, $s={1,2}$. Then for all integers $0\leqslant m \leqslant 2$, we have 
	\begin{eqnarray}
		\sum_{T \in \mathcal{T}_h^I} \left\|\Pi_h^s u_s- u_s \right\|_{m,T_F^s} \leqslant C h^{2-m}\|u_s\|_{2,\Omega_s}.
	\end{eqnarray}
\end{lemma}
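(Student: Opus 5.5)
The plan is to prove a per-element estimate on each fictitious element $T_F^s$ for the projection $\Pi_h^s$ exactly as defined (the $L^2(T_s)$-projection onto $P_1(T_s)$), and then sum over $\mathcal{T}_h^I$. Two preliminary points come first. The polynomial $\Pi_h^s u_s$ extends canonically from $T_s$ to all of $\mathbb{R}^2$, so $\|\Pi_h^s u_s - u_s\|_{m,T_F^s}$ makes sense. Since $\bn$ points from $\Omega_1$ into $\Omega_2$ and $h<\varepsilon$, the set $T_F^1=P_\Gamma(\widehat T_F^1)$ lies in $\Omega_1$ and $T_F^2$ lies in $\Omega_2$. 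Hence $u_s$ is defined on $T_F^s$ and no extension of $u_s$ is needed.

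\emph{Step 1 (geometry of $T_F^s$).} I will check that $\mathrm{diam}(T_F^s)\le Ch$ and that $T_s\subset T_F^s$. Because $h\kappa_\Gamma\le \tfrac12$, the Jacobians satisfy $|\widehat D P_\Gamma|\simeq|DR_\Gamma|\simeq 1$. Together with shape regularity this gives $b_T-a_T\le Ch$, and $T_F^s$ is bi-Lipschitz equivalent, with constants independent of the cut, to the rectangle $[0,h]\times[a_T,b_T]$, which has aspect ratio bounded above and below. Consequently $T_F^s$ is star-shaped with respect to a ball of radius $\simeq h$, so the Bramble--Hilbert lemma applies on $T_F^s$ with a constant independent of the interface location.

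\emph{Step 2 (per-element bound).} Let $q\in P_1$ be the averaged Taylor polynomial of $u_s$ on $T_F^s$. Bramble--Hilbert gives
\[
\|u_s-q\|_{m,T_F^s}\le Ch^{2-m}|u_s|_{2,T_F^s}.
\]
Since $\Pi_h^s q=q$, we can write $\Pi_h^s u_s-q=\Pi_h^s(u_s-q)$. The main difficulty is that $T_s$ may be a tiny sliver, so $L^2(T_s)$-stability of $\Pi_h^s$ does not transfer to $T_F^s$. To handle this, I use Lemma \ref{Norm_equivalence}: on the side where the norm equivalence holds, $\|p\|_{T_s}\simeq\|p\|_{T}$ for $p\in P_1$. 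Combined with the polynomial inverse inequality on the comparable sets $T$ and $T_F^s$, this yields
\[
\|\Pi_h^s(u_s-q)\|_{m,T_F^s}\le Ch^{-m}\|\Pi_h^s(u_s-q)\|_{T_s}\le Ch^{-m}\|u_s-q\|_{T_F^s}\le Ch^{2-m}|u_s|_{2,T_F^s}.
\]
For the other side, where $T_s$ may be small, this stability step is the one I expect to be the genuine obstacle. There I would first try the same argument with $T_s$ replaced by $\widetilde T_s$ and the equivalence constant from Lemma \ref{Norm_equivalence}. If that fails, I would restrict the lemma to the side on which the IFE function \eqref{IFE_function1} or \eqref{IFE_function2} uses $\Pi_h^s$ as its polynomial part, since that is exactly the side covered by the lemma.

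\emph{Step 3 (summation).} The sets $T_F^s$ for $T\in\mathcal{T}_h^I$ lie in the band $P_\Gamma([-h,h]\times[\xi_s,\xi_e])$. Each point of this band is covered by a uniformly bounded number of them, because the parameter intervals $[a_T,b_T]$ have length $\simeq h$ and shape regularity limits the number of elements meeting any $\xi$-interval of that length. Summing the per-element bounds therefore gives
\[
\sum_{T\in\mathcal{T}_h^I}\|\Pi_h^su_s-u_s\|_{m,T_F^s}^2\le Ch^{2(2-m)}\|u_s\|_{2,\Omega_s}^2.
\]

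This square-summed bound is the form in which the estimate is combined with the other per-element terms in the subsequent analysis. However, it does not imply the unsquared sum in the statement. Cauchy--Schwarz over the roughly $h^{-1}$ interface elements would lose a factor $h^{-1/2}$. I do not expect a clean way to avoid this loss for $u_s\in H^2$ alone, so Step 3 is where the plan is incomplete for the statement as worded. It would have to rely on stronger local regularity, for example $W^{2,p}$ with $p>2$, or else the statement should be read as the $\ell^2$ sum.
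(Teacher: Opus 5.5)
The paper gives no argument for this lemma beyond the citation to Ciarlet. It treats the result as the textbook elementwise projection estimate, so the real question is whether the two gaps you flag can be closed. Your Step 2 is correct on the side where Lemma \ref{Norm_equivalence} applies. Neither gap can be closed for the statement as written, however, and the repairs are different from your fallbacks.

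\emph{Summation.} The unsquared inequality is not just out of reach of Cauchy--Schwarz; it is false.
\begin{itemize}
\item On a quasi-uniform mesh take, for $s=1$, $u_1=h^{3/2}\phi(\eta/h)$ with $\phi\in C_c^\infty(\mathbb{R})$ not affine on $[-1,0]$. Then $\|u_1\|_{2,\Omega_1}\lesssim 1$.
\item Every $p\in P_1$ satisfies $p\circ P_\Gamma(\eta,\xi)=p(\bg(\xi))+\eta\,\nabla p\cdot\bn(\xi)$, which is affine in $\eta$ for each fixed $\xi$. Hence $\inf_{p\in P_1}\|u_1-p\|_{T_F^1}\gtrsim h^{5/2}$ on each of the $\simeq h^{-1}$ interface elements.
\item So the left-hand side for $m=0$ is $\gtrsim h^{3/2}\gg h^2$, and for $m=2$ it is even $\gtrsim h^{-1/2}$.
\end{itemize}
The paper only uses the estimate in squared form: the proof of Lemma \ref{EstimatesInT2} sums $\|\Pi_h^su_s-u_s\|_{T_F^s}^2+h_T^2\|\nabla(\Pi_h^su_s-u_s)\|_{T_F^s}^2$. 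The $\ell^2$ version is therefore the right target, and your Steps 1 and 3 are the right way to sum. One caveat: your aspect-ratio claim in Step 1 needs $h_T\simeq h$, because $\widehat T_F$ uses the global $h$ in $\eta$ while $b_T-a_T\simeq h_T$.

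\emph{Stability on the small side.} Neither fallback works.
\begin{itemize}
\item Replacing $T_s$ by $\widetilde T_s$ gains nothing. Lemma \ref{Norm_equivalence} gives equivalence only on one side, and $\widetilde T_s$ is as small as $T_s$, so no cut-independent bound $\|p\|_{T_F^s}\lesssim\|p\|_{\widetilde T_s}$ exists.
\item Worse, the estimate genuinely fails for the literal $L^2(T_s)$-projection. Let $T_2$ be a corner of diameter $\delta\ll h$ at the vertex $C$. Put $u_2=\big((X-C)\cdot\bn_C\big)\chi(|X-C|)$, where $\bn_C$ is the normal at the foot point of $C$ and $\chi(r)=\log\big(h/\max(r,2\delta)\big)$, smoothed near $r=2\delta$ and cut off at $r\simeq h$.
\item Then $u_2$ is linear on $T_2$, so $\Pi_h^2u_2=\log(h/2\delta)\,(X-C)\cdot\bn_C$. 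This gives $\|\Pi_h^2u_2-u_2\|_{T_F^2}\gtrsim h^2\log(h/\delta)$, whereas $\|u_2\|_{2,\Omega_2}\lesssim\sqrt{\log(h/\delta)}$.
\item Restricting the lemma to the side that carries the IFE polynomial part does not help either, because Lemma \ref{EstimatesInT2} needs the bound for both $s=1$ and $s=2$.
\end{itemize}
The missing idea is to change the auxiliary operator. Let $\Pi_h^s$ be the $L^2(T_F^s)$-projection onto $P_1$, or the averaged Taylor polynomial on $T_F^s$. Your Step 2 then reduces to Bramble--Hilbert on $T_F^s$, with no need for Lemma \ref{Norm_equivalence}. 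Nothing downstream breaks. In Lemma \ref{EstimatesInT2} and Theorem \ref{Projection_estimate_global}, $\Pi_h^su_s$ serves only as a linear comparison polynomial, and $(\mathcal{E}^1(p),p)\in\mathcal{V}_1(T)$ for every $p\in P_1$.
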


\begin{lemma}\label{EstimatesInT2}
	For the interface element $T \in \mathcal{T}_h^I$ and $u \in PH^2(\Omega)$, then we have 
	\begin{eqnarray}\label{EstimatesInT2_1}
		\sum_{T \in \mathcal{T}_h^I}\|\Pi_h^1 u_1 - \mathcal{E}^1(\Pi_h^2 u_2)\|_{m, T \cap \Omega_{1}} \leqslant C \left(\dfrac{\beta_2}{\beta_1}\right) h^{2-m}\|u\|_{2,\Omega}.
	\end{eqnarray}
\end{lemma}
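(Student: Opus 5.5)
The plan is to compare $\Pi_h^1 u_1 - \mathcal{E}^1(\Pi_h^2 u_2)$ with the exact continuation $u_1 - \mathcal{E}^1(u_2)$, and then show that the latter is small because $u$ satisfies the interface conditions. Write
\begin{align*}
\Pi_h^1 u_1 - \mathcal{E}^1(\Pi_h^2 u_2) = (\Pi_h^1 u_1 - u_1) + \bigl(u_1 - \mathcal{E}^1(u_2)\bigr) + \mathcal{E}^1(u_2 - \Pi_h^2 u_2),
\end{align*}
where $u_1,u_2$ are understood as Sobolev extensions $E_s u_s \in H^2$ to a neighborhood (so they live on all of $T_F$), with $\|E_s u_s\|_{2}\le C\|u_s\|_{2,\Omega_s}$. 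The map $\mathcal{E}^1(w)=w-(1-\beta_2/\beta_1)\,\phi\,\nabla w\cdot\bn$, with $\phi=\ell/|\nabla\ell|$, is linear. The first term is controlled directly by Lemma \ref{projection_error_in_Ti} on $T_F^1\supset T\cap\Omega_1$.

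For the third term, set $w=u_2-\Pi_h^2u_2$. Here $\bn$ is the normal field extended via $\nabla\ell/|\nabla\ell|$, or via the Frenet coordinates. Since $\ell\in C^3$ and $|\nabla\ell|\ge c_0$, the quantities $\phi$ and $\bn$ are $C^2$ on $U_{\delta_0}$. In addition, $|\phi|\le C\,\mathrm{dist}(\cdot,\Gamma)\le Ch$ on $T$, with $|\nabla\phi|\le C$ and $|\nabla^2\phi|\le C$. A product-rule computation then gives
\begin{align*}
|\mathcal{E}^1 w|_{m,T_1}\le C\frac{\beta_2}{\beta_1}\bigl(\|w\|_{m,T_F}+h\|w\|_{m+1,T_F}+\textstyle\sum_{j<m}\|w\|_{j+1,T_F}\bigr).
\end{align*}
The extra derivative falling on $w$ is compensated by the factor $h$ from $\phi$. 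For $m=2$ the term $\|w\|_{3}$ does not appear, because $\phi\nabla w\cdot\bn$ needs only two derivatives of $w$ together with derivatives of $\phi$ and $\bn$, and $w$ is in $H^2$. Lemma \ref{projection_error_in_Ti} then yields $C(\beta_2/\beta_1)h^{2-m}\|u_2\|_{2}$. The factor $1-\beta_2/\beta_1$ is bounded by $\beta_2/\beta_1$, which is where the constant in the statement comes from.

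The main obstacle is the middle term $r:=u_1-\mathcal{E}^1(u_2)$ on $T\cap\Omega_1$. I would work in Frenet coordinates, where $\phi=\eta\,\theta(\eta,\xi)$ with $\theta$ smooth and $\theta=1$ on $\Gamma$, since $\partial_\eta\ell=|\nabla\ell|$ there. On $\Gamma$ ($\eta=0$), $r=u_1-u_2=0$ by \eqref{Interface_condition_1}. Its normal derivative is
\begin{align*}
\nabla u_1\cdot\bn-\nabla u_2\cdot\bn+(1-\beta_2/\beta_1)\nabla u_2\cdot\bn=\nabla u_1\cdot\bn-\tfrac{\beta_2}{\beta_1}\nabla u_2\cdot\bn,
\end{align*}
which vanishes by \eqref{Interface_condition_2}. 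Hence $r$ and $\nabla r$ vanish on $\Gamma$ (the tangential derivative vanishes too). Since $r\in H^2$ on $T_F^1$ with $\|r\|_{2,T_F^1}\le C(\beta_2/\beta_1)\|u\|_{2,\Omega}$ by the same product-rule bound, a one-dimensional Taylor/Hardy argument in $\eta$ along the normal lines, integrating $\partial_\eta^2$ from $\eta=0$ over strips of width $h$ and using $|DR_\Gamma|\simeq 1$, gives $\|r\|_{m,T_F^1}\le Ch^{2-m}|r|_{2,T_F^1}$ for $m=0,1$. For $m=2$ the trivial bound suffices.

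The care needed here concerns regularity: $\nabla u_2\cdot\bn$ has only an $H^{1/2}$ trace, which is enough for the traces to make sense. It also requires that the estimate be summed over interface elements, whose strips $T_F^1$ overlap only a bounded number of times. Summing the three contributions over $T\in\mathcal{T}_h^I$ then gives \eqref{EstimatesInT2_1}.
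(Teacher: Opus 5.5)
Your three-term splitting is algebraically correct, but the step that carries the argument fails: for $u\in PH^2(\Omega)$ the middle term $r=u_1-\mathcal{E}^1(u_2)$ is in general not in $H^2(T_F^1)$. The Hessian of the correction $\phi\,\nabla u_2\cdot\bn$ contains $\phi\,\nabla^2(\nabla u_2\cdot\bn)$, i.e.\ third derivatives of $u_2$. Since $\phi\neq 0$ off $\Gamma$ and $u_2$ (and its extension) is only $H^2$, this term is not in $L^2$. Your remark that $\phi\nabla w\cdot\bn$ needs only two derivatives of $w$ holds for one derivative of that product, not for two. Consequently:
\begin{itemize}
\item the bound $\|r\|_{2,T_F^1}\le C(\beta_2/\beta_1)\|u\|_{2,\Omega}$ is false;
\item the Hardy estimate $\|r\|_{m,T_F^1}\le Ch^{2-m}|r|_{2,T_F^1}$ generally has an infinite right-hand side;
\item your $m=2$ bound for $\mathcal{E}^1 w$ also needs $\|w\|_3$.
\end{itemize}
The third derivatives cancel in $r+\mathcal{E}^1 w=u_1-\mathcal{E}^1(\Pi_h^2u_2)$, which is why the quantity in the lemma is harmless, but your decomposition separates them.

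The route can be repaired by doing the Hardy step one derivative lower. With $c=1-\beta_2/\beta_1$ and $g=\nabla u_2\cdot\bn\in H^1$, write $\nabla r=s+c\,\phi\,\nabla g$, where $s=\nabla u_1-\nabla u_2+c\,g\,\nabla\phi\in H^1(T_F^1)$. Since $\nabla\phi=\bn$ on $\Gamma$, both interface conditions give $s|_{\Gamma}=0$. A one-dimensional Poincar\'e inequality in $\eta$ then yields $\|s\|_{T_F^1}\le Ch\|\nabla s\|_{T_F^1}\le C(\beta_2/\beta_1)h\|u\|_{2,\Omega}$. Also $\|c\,\phi\nabla g\|_{T_F^1}\le C(\beta_2/\beta_1)h\|u_2\|_{2}$ because $|\phi|\le Ch$. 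Finally, $r\in H^1$ with $r|_{\Gamma}=0$ gives the $m=0$ case by one more Poincar\'e step.

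For $m=2$, do not split at all. The function $\omega=\Pi_h^1u_1-\mathcal{E}^1(\Pi_h^2u_2)$ is built from linear polynomials, so $\nabla^2\omega$ is explicit and pointwise bounded by $C(\beta_2/\beta_1)|\nabla\Pi_h^2u_2|$. This is what the paper's proof exploits. It Taylor-expands $\omega$ itself from the interface point $M$ and uses the interface conditions to turn $\omega(M)$ and $\nabla\omega(M)$ into traces of the projection errors $\Pi_h^su_s-u_s$. The remainder is then controlled through $\nabla^2\omega$, so third derivatives of $u$ never appear.

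A smaller point: bounding $w=u_2-\Pi_h^2u_2$ on $T_1\subset\Omega_1$ is not literally covered by Lemma \ref{projection_error_in_Ti}, which works on $T_F^2$. It does hold here, because Lemma \ref{Norm_equivalence} makes the $L^2(T_2)$-projection stable on all of $T$.
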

\begin{proof}
	For the interface element $T \in \mathcal{T}_h^I$, let $\omega=\Pi_h^1 u_1 - \mathcal{E}^1(\Pi_h^2 u_2)$. For the point $X=(x,y)^T \in T_1$, assume that both $AX$ and $BX$ do not intersect with the interface $\Gamma$, while $CX$ intersects with the interface $\Gamma$ at point $M(x_1,y_1)$.	
	
	\begin{figure}[H]
		\centering
		\setlength{\unitlength}{1bp}%
		\begin{picture}(129.32, 140.89)(0,0)
			\put(0,0){\includegraphics[scale=1.5]{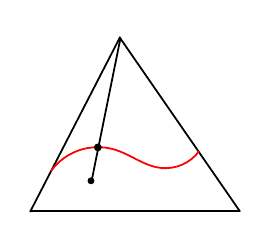}}
			\put(5.67,9.29){\fontsize{14.23}{17.07}\selectfont $A$}
			\put(176.56,10.73){\fontsize{14.23}{17.07}\selectfont $B$}
			\put(77.91,147.11){\fontsize{14.23}{17.07}\selectfont $C$}
			\put(70.37,35.42){\fontsize{10.23}{15.07}\selectfont $X(x,y)$}
			\put(110.37,35.42){\fontsize{14.23}{17.07}\selectfont $T_1$}
			\put(50.37,50.42){\fontsize{10.23}{15.07}\selectfont $\Gamma$}
			\put(75.09,70.65){\fontsize{10.23}{15.07}\selectfont $M(x_1,y_1)$}
    		\put(88.09,90.65){\fontsize{14.23}{17.07}\selectfont $T_2$}
		\end{picture}%
	\end{figure}	
	
Since $t \mapsto \omega(tx+(1-t)x_1,ty+(1-t)y_1)$ is $C^2$ function in terms of $t$, using the Newton-Leibniz formula and integration by parts yields
	\begin{eqnarray*}
		\begin{split}
			\omega(x,y)=&\omega(x_1,y_1)+\int_{0}^{1} \frac{d\omega(tx+(1-t)x_1,ty+(1-t)y_1)}{dt} dt\\
			=&\omega(x_1,y_1)+\omega_x(x_1,y_1)(x-x_1)+\omega_y(x_1,y_1)(y-y_1)\\
			&+\int_{0}^{1} (1-t)\frac{d^2\omega(tx+(1-t)x_1,ty+(1-t)y_1)}{dt^2} dt.		
		\end{split}
	\end{eqnarray*}
For the case $m=0$, the left-side term of the estimate \eqref{EstimatesInT2_1} is as follows
\begin{eqnarray*}
	\begin{split}
		\|\omega(x,y)\|_{T_1} \leqslant &	\|\omega(x_1,y_1)\|_{T_1} + \|\nabla \omega(x_1,y_1) \cdot \overrightarrow{MX}\|_{T_1}\\
		&+\left\|\int_{0}^{1}(1-t)\frac{d^2\omega(tx+(1-t)x_1,ty+(1-t)y_1)}{dt^2} dt\right\|_{T_1}.
	\end{split}
\end{eqnarray*}
Next, we estimate each term on the right side of the above inequality. 

(i) For the first term, by the interface condition \eqref{Interface_condition_1}, we obtain
\begin{eqnarray*}
	\begin{split}
		&\|\omega(x_1,y_1)\|_{T_1}^2\\
		=&\int_{T_1} \omega^2 (x_1,y_1)dT_1\\
		=&\int_{T_1} (\Pi_h^1 u_1(x_1,y_1) - \mathcal{E}^1(\Pi_h^2 u_2)(x_1,y_1))^2 dT_1\\
		=&\int_{T_1} (\Pi_h^1 u_1(x_1,y_1) - \Pi_h^2 u_2(x_1,y_1))^2 dT_1\\
		=&\int_{T_1} (\Pi_h^1 u_1(x_1,y_1) -u_1(x_1,y_1)+ u_2(x_1,y_1)-\Pi_h^2 u_2(x_1,y_1))^2 dT_1\\
		\leqslant & 2\int_{T_1} (\Pi_h^1 u_1(x_1,y_1)-u_1(x_1,y_1))^2  dT_1+2 \int_{T_1} (\Pi_h^2 u_2(x_1,y_1)-u_2(x_1,y_1))^2  dT_1.
	\end{split}
\end{eqnarray*}
Let $\widehat{\Pi_h^s u_s}(\eta, \xi)=\Pi_h^s u_s \circ P_{\Gamma} (\eta, \xi)$ and $\widehat{u}_s(\eta, \xi)= u_s\circ P_{\Gamma} (\eta, \xi)$, $s=1,2$. Using $(x_1,y_1)=P_{\Gamma}(0,\xi)$ leads to
\begin{eqnarray*}
	\Pi_h^s u_s (x_1,y_1)=\widehat{\Pi_h^s u_s}(0, \xi),\,\,  u_s(x_1,y_1)=\widehat{u}_s(0, \xi), \quad s=1,2.
\end{eqnarray*}
Let $\widehat{T}_1=R_{\Gamma}(T_1) \subset \widehat{T}^1_F$ and $\widehat{T}_2=R_{\Gamma}(T_2) \subset \widehat{T}^2_F$. It follows from $ |\widehat{D}P_{\Gamma}| \simeq |DP_{\Gamma}(x,y)|\simeq 1$, the trace inequality and Lemma \ref{projection_error_in_Ti} that
\begin{align*}
			&\|\omega(x_1,y_1)\|_{T_1}^2\\
			\leqslant & 2 \int_{\widehat{T}_1} \left(\widehat{\Pi_h^1 u_1} (0,\xi)-\widehat{u}_1(0,\xi) \right)^2 |DP_{\Gamma}| d\xi d \eta + 2 \int_{\widehat{T}_1} \left(\widehat{\Pi_h^2 u_2} (0,\xi)-\widehat{u}_2(0,\xi) \right)^2 |DP_{\Gamma}| d\xi d \eta\\
			\leqslant& 2\int_{-h}^{h}\int_{a_T}^{b_T}  \left(\widehat{\Pi_h^1 u_1} (0,\xi)-\widehat{u}_1(0,\xi) \right)^2 |DP_{\Gamma}| d\xi d \eta + 2\int_{-h}^{h}\int_{a_T}^{b_T}   \left(\widehat{\Pi_h^2 u_2} (0,\xi)-\widehat{u}_2(0,\xi) \right)^2 |DP_{\Gamma}|d\xi d \eta  \\
			\leqslant& Ch\int_{a_T}^{b_T} 	\left(\widehat{\Pi_h^1 u_1} (0,\xi)-\widehat{u}_1(0,\xi) \right)^2	d\xi +  Ch \int_{a_T}^{b_T} 	\left(\widehat{\Pi_h^2 u_2} (0,\xi)-\widehat{u}_2(0,\xi) \right)^2	d\xi\\
			\leqslant & Ch \|\widehat{\Pi_h^1 u_1}-\widehat{u}_1\|_{\widehat{\Gamma}_{T_F}}^2 + Ch \|\widehat{\Pi_h^2 u_2}-\widehat{u}_2\|_{\widehat{\Gamma}_{T_F}}^2\\
			\leqslant & Ch\left( h_{\widehat{T}_F}^{-1} \|\widehat{\Pi_h^1 u_1}-\widehat{u}_1\|_{\widehat{T}^1_F}^2+ h_{\widehat{T}_F} \|\widehat{\nabla}(\widehat{\Pi_h^1 u_1}-\widehat{u}_1)\|_{\widehat{T}^1_F}^2  \right)+Ch\left( h_{\widehat{T}_F}^{-1} \|\widehat{\Pi_h^2 u_2}-\widehat{u}_2\|_{\widehat{T}^2_F}^2+ h_{\widehat{T}_F} \|\widehat{\nabla}(\widehat{\Pi_h^2 u_2}-\widehat{u}_2)\|_{\widehat{T}^2_F}^2  \right)\\
			\leqslant & C \left( \| \Pi_h^1 u_1-u_1 \|_{T_F^1}^2  + h_T^2\|\nabla( \Pi_h^1 u_1-u_1)\|_{T_F^1}^2 \right)+C \left( \| \Pi_h^2 u_2-u_2 \|_{T_F^2}^2  + h_T^2\|\nabla( \Pi_h^2 u_2-u_2)\|_{T_F^2}^2 \right).
\end{align*}
Thus, we have
\begin{align*}
	\sum_{T \in \mathcal{T}_h^I} \|\omega(x_1,y_1)\|_{T_1}^2 \leqslant Ch^4(\|u_1\|_{2,\Omega_{1}}^2+\|u_2\|_{2,\Omega_{2}}^2).
\end{align*}

(ii) For the second term, set  $\overrightarrow{MX}=a\bn + b {\bm\tau} $ with $a=(\overrightarrow{MX},\bn)$ and $b=(\overrightarrow{MX},{\bm\tau})$. According to $|\overrightarrow{MX}|\leqslant Ch$ and the interface condition \eqref{Interface_condition_2}, we get
\begin{align*}
		&\|\beta_1 \nabla \omega(x_1,y_1) \cdot \overrightarrow{MX}\|_{T_1}^2\\
		=&\int_{T_1} \beta_1^2 \left(\nabla \omega(x_1,y_1) \cdot \overrightarrow{MX} \right)^2 dT_1\\
		=& \int_{T_1} (\beta_1 \left(\nabla \omega(x_1,y_1) \cdot (a\bn + b {\bm\tau})\right)^2 dT_1\\
		\leqslant& C h^2 \int_{T_1} (\beta_1 \nabla (\Pi_h^1 u_1(x_1,y_1) - \mathcal{E}^1(\Pi_h^2 u_2)(x_1,y_1)) \cdot \bn )^2dT_1\\
		&+C h^2 \int_{T_1} (\beta_1 \nabla (\Pi_h^1 u_1(x_1,y_1) - \mathcal{E}^1(\Pi_h^2 u_2)(x_1,y_1)) \cdot {\bm\tau} )^2dT_1\\
		\leqslant& Ch^2\int_{T_1} \left(
		\beta_1 \nabla (\Pi_h^1 u_1)(x_1,y_1)\cdot \bn - \beta_1 \nabla  u_1(x_1,y_1)\cdot \bn  - \beta_2 \nabla (\Pi_h^2 u_2)(x_1,y_1)\cdot \bn + \beta_2 \nabla u_2(x_1,y_1)\cdot \bn
		\right) dT_1\\
		&+ Ch^2\int_{T_1} \left(
		\beta_1 \nabla (\Pi_h^1 u_1)(x_1,y_1)\cdot {\bm\tau} - \beta_1 \nabla  u_1(x_1,y_1)\cdot {\bm\tau}  - \beta_1 \nabla (\Pi_h^2 u_2)(x_1,y_1)\cdot {\bm\tau} + \beta_1 \nabla u_2(x_1,y_1)\cdot {\bm\tau}
		\right) dT_1.
\end{align*}
Let $\widehat{\bn}=(\widehat{n}_1,\widehat{n}_2)=\bn \circ P_{\Gamma}(\eta, \xi)$. Using the chain rule leads to
\begin{align*}
	\nabla(\Pi_h^2 u_2) =&\begin{pmatrix}
		\dfrac{\partial \widehat{\Pi_h^2 u_2}}{\partial \xi} \dfrac{\partial \xi}{\partial x} + \dfrac{\partial \widehat{\Pi_h^2 u_2}}{\partial \eta} \dfrac{\partial \eta}{\partial x} \\[0.3cm]
	\dfrac{\partial \widehat{\Pi_h^2 u_2}}{\partial \xi} \dfrac{\partial \xi}{\partial y} + \dfrac{\partial \widehat{\Pi_h^2 u_2}}{\partial \eta} \dfrac{\partial \eta}{\partial y} 
	\end{pmatrix}=\begin{pmatrix}
	\dfrac{\partial \xi}{\partial x}&\dfrac{\partial \eta}{\partial x}\\[0.3cm]
	\dfrac{\partial \xi}{\partial y}&\dfrac{\partial \eta}{\partial y} 
\end{pmatrix} \widehat{\nabla}(\widehat{\Pi_h^2 u_2}) ,\\
\nabla(\Pi_h^1 u_1)=&\begin{pmatrix}
	\dfrac{\partial \widehat{\Pi_h^1 u_1}}{\partial \xi} \dfrac{\partial \xi}{\partial x} + \dfrac{\partial \widehat{\Pi_h^1 u_1}}{\partial \eta} \dfrac{\partial \eta}{\partial x} \\[0.3cm]
	\dfrac{\partial \widehat{\Pi_h^1 u_1}}{\partial \xi} \dfrac{\partial \xi}{\partial y} + \dfrac{\partial \widehat{\Pi_h^1 u_1}}{\partial \eta} \dfrac{\partial \eta}{\partial y}
\end{pmatrix}=\begin{pmatrix}
\dfrac{\partial \xi}{\partial x}&\dfrac{\partial \eta}{\partial x}\\[0.3cm]
\dfrac{\partial \xi}{\partial y}&\dfrac{\partial \eta}{\partial y} 
\end{pmatrix} \widehat{\nabla}(\widehat{\Pi_h^1 u_1}),\\
\nabla u_2=&\begin{pmatrix}
	\dfrac{\partial \widehat{u}_2}{\partial \xi} \dfrac{\partial \xi}{\partial x} +
	\dfrac{\partial \widehat{u}_2}{\partial \eta} \dfrac{\partial \eta}{\partial x}\\[0.3cm]
	\dfrac{\partial \widehat{u}_2}{\partial \xi} \dfrac{\partial \xi}{\partial y} + \dfrac{\partial \widehat{u}_2}{\partial \eta} \dfrac{\partial \eta}{\partial y} 
	\end{pmatrix}=\begin{pmatrix}
	\dfrac{\partial \xi}{\partial x}&\dfrac{\partial \eta}{\partial x}\\[0.3cm]
	\dfrac{\partial \xi}{\partial y}&\dfrac{\partial \eta}{\partial y} 
\end{pmatrix}\widehat{\nabla}\widehat{u}_2,\\
\nabla u_1=&\begin{pmatrix}
	\dfrac{\partial \widehat{u}_1}{\partial \xi} \dfrac{\partial \xi}{\partial x} +
	\dfrac{\partial \widehat{u}_1}{\partial \eta} \dfrac{\partial \eta}{\partial x}\\[0.3cm]
	\dfrac{\partial \widehat{u}_1}{\partial \xi} \dfrac{\partial \xi}{\partial y} + \dfrac{\partial \widehat{u}_1}{\partial \eta} \dfrac{\partial \eta}{\partial y} 
\end{pmatrix}=\begin{pmatrix}
\dfrac{\partial \xi}{\partial x}&\dfrac{\partial \eta}{\partial x}\\[0.3cm]
\dfrac{\partial \xi}{\partial y}&\dfrac{\partial \eta}{\partial y} 
\end{pmatrix} \widehat{\nabla}\widehat{u}_1.
\end{align*}
It follows from \cite[Theorem 2]{IFEHigh_2024}, the definition of $P_{\Gamma}$, and the trace inequality that
\begin{align*}
		&\int_{T_1} \left(
		\beta_1 \nabla (\Pi_h^1 u_1)(x_1,y_1)\cdot \bn - \beta_1 \nabla  u_1(x_1,y_1)\cdot \bn\right)^2 dT_1\\
		= & \int_{\widehat{T}_1} \left(\beta_1 \begin{pmatrix}
			\frac{\partial \xi}{\partial x}&\frac{\partial \eta}{\partial x}\\[0.1cm]
			\frac{\partial \xi}{\partial y}&\frac{\partial \eta}{\partial y} 
		\end{pmatrix} \left(\widehat{\nabla}(\widehat{\Pi_h^1 u_1})-\widehat{\nabla}(\widehat{u_1}) \right)\cdot \widehat{\bn} \right)^2 |DP_{\Gamma}|d\xi d\eta\\
\leqslant& C \int_{\widehat{T}_1} \beta_1^2 \left(\widehat{\nabla}(\widehat{\Pi_h^1 u_1})(0,\xi) \cdot \bn - \widehat{\nabla}\widehat{u}_1(0,\xi) \cdot \bn \right)^2 d\xi d\eta\\
\leqslant & Ch\int_{a_T}^{b_T} \beta_1^2 \left(\widehat{\nabla}(\widehat{\Pi_h^1 u_1})(0,\xi) \cdot \bn - \widehat{\nabla}\widehat{u}_1(0,\xi) \cdot \bn \right)^2 d\xi \\
\leqslant& C\beta_1^2 h\| \widehat{\nabla}(\widehat{\Pi_h^1 u_1}-\widehat{u}_1) \|_{\widehat{\Gamma}_{T_F}}^2\\
\leqslant& C\beta_1^2h\left(h_{\widehat{T}_F}^{-1}\| \widehat{\nabla}(\widehat{\Pi_h^1 u_1}-\widehat{u}_1) \|_{\widehat{T}_F^1}^2+h_{\widehat{T}_F}\|\widehat{\nabla}(\widehat{\nabla}(\widehat{\Pi_h^1 u_1}-\widehat{u}_1))\|_{\widehat{T}_F^1}^2\right)\\
\leqslant&C\beta_1^2(\|\nabla(\Pi_h^1 u_1-u_1)\|_{T_F^1}^2+h^2\|\nabla(\nabla(\Pi_h^1 u_1-u_1))\|_{T_F^1}^2).
\end{align*}
Thus, we derive
\begin{align*}
	\sum_{T \in \mathcal{T}_h^I} \int_{T_1} \left(
	\beta_1 \nabla (\Pi_h^1 u_1)(x_1,y_1)\cdot \bn - \beta_1 \nabla  u_1(x_1,y_1)\cdot \bn\right)^2 dT_1  \leqslant Ch^2\beta_1^2\|u_1\|_{2,\Omega_{1}}^2.
\end{align*}
Similarly, we have
\begin{align*}
	\sum_{T \in \mathcal{T}_h^I}\int_{T_1} \left(
	\beta_2 \nabla (\Pi_h^2 u_2)(x_1,y_1)\cdot \bn - \beta_2 \nabla  u_2(x_1,y_1)\cdot \bn\right)^2 dT_1 \leqslant& Ch^2\beta_2^2 \|u_2\|_{2,\Omega_{2}}^2,\\
	\sum_{T \in \mathcal{T}_h^I}\int_{T_1} \left(
	\beta_1 \nabla (\Pi_h^1 u_1)(x_1,y_1)\cdot {\bm\tau} - \beta_1 \nabla  u_1(x_1,y_1)\cdot {\bm\tau} \right)^2 dT_1 \leqslant& Ch^2\beta_1^2 \|u_1\|_{2,\Omega_{1}}^2,\\
	\sum_{T \in \mathcal{T}_h^I}\int_{T_1} \left(
	\beta_1\nabla (\Pi_h^2 u_2)(x_1,y_1)\cdot {\bm\tau} - \beta_1 \nabla  u_2(x_1,y_1)\cdot {\bm\tau} \right)^2 dT_1 \leqslant& Ch^2\beta_1^2 \|u_2\|_{2,\Omega_{2}}^2.
\end{align*}
Accordingly, we obtain 
\begin{eqnarray*}
	\sum_{T \in \mathcal{T}_h^I}\|\nabla \omega \cdot \overrightarrow{MX} \|_{T_1}^2 \leqslant  Ch^4\left( \left(\dfrac{\beta_2}{\beta_1}\right)^2 \|u_1\|_{2,\Omega_{1}}^2+\|u_2\|_{2,\Omega_{2}}^2\right).
\end{eqnarray*}

(iii) For the third term, using the chain rule yields
\begin{align*}
		&\int_{0}^{1} (1-t)\frac{d^2\omega(tx+(1-t)x_1,ty+(1-t)y_1)}{dt^2} dt\\
		=&\int_{0}^{1} (1-t)(\omega_{xx}(x-x_1)^2+2\omega_{xy}(x-x_1)(y-y_1)+\omega_{yy}(y-y_1)^2) dt \\
		\leqslant& Ch^2 \int_{0}^{1} (1-t)(\omega_{xx}+2\omega_{xy}+\omega_{yy}) dt.
\end{align*}
By the definition of IFE function \eqref{IFE_function1}, the follow functions 
\begin{align*}
		\omega_{xx}=&\left(1-\dfrac{\beta_2}{\beta_1}\right) \left(
		\dfrac{\ell(x,y)}{\sqrt{\ell_x^2+\ell_y^2}}\right)_{xx}\left(\nabla(\Pi_h^2 u_2)\cdot \bn\right)+2\left(1-\dfrac{\beta_2}{\beta_1}\right)\left(
		\dfrac{\ell(x,y)}{\sqrt{\ell_x^2+\ell_y^2}}\right)_{x}\left(\nabla(\Pi_h^2 u_2)\cdot \bn_x\right)\\
		&+\left(1-\dfrac{\beta_2}{\beta_1}\right)\dfrac{\ell(x,y)}{\sqrt{\ell_x^2+\ell_y^2}}\left(\nabla(\Pi_h^2 u_2)\cdot \bn_{xx}\right) ,\\
	\omega_{xy}=&\left(1-\dfrac{\beta_2}{\beta_1}\right)\left(
    \dfrac{\ell(x,y)}{\sqrt{\ell_x^2+\ell_y^2}}\right)_{xy}(\nabla(\Pi_h^2 u_2)\cdot \bn)+\left(1-\dfrac{\beta_2}{\beta_1}\right)\left(
    \dfrac{\ell(x,y)}{\sqrt{\ell_x^2+\ell_y^2}}\right)_{x}(\nabla(\Pi_h^2 u_2)\cdot \bn_y)\\
    &+\left(1-\dfrac{\beta_2}{\beta_1}\right)\left(
    \dfrac{\ell(x,y)}{\sqrt{\ell_x^2+\ell_y^2}}\right)_{y}(\nabla(\Pi_h^2 u_2)\cdot \bn_x)+\left(1-\dfrac{\beta_2}{\beta_1}\right)\left(
    \dfrac{\ell(x,y)}{\sqrt{\ell_x^2+\ell_y^2}}\right)(\nabla(\Pi_h^2 u_2)\cdot \bn_{xy}),\\	
    \omega_{yy}=&\left(1-\dfrac{\beta_2}{\beta_1}\right)\left(
    \dfrac{\ell(x,y)}{\sqrt{\ell_x^2+\ell_y^2}}\right)_{yy}\left(\nabla(\Pi_h^2 u_2)\cdot \bn\right)+2\left(1-\dfrac{\beta_2}{\beta_1}\right)\left(
    \dfrac{\ell(x,y)}{\sqrt{\ell_x^2+\ell_y^2}}\right)_{y}\left(\nabla(\Pi_h^2 u_2)\cdot \bn_{y}\right)\\
    &+\left(1-\dfrac{\beta_2}{\beta_1}\right)\dfrac{\ell(x,y)}{\sqrt{\ell_x^2+\ell_y^2}}\left(\nabla(\Pi_h^2 u_2)\cdot \bn_{yy} \right)
\end{align*}
are bounded. Thus, we have
\begin{align*}
		&\sum_{T \in \mathcal{T}_h^I}\left\|\int_{0}^{1} (1-t)\frac{d^2\omega(tx+(1-t)x_1,ty+(1-t)y_1)}{dt^2} dt\right\|_{T_1}^2\\
		=&\sum_{T \in \mathcal{T}_h^I}\int_{T_1} \left(\int_{0}^{1} (1-t)\frac{d^2\omega(tx+(1-t)x_1,ty+(1-t)y_1)}{dt^2} dt\right)^2 dT_1\\
		\leqslant& Ch^4 \sum_{T \in \mathcal{T}_h^I}\int_{T_1}  (\omega_{xx}+2\omega_{xy}+\omega_{yy})^2 dT_1\\
		\leqslant& Ch^4\left( \|u_1\|_{1,\Omega_{1}}^2+ \|u_2\|_{2,\Omega_{2}}^2 \right).
\end{align*}
To sum up, we obtain 
\begin{eqnarray*}
   \sum_{T \in \mathcal{T}_h^I}	\|\Pi_h^1 u_1 - \mathcal{E}^1(\Pi_h^2 u_2)\|_{T_1} \leqslant C\left(\dfrac{\beta_2}{\beta_1}\right) h^{2}\left( \|u_1\|_{1,\Omega_{1}}^2+ \|u_2\|_{2,\Omega_{2}}^2 \right) .
\end{eqnarray*}

For the case $m=1$, using the Newton-Leibniz formula yields
\begin{eqnarray*}
	\begin{split}
		\omega_x(x,y)=&\omega_x(x_1,y_1)+\int_{0}^{1} \frac{d\omega_x(tx+(1-t)x_1,ty+(1-t)y_1)}{dt} dt
	\end{split}
\end{eqnarray*}
and 
\begin{eqnarray*}
	\begin{split}
		\omega_y(x,y)=&\omega_y(x_1,y_1)+\int_{0}^{1} \frac{d\omega_y(tx+(1-t)x_1,ty+(1-t)y_1)}{dt} dt.
	\end{split}
\end{eqnarray*}
Then, we have
\begin{eqnarray*}
	\begin{split}
		\|\nabla \omega \|_{T_1}^2=&\int_{T_1} (\omega_x^2(x,y)+\omega_y^2(x,y)) dT_1\\
		=&\int_{T_1} \left(\omega_x(x_1,y_1)+\int_{0}^{1} \frac{d\omega_x(tx+(1-t)x_1,ty+(1-t)y_1)}{dt} dt \right)^2 dT_1\\
		 &+ \int_{T_1} \left(\omega_y(x_1,y_1)+\int_{0}^{1} \frac{d\omega_y(tx+(1-t)x_1,ty+(1-t)y_1)}{dt} dt \right)^2 dT_1\\
		 =&I_1+I_2+I_3,
	\end{split}
\end{eqnarray*}
where
\begin{align*}
	I_1=&\int_{T_1} \left(\omega_x^2(x_1,y_1)+\omega_y^2(x_1,y_1) \right) dT_1,\\
	I_2=&\int_{T_1} \left( 2\omega_x(x_1,y_1)\int_{0}^{1} \frac{d\omega_x(tx+(1-t)x_1,ty+(1-t)y_1)}{dt} dt \right)  dT_1\\
	&+ \int_{T_1} \left( 2\omega_y(x_1,y_1)\int_{0}^{1} \frac{d\omega_y(tx+(1-t)x_1,ty+(1-t)y_1)}{dt} dt \right)  dT_1,\\
	I_3=&\int_{T_1} \left( \int_{0}^{1} \frac{d\omega_x(tx+(1-t)x_1,ty+(1-t)y_1)}{dt} dt  \right)^2 dT_1\\
	&+ \int_{T_1}\left( \int_{0}^{1} \frac{d\omega_y(tx+(1-t)x_1,ty+(1-t)y_1)}{dt} dt  \right)^2 dT_1.
\end{align*}

(i) For the first term $I_1$, according to the interface conditions \eqref{Interface_condition_2} and Eq.\eqref{nablau2nablau1}, we obtain
\begin{align*}
	&\nabla \omega(x_1,y_1)\\
	=&\nabla(\Pi_h^1u_1(x_1,y_1)-\mathcal{E}_h^1(\Pi_h^2u_2)(x_1,y_1))\\
	=&\nabla(\Pi_h^1u_1(x_1,y_1)) -\nabla u_1(x_1,y_1)+\nabla u_1(x_1,y_1)-\nabla(\mathcal{E}_h^1(\Pi_h^2u_2)(x_1,y_1))\\
	=&\nabla(\Pi_h^1u_1(x_1,y_1)) -\nabla u_1(x_1,y_1)+N_2(x_1,y_1)\nabla u_2(x_1,y_1)-N_2(x_1,y_1) \nabla(\Pi_h^2u_2(x_1,y_1)).
\end{align*}
Therefore, using the estimate \eqref{Estimate_N} of $N_2(X)$ leads to
\begin{align*}
	&\|I_1\|_{T_1}^2=\|\nabla \omega(x_1,y_1) \|_{T_1}^2 \leqslant C \|\nabla(\Pi_h^1u_1 -u_1)(x_1,y_1)\|_{T_1}^2+ C\left(\dfrac{\beta_2}{\beta_1}\right)^2 \|\nabla(\Pi_h^2u_2 -u_2)(x_1,y_1)\|_{T_1}^2.
\end{align*}
It follows from the chain rule, the trace inequality and Lemma \ref{projection_error_in_Ti} that
\begin{align*}
	&\|\nabla(\Pi_h^2u_2 -u_2)(x_1,y_1)\|_{T_1}^2\\
	\leqslant &C \int_{-h}^{h}  \int_{a_T}^{b_T} \left( \widehat{\nabla}(\widehat{\Pi_h^2 u_2})(0,\xi) - \widehat{\nabla}\widehat{u}_2(0,\xi)\right)^2  d\xi d\eta\\
	\leqslant &Ch \|\widehat{\nabla}(\widehat{\Pi_h^2 u_2}-\widehat{u}_2)\|_{\widehat{\Gamma}_{T_F}}^2\\
	\leqslant& Ch\left( h_{\widehat{T}_F}^{-1}\|\widehat{\nabla}(\widehat{\Pi_h^2 u_2}-\widehat{u}_2)\|_{\widehat{T}_F^2}^2+h_{\widehat{T}_F}\|\widehat{\nabla}(\widehat{\nabla}(\widehat{\Pi_h^2 u_2}-\widehat{u}_2))\|_{\widehat{T}_F^2}^2 \right)\\
	\leqslant & C\left( \|\nabla(\Pi_h^2 u_2-u_2) \|_{T_F^2}^2+h^2 \|\nabla(\nabla(\Pi_h^2 u_2-u_2))\|_{T_F^2}^2 \right).
\end{align*}
Similarly, we get 
\begin{align*}
	\|\nabla(\Pi_h^1u_1 -u_1)(x_1,y_1)\|_{T_1}^2\leqslant C\left( \|\nabla(\Pi_h^1 u_1-u_1) \|_{T_F^1}^2+h^2 \|\nabla(\nabla(\Pi_h^1 u_1-u_1))\|_{T_F^1}^2 \right).
\end{align*}
Therefore, the estimate of $I_1$ is as follows:
\begin{align*}
 \sum_{T \in \mathcal{T}_h^I} \|I_1\|_{T_1}^2 \leqslant  C \left(\dfrac{\beta_2}{\beta_1}\right)^2 h^2(\|u_1\|_{2,\Omega_1}^2+\|u_2\|_{2,\Omega_2}^2).
\end{align*}

(ii) For the third term, we have
\begin{align*}
	    \sum_{T \in \mathcal{T}_h^I}\|I_3\|_{T_1}^2=&\sum_{T \in \mathcal{T}_h^I}\int_{T_1} \left(\int_{0}^{1} \left( \omega_{xx}(x-x_1)+\omega_{xy}(y-y_1)\right)dt \right)^2 dT_1\\
		&+\sum_{T \in \mathcal{T}_h^I}\int_{T_1} \left(\int_{0}^{1} \left( \omega_{yx}(x-x_1)+\omega_{yy}(y-y_1)\right)dt \right)^2 dT_1\\
		\leqslant&C h^2\sum_{T \in \mathcal{T}_h^I}\int_{T_1} \int_{0}^{1} \left( 2\omega_{xx}^2+4\omega_{xy}^2+2\omega_{yy}^2\right) dt dT_1\\
		\leqslant&Ch^2(\|u_1\|_{2,\Omega_1}^2+\|u_2\|_{2,\Omega_2}^2).
\end{align*}
Combining the above two estimates and Young inequality, we get
\begin{eqnarray*}
	   \sum_{T \in \mathcal{T}_h^I}\|I_2\|_{T_1}^2\leqslant C \sum_{T \in \mathcal{T}_h^I}\|I_1\|_{T_1}^2+ C\sum_{T \in \mathcal{T}_h^I}\|I_2\|_{T_1}^2\leqslant C \left(\dfrac{\beta_2}{\beta_1}\right)^2 h^2(\|u_1\|_{2,\Omega_1}^2+\|u_2\|_{2,\Omega_2}^2).
\end{eqnarray*}
To sum up, we obtain
\begin{eqnarray*}
\sum_{T \in \mathcal{T}_h^I}	\|\nabla \omega \|_{T_1}^2\leqslant C \left(\dfrac{\beta_2}{\beta_1}\right)^2 h^2(\|u_1\|_{2,\Omega_1}^2+\|u_2\|_{2,\Omega_2}^2).
\end{eqnarray*}
Finally, using \cite[Lemma 15]{IFE_2025} yields 
\begin{align*}
	\sum_{T \in \mathcal{T}_h^I}\|\Pi_h^1 u_1 - \mathcal{E}^1(\Pi_h^2 u_2)\|_{m, T_1} \leqslant C \left(\dfrac{\beta_2}{\beta_1}\right) h^{2-m}\|u\|_{2,\Omega}.
\end{align*}
The proof of the above lemma is completed.
\end{proof}

\begin{theorem}\label{Projection_estimate_global}
	 Assume $u \in PH^2(\Omega)$ satisfies the problems \eqref{Elliptic_problem} - \eqref{Interface_condition_2}, then we have 
	\begin{eqnarray*}
		\sum_{T \in \mathcal{T}_h} \left\|\Pi_h u- u \right\|_{m,T} \leqslant C\left(\dfrac{\beta_2}{\beta_1}\right) h^{2-m}\|u\|_{2,\Omega}.
	\end{eqnarray*}
\end{theorem}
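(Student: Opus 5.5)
We split the sum in the statement into non-interface and interface elements, $\sum_{T\in\mathcal{T}_h}=\sum_{T\in\mathcal{T}_h^n}+\sum_{T\in\mathcal{T}_h^I}$.

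On $T\in\mathcal{T}_h^n$ we have $\Pi_h u=I_h u$, the local $L^2$ projection onto $P_1(T)$, and $u$ is $H^2$ on $T$. The standard Bramble--Hilbert estimate \cite{Ciarlet_book_1978} gives $\|I_hu-u\|_{m,T}\leqslant Ch^{2-m}|u|_{2,T}$. Summing over $T$ bounds this part by $Ch^{2-m}\|u\|_{2,\Omega}$, and the factor $\beta_2/\beta_1\geqslant 1$ can be absorbed.

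On an interface element we use the IFE function \eqref{IFE_function1} as representative (the other case, \eqref{IFE_function2}, is symmetric). We build a competitor $v_h\in\mathcal{V}_1(T)$ with $v_h|_{T_2}=\Pi_h^2u_2$ and $v_h|_{T_1}=\mathcal{E}^1(\Pi_h^2u_2)$. This is admissible because $\mathcal{E}^1$ is linear and every $p\in P_1$ generates an element of $\mathcal{V}_1(T)$.

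For $m=0$, $P_h$ is the $L^2(T)$-best approximation in $\mathcal{V}_1(T)$, so $\|P_hu-u\|_T\leqslant\|v_h-u\|_T$. We then write
\[
\|v_h-u\|_{T_1}\leqslant \|\mathcal{E}^1(\Pi_h^2u_2)-\Pi_h^1u_1\|_{T_1}+\|\Pi_h^1u_1-u_1\|_{T_1}.
\]
The first term is controlled by Lemma \ref{EstimatesInT2}. The second term, and the $T_2$ part $\|\Pi_h^2u_2-u_2\|_{T_2}$, are controlled by Lemma \ref{projection_error_in_Ti}, since $T_s\subset T_F^s$.

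For $m=1$ we cannot use best approximation directly, since $P_h$ is not an $H^1$ projection. Instead we split
\[
|P_hu-u|_{1,T}\leqslant|P_hu-v_h|_{1,T}+|v_h-u|_{1,T}.
\]
The second term is handled exactly as above with $m=1$, again via Lemma \ref{EstimatesInT2} and Lemma \ref{projection_error_in_Ti}. For the first term, note that $P_hu-v_h\in\mathcal{V}_1(T)$. We therefore need an inverse inequality on the IFE space, $|w_h|_{1,T}\leqslant Ch^{-1}\|w_h\|_T$ for all $w_h\in\mathcal{V}_1(T)$, with $C$ independent of the cut. Given that, $\|P_hu-v_h\|_T\leqslant\|P_hu-u\|_T+\|u-v_h\|_T\leqslant Ch^2$, and the first term is $O(h)$.

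To prove the inverse inequality, write $w_h$ as built from $p\in P_1$ on the dominant piece, here $T_2$ in case \eqref{IFE_function1}. Lemma \ref{Norm_equivalence} gives $\|p\|_{T}\simeq\|p\|_{T_2}\leqslant\|w_h\|_T$. On $T_1$ we have $\|\mathcal{E}^1 p\|_{1,T_1}\leqslant C(\beta_2/\beta_1)|p|_{1,T}+\|p\|_{1,T}$. Here we use that $\ell/|\nabla\ell|=O(h)$ on $T$, that its derivatives are bounded, and that $\nabla p$ is constant. The usual polynomial inverse estimate on $T$ finishes the argument.

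The main obstacle is this inverse estimate and norm equivalence on $\mathcal{V}_1(T)$, uniform in the interface location; this is where Lemma \ref{Norm_equivalence} and the choice of dominant subelement are essential. It also introduces a factor $\beta_2/\beta_1$, consistent with the constant in the statement. Summing the non-interface and interface contributions yields the theorem.
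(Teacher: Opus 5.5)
Your skeleton is the paper's: the same split, the same competitor $q=(\mathcal{E}^1(\Pi_h^2u_2),\Pi_h^2u_2)\in\mathcal{V}_1(T)$, and Lemmas~\ref{projection_error_in_Ti} and~\ref{EstimatesInT2} combined by the triangle inequality. For $m=0$ your argument is exactly the paper's and is fine.

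For $m=1$ you are more careful than the paper. The paper simply asserts $\|P_hu-u\|_{m,T}\leqslant C\|u-q\|_{m,T}$ for both values of $m$. For $m=1$ this is an $H^1$-stability property of the $L^2$ projection onto $\mathcal{V}_1(T)$, and it does require an inverse estimate on $\mathcal{V}_1(T)$ like yours. (Equivalently one can argue via Poincar\'e: $\mathcal{E}^1(1)=1$, so constants lie in $\mathcal{V}_1(T)$, and $u-q\in H^1(T)$.) Your derivation of that inverse estimate is sound: norm equivalence on the dominant piece, $\ell/|\nabla\ell|=O(h)$ with bounded derivatives, and $\nabla p$ constant.

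\textbf{The gap.} The error is your claim that the resulting factor is ``consistent with the constant in the statement''. Your inverse constant is $C\beta_2/\beta_1$, and it multiplies the factor already present in the $m=0$ bound of Lemma~\ref{EstimatesInT2}:
\[
|P_hu-v_h|_{1,T}\leqslant C\frac{\beta_2}{\beta_1}h^{-1}\|P_hu-v_h\|_T\leqslant 2C\frac{\beta_2}{\beta_1}h^{-1}\|u-v_h\|_T .
\]
After summation this gives $C(\beta_2/\beta_1)^2h\|u\|_{2,\Omega}$, not $C(\beta_2/\beta_1)h\|u\|_{2,\Omega}$.

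A sharper inverse estimate cannot remove this, as the following example shows.
\begin{itemize}
\item Take a straight interface parallel to $AB$ at distance $\epsilon h$. Then $T_1$ is a strip of width $\epsilon h$, and \eqref{IFE_function1} is the construction in use.
\item Let $\ell=d$ be the signed distance and $p=d-\bar d_{T_2}$. Then $w_h|_{T_1}=(\beta_2/\beta_1)d-\bar d_{T_2}$.
\item Choose $\epsilon=(\beta_2/\beta_1)^{-2/3}$. Then $\|w_h\|_T\sim h^2$, while $|w_h|_{1,T}^2\approx|T_2|+(\beta_2/\beta_1)^2|T_1|$ gives $|w_h|_{1,T}\sim(\beta_2/\beta_1)^{2/3}h$.
\end{itemize}
So the inverse constant on $\mathcal{V}_1(T)$ grows at least like $(\beta_2/\beta_1)^{2/3}$, uniformly over cut positions.

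\textbf{What your argument actually proves.} It gives the theorem for $m=0$. For $m=1$ it gives only the constant $(\beta_2/\beta_1)^2$. The \eqref{IFE_function2} case is not ``symmetric'' but harmless: there $1-\beta_1/\beta_2\in[0,1)$, so the inverse constant is $\beta$-free.

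To reach the stated linear factor you would need a different ingredient. Two possibilities are a $\beta$-free $L^2$ bound for the competitor, or an $H^1$ argument that avoids the inverse estimate. Note that the paper's one-line quasi-optimality step hides the same $H^1$-stability constant of $P_h$ inside its generic $C$, so the linear dependence for $m=1$ is not substantiated there either. You should either state the weaker constant or supply such an ingredient.
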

\begin{proof}
		For the non-interface element $T \in \mathcal{T}_h^n$, it follows from the projection estimate that
		\begin{align*}
			\sum_{T \in \mathcal{T}_h^n}\left\|I_h u- u \right\|_{m,T} \leqslant Ch^{2-m}\|u\|_{2,\Omega}.
		\end{align*}
	For the interface element $T \in \mathcal{T}_h^I$ and any $q \in \mathcal{V}_1(T)$, we obtain
	\begin{eqnarray*}
	\sum_{T \in \mathcal{T}_h^I}\left\|P_h u- u \right\|_{m,T} \leqslant C \sum_{T \in \mathcal{T}_h^I}	\left\| u-q \right\|_{m,T}.
	\end{eqnarray*}
Taking $q=(\mathcal{E}^1(\Pi_h^2 u_2),\Pi_h^2 u_2)$ and using Lemma \ref{projection_error_in_Ti} and Lemma \ref{EstimatesInT2} leads to
\begin{eqnarray*}
	\begin{split}
		\sum_{T \in \mathcal{T}_h^I}\left\| u-q \right\|_{m,T} \leqslant& \sum_{T \in \mathcal{T}_h^I} (\left\| u_1 -\mathcal{E}^1(\Pi_h^2 u_2) \right\|_{m,T \cap \Omega_1}+\left\| u_2 -\Pi_h^2 u_2 \right\|_{m,T \cap \Omega_2})\\
		\leqslant& \sum_{T \in \mathcal{T}_h^I} (\left\| u_1 -\Pi_h^1 u_1 \right\|_{m,T_1}+\left\|\Pi_h^1 u_1 -\mathcal{E}^1(\Pi_h^2 u_2) \right\|_{m,T \cap \Omega_1}+\left\| u_2 -\Pi_h^2 u_2 \right\|_{m,T_2})\\
		\leqslant& C\left(\dfrac{\beta_2}{\beta_1}\right)h^{2-m}\|u\|_{2,\Omega}.
	\end{split}
\end{eqnarray*}
The proof of the above theorem is completed.
\end{proof}

\begin{remark}
Similar to the analysis in Lemma \ref{EstimatesInT2} and Theorem \ref{Projection_estimate_global} , we derive the following estimate for the IFE function \eqref{IFE_function2}:
	\begin{eqnarray*}
		\sum_{T \in \mathcal{T}_h} \left\|\Pi_h u- u \right\|_{m,T} \leqslant C h^{2-m}\|u\|_{2,\Omega}.
\end{eqnarray*}
\end{remark}

\section{The Important Inequality}
In this section, we derive an important inequality for the error analysis. The interface element intersects with the interface in two patterns.

\begin{figure}[H]
	\centering
	\setlength{\unitlength}{1bp}%
	\begin{picture}(299.30, 127.10)(0,0)
		\put(0,0){\includegraphics{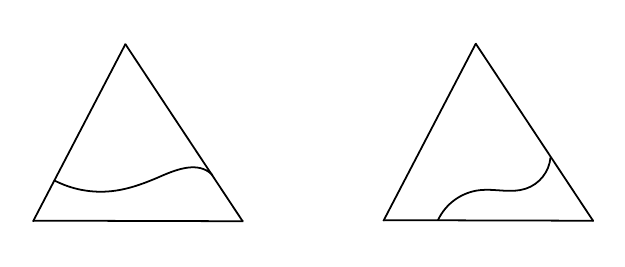}}
		\put(0.67,18.14){\fontsize{14.23}{17.07}\selectfont $A$}
		\put(51.03,110.08){\fontsize{14.23}{17.07}\selectfont $C$}
		\put(118.36,18.62){\fontsize{14.23}{17.07}\selectfont $B$}
		\put(15.08,41.31){\fontsize{14.23}{17.07}\selectfont $D$}
		\put(106.05,44.44){\fontsize{14.23}{17.07}\selectfont $E$}
		\put(167.85,18.38){\fontsize{14.23}{17.07}\selectfont $A$}
		\put(219.22,110.32){\fontsize{14.23}{17.07}\selectfont $C$}
		\put(286.54,18.87){\fontsize{14.23}{17.07}\selectfont $B$}
		\put(206.33,8.73){\fontsize{14.23}{17.07}\selectfont $D$}
		\put(266.42,54.58){\fontsize{14.23}{17.07}\selectfont $E$}
		\put(79.25,29.58){\fontsize{14.23}{17.07}\selectfont $T_1$}
		\put(58.98,57.57){\fontsize{14.23}{17.07}\selectfont $T_2$}
		\put(257.57,27.16){\fontsize{14.23}{17.07}\selectfont $T_1$}
		\put(223.79,55.39){\fontsize{14.23}{17.07}\selectfont $T_2$}
	\end{picture}%
	\caption{Two intersection patterns between the interface and an interface element. Left: Type 1 element, Right: Type 2 element.}
	\label{Two_patterns_of_interface}
\end{figure}
Next, we prove the following inequality using the Type 1 element as an example. A similar argument shows that this inequality also holds for the Type 2 element.
\begin{lemma}\label{Estimate_interface_function}
	For the level-set function $\ell(x,y)$ and the interface element $T \in \mathcal{T}_h^I$, the following estimates hold true:
	\begin{align*}
		&\left\|\dfrac{\ell(x,y)\ell_x(x,y)}{\ell_x^2(x,y)+\ell_y^2(x,y)}\right\|_{\infty, T} \leqslant Ch,\quad \left\|\dfrac{\ell(x,y)\ell_y(x,y)}{\ell_x^2(x,y)+\ell_y^2(x,y)}\right\|_{\infty, T} \leqslant Ch ,\\
			&\left\|\nabla(\dfrac{\ell(x,y)\ell_x(x,y)}{\ell_x^2(x,y)+\ell_y^2(x,y)})\right\|_{\infty, T} \leqslant C,\quad \left\|\nabla(\dfrac{\ell(x,y)\ell_y(x,y)}{\ell_x^2(x,y)+\ell_y^2(x,y)})\right\|_{\infty, T} \leqslant C,\\
			&\left\| \nabla\left(\nabla\left(\dfrac{\ell\ell_x}{\ell_x^2+\ell_y^2}\right) \right)\right\|_{\infty, T}^2 \leqslant C, \quad \left\| \nabla\left(\nabla\left(\dfrac{\ell\ell_y}{\ell_x^2+\ell_y^2}\right) \right)\right\|_{\infty, T}^2 \leqslant C.
	\end{align*}
\end{lemma}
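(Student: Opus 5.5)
The plan is to write the two functions as
\[
\frac{\ell\,\ell_x}{\ell_x^2+\ell_y^2}=d\,n_x,\qquad \frac{\ell\,\ell_y}{\ell_x^2+\ell_y^2}=d\,n_y,\qquad d:=\frac{\ell}{|\nabla \ell|},\quad (n_x,n_y):=\frac{\nabla\ell}{|\nabla \ell|},
\]
and to bound the factors separately. Since $\overline{T}\subset U_{\delta_0}$, we have $|\nabla \ell|\geqslant c_0>0$ on $T$. Because $\ell\in C^3(U_{\delta_0})$ and $\overline{U_{\delta_0}}$ is compact, all derivatives of $\ell$ up to order three are bounded there. Hence $|\nabla\ell|^{-1}$ is $C^2$ with bounded derivatives up to order two. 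Consequently $n_x,n_y$, and also $\nabla\ell/|\nabla\ell|^2$, are $C^2$ functions whose first and second derivatives are bounded by constants depending only on $c_0$ and $\|\ell\|_{C^3(U_{\delta_0})}$, not on $h$ or on the cut position.

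The key step, and the only one producing a power of $h$, is the bound $\|\ell\|_{\infty,T}\leqslant Ch$. Take $X\in T$. Because $T$ is an interface element, $\Gamma\cap\overline{T}\neq\emptyset$, so we can pick $Y\in\Gamma\cap\overline{T}$, where $\ell(Y)=0$. The segment $XY$ lies in the convex set $T\subset U_{\delta_0}$. The mean value theorem then gives
\[
|\ell(X)|=|\ell(X)-\ell(Y)|\leqslant \|\nabla\ell\|_{\infty,U_{\delta_0}}\,|X-Y|\leqslant C h_T\leqslant Ch.
\]
Multiplying by the bounded factors $\ell_x/|\nabla\ell|^2$ and $\ell_y/|\nabla\ell|^2$ yields the first line of the lemma.

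For the second and third lines, apply the product rule to $\ell\cdot(\ell_x/|\nabla\ell|^2)$. The first derivative is
\[
\nabla\ell\,\frac{\ell_x}{|\nabla\ell|^2}+\ell\,\nabla\!\left(\frac{\ell_x}{|\nabla\ell|^2}\right).
\]
Both terms are bounded: the first needs only $\ell\in C^1$ and $|\nabla\ell|\geqslant c_0$, and the second is $O(h)\cdot O(1)$. The second derivatives form a sum of products of derivatives of $\ell$ of order at most two with derivatives of $\ell_x/|\nabla\ell|^2$ of order at most two. The latter involve third derivatives of $\ell$ and powers of $|\nabla\ell|^{-1}$, all bounded by the $C^3$ assumption and the lower bound $c_0$. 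This gives a uniform constant $C$, which also bounds its square as stated. The same argument applies verbatim with $\ell_y$ in place of $\ell_x$.

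The main obstacle is making sure the constants do not depend on the interface location. This is handled by the observation that every constant above comes only from $c_0$, $\delta_0$, and $\|\ell\|_{C^3(U_{\delta_0})}$. The $h$ in the first line comes solely from the distance of $X$ to a zero of $\ell$ inside $\overline{T}$, so it is uniform regardless of whether $T$ is a Type 1 or Type 2 element. No separate treatment of the two patterns is needed, since the argument never uses the intersection pattern.
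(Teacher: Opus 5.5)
The paper states this lemma without proof; it is used directly in the proof of the trace inequality that follows. So there is no argument of the authors to compare against, and your proof correctly supplies the missing one.

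Your route is the natural one:
\begin{itemize}
\item You write the two functions as $d\,n_x$ and $d\,n_y$ with $d=\ell/|\nabla\ell|$.
\item The factor $h$ comes only from $|\ell(X)|=|\ell(X)-\ell(Y)|\leqslant \|\nabla\ell\|_{\infty}|X-Y|$, with $Y\in\Gamma\cap\overline{T}$ and the segment lying in the convex set $\overline{T}$.
\item Every derivative is bounded by the product rule, together with $|\nabla\ell|\geqslant c_0$ and the $C^3$ regularity of $\ell$.
\end{itemize}
You also observe that all constants depend only on $c_0$ and the derivative bounds of $\ell$. Hence they are independent of how $\Gamma$ cuts $T$ and of the Type 1/Type 2 pattern, which is exactly the uniformity the paper claims but never justifies.

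One technical step needs tightening. $\ell\in C^3(U_{\delta_0})$ on an open set whose closure is compact does not by itself make the derivatives of $\ell$ bounded, since they could blow up near the boundary of $U_{\delta_0}$. You can fix this in either of two ways:
\begin{itemize}
\item Read the hypothesis as $\ell\in C^3(\overline{U_{\delta_0}})$.
\item Fix $h_0<\delta_0$. Every interface element meets $\Gamma$ and has diameter at most $h$, so for all $h\leqslant h_0$ it satisfies $\overline{T}\subset K:=\{X:\mathrm{dist}(X,\Gamma)\leqslant h_0\}$. Since $\Gamma$ is compact, $K$ is a compact subset of $U_{\delta_0}$, and continuity on $K$ bounds the derivatives of $\ell$ up to order three uniformly in $h$ and $T$.
\end{itemize}
With that adjustment the argument is complete. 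The squared norms in the last line are then trivially bounded by $C^2$.
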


\begin{theorem}
For the interface element $T \in \mathcal{T}_h^I$, we have
\begin{eqnarray}\label{Trace_inverse_inequality_interface}
	\|\beta \nabla \varphi\|_{\partial T} \leqslant C \dfrac{\beta_2}{\sqrt{\beta_1}}h^{-\tfrac{1}{2}}\|\sqrt{\beta}\nabla \varphi \|_{T},\quad  \forall \, \varphi \in \mathcal{V}_1(T).
\end{eqnarray}	
\end{theorem}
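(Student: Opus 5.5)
We argue for the case of definition \eqref{IFE_function1}, where $\varphi_2=p\in P_1(T)$ and $\varphi_1=\mathcal{E}^1(p)$; the case \eqref{IFE_function2} is analogous and gives a better constant. The idea is to bound both sides by the constant vector $\nabla p$. On one side, $\|\beta\nabla\varphi\|_{\partial T}$ is controlled by $|\nabla p|$ through $L^\infty$ bounds. On the other side, $|\nabla p|^2|T|$ is controlled by $\|\sqrt\beta\nabla\varphi\|_T^2$ through Lemma \ref{Norm_equivalence}.

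\textbf{Pointwise gradient bound.} Write $\bn=\nabla\ell/|\nabla\ell|$. Then
\[
\mathcal{E}^1(p)=p-\left(1-\tfrac{\beta_2}{\beta_1}\right)\left(\tfrac{\ell\ell_x}{\ell_x^2+\ell_y^2}\,p_x+\tfrac{\ell\ell_y}{\ell_x^2+\ell_y^2}\,p_y\right).
\]
Since $\nabla p$ is constant, differentiating this expression and applying Lemma \ref{Estimate_interface_function} (only the first-derivative bounds are needed) gives
\[
|\nabla\varphi_1|\leqslant |\nabla p|+C\tfrac{\beta_2}{\beta_1}|\nabla p|\leqslant C\tfrac{\beta_2}{\beta_1}|\nabla p| \quad \text{on } T_1,
\]
because $\beta_2\geqslant\beta_1$. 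On $T_2$ we have $\nabla\varphi_2=\nabla p$.

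\textbf{Boundary term.} From the pointwise bound,
\[
\|\beta\nabla\varphi\|_{\partial T}^2\leqslant \beta_1^2 C\left(\tfrac{\beta_2}{\beta_1}\right)^2|\nabla p|^2|\partial T|+\beta_2^2|\nabla p|^2|\partial T|\leqslant C\beta_2^2 h|\nabla p|^2 .
\]

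\textbf{Lower bound for the volume term.} In this case Lemma \ref{Norm_equivalence} gives $\|\cdot\|_{T_2}\simeq\|\cdot\|_T$ on $P_1(T)$. Applying it to the components of $\nabla p$, which are constants and hence in $P_1(T)$, gives $|T_2|\geqslant c|T|\geqslant ch^2$ by shape regularity. Therefore
\[
\|\sqrt\beta\nabla\varphi\|_T^2\geqslant \beta_2\|\nabla p\|_{T_2}^2\geqslant c\beta_2h^2|\nabla p|^2 .
\]

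\textbf{Combination.} Dividing the boundary bound by the volume bound yields
\[
\|\beta\nabla\varphi\|_{\partial T}^2\leqslant C\beta_2h^{-1}\|\sqrt\beta\nabla\varphi\|_T^2 .
\]
This is even sharper than the claim, because $\beta_2\leqslant \beta_2^2/\beta_1$. The stated constant $\beta_2/\sqrt{\beta_1}$ is what results if the argument is run in the other case, \eqref{IFE_function2}, as follows.

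\textbf{The case \eqref{IFE_function2}.} Here $\varphi_1=p$ and $\varphi_2=\mathcal{E}^2(p)$, and Lemma \ref{Norm_equivalence} gives $|T_1|\geqslant c|T|$. Since $|1-\beta_1/\beta_2|\leqslant1$, the same computation gives $|\nabla\varphi_2|\leqslant C|\nabla p|$ on $T_2$. The boundary term is then at most $C\beta_2^2h|\nabla p|^2$, while the volume term is at least $c\beta_1h^2|\nabla p|^2$. The ratio is $C\beta_2^2/\beta_1\cdot h^{-1}$, which gives exactly \eqref{Trace_inverse_inequality_interface}.

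\textbf{Main obstacle.} The delicate step is the uniform lower bound on the volume of the dominant subelement, since it must hold independently of how the interface cuts $T$. This is exactly what Lemma \ref{Norm_equivalence} supplies through the case split on $|CD|,|CE|$. Note that this split is the same one that selects \eqref{IFE_function1} versus \eqref{IFE_function2}, so the polynomial piece always lives on the large subelement. The second point to check is that Lemma \ref{Estimate_interface_function} bounds the derivatives of $\ell\nabla\ell/|\nabla\ell|^2$ uniformly on all of $T$, including across $\Gamma$. This holds because $\ell\in C^3(U_{\delta_0})$ and $|\nabla\ell|\geqslant c_0$ on $U_{\delta_0}\supset\overline T$.
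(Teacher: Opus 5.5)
Your proof is correct, but it takes a different route from the paper's.

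\textbf{The paper's route.} The paper argues edge by edge with trace and inverse inequalities. On the part of an edge where the polynomial piece lives, it uses the discrete trace/inverse inequality on the straight subregion ($CDE$ or $ABED$) together with Lemma \ref{Norm_equivalence}. On the part where the non-polynomial piece lives, it extends that piece to all of $T$ and applies the continuous trace inequality $\|v\|_e\leqslant C(h_T^{-1/2}\|v\|_T+h_T^{1/2}\|\nabla v\|_T)$. This forces it to bound $\|\nabla(\beta_1\nabla\varphi_1)\|_T$ through the Hessian estimates of Lemma \ref{Estimate_interface_function}. It also has to control the extension on the wrong side, $\|\beta_1\nabla\varphi_1\|_{T_2}\leqslant C\|\beta_2\nabla\varphi_2\|_{T_2}$.

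\textbf{Your route.} You use the fact that $\nabla p$ is a constant vector. A pointwise gradient bound on $\overline{T}$, the trivial estimate $|\partial T|\leqslant Ch_T$, and the measure bound $|T_{\mathrm{dom}}|\geqslant c|T|$ (Lemma \ref{Norm_equivalence} applied to constants) then finish the job at once.

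\textbf{What each buys.} Your version is shorter. It needs no trace inequality, no extension, and no distinction between interface and non-interface edges. It uses only first derivatives of $\ell\nabla\ell/|\nabla\ell|^2$, so $\ell\in C^2$ suffices for this step. It also shows transparently that the constant is $\sqrt{\beta_2}$ for \eqref{IFE_function1} and $\beta_2/\sqrt{\beta_1}$ for \eqref{IFE_function2}. This matches what the paper actually obtains before weakening the first case. The paper's argument stays within the usual trace/inverse-inequality framework of DG analysis. Yours is more elementary but leans on $\nabla p$ being constant; for higher degree you would add an $L^\infty$--$L^2$ inverse estimate on the polynomial piece.

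\textbf{Two small fixes.} First, your opening sentence says \eqref{IFE_function2} ``gives a better constant''. Your own computation shows it is the case that produces the weaker constant $\beta_2/\sqrt{\beta_1}$. Second, shape regularity gives $|T|\geqslant ch_T^2$, not $ch^2$. So your argument, like the paper's, naturally yields $h_T^{-1/2}$, and replacing it by the global $h^{-1/2}$ tacitly assumes quasi-uniformity.
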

\begin{proof}
	Without loss of generality, we consider the non-interface edge $AB$ and the interface edge $AC$, see Figure \ref{The_interface_element}. 
	
	First, if $|CD | \geqslant \dfrac{1}{2}|AC|$ and $|CE | \geqslant \dfrac{1}{2}|BC|$. For the interface edge $AC$, using the triangle inequality yields
	\begin{align}\label{Trace_proof_1}
		\|\beta \nabla \varphi\|_{L^2(AC)}\leqslant \|\beta_2 \nabla \varphi_2\|_{L^2(CD)}+\|\beta_1 \nabla \varphi_1\|_{L^2(AD)}.
	\end{align}
	For $\|\beta_2 \nabla \varphi_2\|_{L^2(CD)}$, according to the trace inequality, the inverse inequality and Lemma \ref{Norm_equivalence}, we obtain
	\begin{align}\label{Trace_proof_5}
	\|\beta_2 \nabla \varphi_2\|_{L^2(CD)}\leqslant C h_T^{-\tfrac{1}{2}}\|\beta_2 \nabla \varphi_2\|_{L^2(CDE)}\leqslant C h_T^{-\tfrac{1}{2}}\|\beta_2 \nabla \varphi_2\|_{T_2}.
	\end{align}
	For $\|\beta_1 \nabla \varphi_1\|_{L^2(AD)}$, by the trace inequality, we get
	\begin{align}\label{Trace_proof_3}
		\begin{split}
			\|\beta_1 \nabla \varphi_1\|_{L^2(AD)}\leqslant& \|\beta_1 \nabla \varphi_1\|_{L^2(AC)}\\
			\leqslant&Ch_T^{-\tfrac{1}{2}}\|\beta_1 \nabla \varphi_1\|_{T}+Ch_T^{\tfrac{1}{2}}\|\nabla(\beta_1 \nabla \varphi_1)\|_{T}\\
			\leqslant& Ch_T^{-\tfrac{1}{2}}(\|\beta_1 \nabla \varphi_1\|_{T_1}+\|\beta_1 \nabla \varphi_1\|_{T_2})+Ch_T^{\tfrac{1}{2}}\|\nabla(\beta_1 \nabla \varphi_1)\|_{T}.
		\end{split}
	\end{align}
For $\|\nabla(\beta_1 \nabla \varphi_1)\|_{T}$, it follows from the definition of the function $\varphi_{1}$ and $\varphi_2 \in P_1(T_2)$ that
\begin{align*}
		&\|\nabla(\beta_1\nabla \varphi_{1})\|_{T}^2\\
		\leqslant&C \|\nabla(\beta_1\nabla \varphi_{2})\|_{T}^2+C |\beta_1-\beta_2|\left\|\nabla\varphi_{2,x} \cdot \nabla\left(\dfrac{\ell\ell_x}{\ell_x^2+\ell_y^2} \right)\right\|_{T}^2 \\
		&+C |\beta_1-\beta_2| \left\|\dfrac{\ell\ell_x}{\ell_x^2+\ell_y^2} \nabla(\nabla\varphi_{2,x} )\right\|_{T}^2+C |\beta_1-\beta_2|\left\|\varphi_{2,x} \nabla\left(\nabla\left(\dfrac{\ell\ell_x}{\ell_x^2+\ell_y^2}\right) \right)\right\|_{T}^2\\
		&+C |\beta_1-\beta_2|\left\|\nabla\varphi_{2,y} \cdot \nabla\left(\dfrac{\ell\ell_y}{\ell_x^2+\ell_y^2} \right)\right\|_{T}^2+C |\beta_1-\beta_2| \left\|\dfrac{\ell\ell_y}{\ell_x^2+\ell_y^2} \nabla(\nabla\varphi_{2,y} )\right\|_{T}^2\\
		&+C |\beta_1-\beta_2|\left\|\varphi_{2,y} \nabla\left(\nabla\left(\dfrac{\ell\ell_y}{\ell_x^2+\ell_y^2}\right) \right)\right\|_{T}^2\\
		\leqslant& C |\beta_1-\beta_2|\left\|\varphi_{2,x} \nabla\left(\nabla\left(\dfrac{\ell\ell_x}{\ell_x^2+\ell_y^2}\right) \right)\right\|_{T}^2+ C |\beta_1-\beta_2|\left\|\varphi_{2,y} \nabla\left(\nabla\left(\dfrac{\ell\ell_y}{\ell_x^2+\ell_y^2}\right) \right)\right\|_{T}^2\\
		\leqslant&I_1+I_2.
\end{align*}
For the term $I_1$, by 
$\beta_2 \geqslant \beta_1 >0$ and Lemma \ref{Estimate_interface_function}, we have
\begin{align*}
	\begin{split}
		I_1=&C |\beta_1-\beta_2|\left\|\varphi_{2,x} \nabla\left(\nabla\left(\dfrac{\ell\ell_x}{\ell_x^2+\ell_y^2}\right) \right)\right\|_{T}^2\\
		\leqslant& C \|\beta_2\nabla \varphi_2\|_T^2 \left\| \nabla\left(\nabla\left(\dfrac{\ell\ell_x}{\ell_x^2+\ell_y^2}\right) \right)\right\|_{\infty, T}^2\\
		\leqslant& C h_T^{-2}  \|\beta_2\nabla \varphi_2\|_T^2.
	\end{split}
\end{align*}
Similarly, the term $I_2$ has the following estimate
\begin{align*}
	I_2\leqslant& C h_T^{-2}  \|\beta_2\nabla \varphi_2\|_T^2.
\end{align*}
Combing with Lemma \ref{Norm_equivalence}, we obtain
\begin{align}\label{Trace_proof_2}
	\|\nabla(\beta_1\nabla \varphi_{1})\|_{T}^2\leqslant Ch_T^{-2}\|\beta_2\nabla \varphi_2\|_T^2\leqslant Ch_T^{-2}\|\beta_2\nabla \varphi_2\|_{T_2}^2\leqslant Ch_T^{-2}\|\beta\nabla \varphi\|_{T}^2.
\end{align}
For $\|\beta_1 \nabla \varphi_1\|_{T_2}$, using $\beta_2 \geqslant \beta_1 >0$, the definition of $\varphi_1$, the inverse inequality and Lemma \ref{Estimate_interface_function} leads to
\begin{align}\label{Trace_proof_4}
	\begin{split}
		\|\beta_1 \nabla \varphi_1\|_{T_2}\leqslant&\|\beta_1 \nabla(\varphi_1-\varphi_2) \|_{T_2}+\|\beta_1 \nabla \varphi_2\|_{T_2}\\
		\leqslant&\|\beta_2 \nabla \varphi_2\|_{T_2}+\left\|\beta_1\nabla\left(\left(1-\frac{\beta_2}{\beta_1}\right)\frac{\ell(x,y)}{\sqrt{\ell_x^2+\ell_y^2}}\nabla \varphi_2 \cdot \bn
		\right) \right\|_{T_2}\\
		\leqslant&\|\beta_2 \nabla \varphi_2\|_{T_2}+\left\|\beta_2\nabla\left(
		\frac{\ell\ell_x}{\ell_x^2+\ell_y^2}\varphi_{2,x}+\frac{\ell\ell_y}{\ell_x^2+\ell_y^2}\varphi_{2,y}\right) \right\|_{T_2}\\
		\leqslant & \|\beta_2 \nabla \varphi_2\|_{T_2}+\left\|\nabla\left(\frac{\ell\ell_x}{\ell_x^2+\ell_y^2}\right) \right\|_{\infty,T_2} \|\beta_2  \varphi_{2,x}\|_{T_2}+\left\|\frac{\ell\ell_x}{\ell_x^2+\ell_y^2} \right\|_{\infty,T_2} \|\beta_2  \nabla \varphi_{2,x}\|_{T_2}\\
		&+\left\|\nabla\left(\frac{\ell\ell_y}{\ell_x^2+\ell_y^2}\right) \right\|_{\infty,T_2} \|\beta_2  \varphi_{2,y}\|_{T_2}+\left\|\frac{\ell\ell_y}{\ell_x^2+\ell_y^2} \right\|_{\infty,T_2} \|\beta_2  \nabla \varphi_{2,y}\|_{T_2}\\
		\leqslant&C\|\beta_2 \nabla \varphi_2\|_{T_2}.
	\end{split}
\end{align}
Substituting the estimates \eqref{Trace_proof_2}-\eqref{Trace_proof_4} into equality \eqref{Trace_proof_3} yields
\begin{align}
		\|\beta_1 \nabla \varphi_1\|_{L^2(AD)} \leqslant Ch_T^{-\tfrac{1}{2}}\|\beta \nabla \varphi\|_{T}.
\end{align}
Combining with the estimate \eqref{Trace_proof_1}, we get
\begin{align}
	\|\beta \nabla \varphi\|_{L^2(AC)}\leqslant Ch_T^{-\tfrac{1}{2}}\|\beta \nabla \varphi\|_{T}.
\end{align}

For the non-interface edge $AB$, according to the trace inequality, the estimates \eqref{Trace_proof_2} and \eqref{Trace_proof_4}, we derive
\begin{align*}
	\begin{split}
		\|\beta \nabla \varphi\|_{L^2(AB)}=&\|\beta_1 \nabla \varphi_1\|_{L^2(AB)}\\
		\leqslant&Ch_T^{-\tfrac{1}{2}} \|\beta_1 \nabla \varphi_1\|_T+Ch_T^{\tfrac{1}{2}}\|\nabla(\beta_1 \nabla \varphi_1)\|_T\\
		\leqslant& Ch_T^{-\tfrac{1}{2}} \|\beta_1 \nabla \varphi_1\|_T+Ch_T^{-\tfrac{1}{2}} \|\beta \nabla \varphi\|_T\\
		\leqslant& Ch_T^{-\tfrac{1}{2}}(\|\beta_1 \nabla \varphi_1\|_{T_1}+\|\beta_1 \nabla \varphi_1\|_{T_2})+Ch_T^{-\tfrac{1}{2}} \|\beta \nabla \varphi\|_T\\
		\leqslant&Ch_T^{-\tfrac{1}{2}} \|\beta \nabla \varphi\|_T.
	\end{split}
\end{align*}
To sum up, we have
\begin{align*}
	\|\beta \nabla \varphi\|_{\partial T} \leqslant&Ch_T^{-\tfrac{1}{2}}\sqrt{\beta}\|\sqrt{\beta}\nabla \varphi\|_T \leqslant Ch_T^{-\tfrac{1}{2}}\sqrt{\beta_2} \frac{\sqrt{\beta_2}}{\sqrt{\beta_1}}\|\sqrt{\beta}\nabla \varphi\|_T\leqslant Ch_T^{-\tfrac{1}{2}}\frac{\beta_2}{\sqrt{\beta_1}}\|\sqrt{\beta}\nabla \varphi\|_T.
\end{align*}

	Second, if $|CD | \leqslant \dfrac{1}{2}|AC|$ or $|CE | \leqslant \dfrac{1}{2}|BC|$, without loss of generality, we assume $|CD | \leqslant \dfrac{1}{2}|AC|$. Now, the IFE function defined on $T$ is given by Eq.\eqref{IFE_function2}.
	Similar to the estimate \eqref{Trace_proof_5},  we get the following result for $\|\nabla \varphi\|_{L^2(AD)}$:
	\begin{align}\label{Trace_proof_6}
			\|\nabla \varphi\|_{L^2(AD)}=\|\nabla \varphi_1\|_{L^2(AD)}\leqslant C h_T^{-\tfrac{1}{2}}\| \nabla \varphi_1\|_{L^2(ABED)}\leqslant C h_T^{-\tfrac{1}{2}}\| \nabla \varphi\|_{L^2(T)}.
	\end{align}
Similar to the proof of estimate \eqref{Trace_proof_2} - \eqref{Trace_proof_4}, we have
\begin{align}
		\|\nabla(\nabla\varphi_2)\|_T^2 \leqslant& C h_T^{-2}\|\nabla\varphi\|_T^2,\label{Trace_proof_7}\\
		\|\nabla\varphi_2\|_{T_1}^2 \leqslant& C \|\nabla\varphi_1\|_{T_1}^2.\label{Trace_proof_8}
\end{align}
Combining the trace inequality with the estimate \eqref{Trace_proof_7} - \eqref{Trace_proof_8} leads to
\begin{align*}
	\begin{split}
		\|\nabla \varphi\|_{L^2(CD)}=&\| \nabla \varphi_2\|_{L^2(CD)}\leqslant\| \nabla \varphi_2\|_{L^2(AC)}\\
		\leqslant& C h_T^{-\tfrac{1}{2}}\| \nabla \varphi_2\|_T+C h_T^{\tfrac{1}{2}}\|\nabla(\nabla \varphi_2)\|_T\\
		\leqslant& C h_T^{-\tfrac{1}{2}}\| \nabla \varphi_2\|_T+C h_T^{-\tfrac{1}{2}}\| \nabla \varphi\|_T\\
		\leqslant& C h_T^{-\tfrac{1}{2}}\| \nabla \varphi_2\|_{T_1}+C h_T^{-\tfrac{1}{2}}\| \nabla \varphi_2\|_{T_2}+C h_T^{-\tfrac{1}{2}}\| \nabla \varphi\|_{T}\\
        \leqslant& C h_T^{-\tfrac{1}{2}}\| \nabla \varphi_1\|_{T_1}+C h_T^{-\tfrac{1}{2}}\| \nabla \varphi_2\|_{T_2}+C h_T^{-\tfrac{1}{2}}\| \nabla \varphi\|_{T}\\
        \leqslant& C h_T^{-\tfrac{1}{2}}\| \nabla \varphi\|_{T}.
	\end{split}
\end{align*}
Therefore, we derive
\begin{align*}
		\|\nabla \varphi\|_{L^2(AC)}\leqslant\|\nabla \varphi\|_{L^2(AD)}+\|\nabla \varphi\|_{L^2(CD)} \leqslant C h_T^{-\tfrac{1}{2}}\| \nabla \varphi\|_{T}.
\end{align*}
For the non-interface edge $AB$, according to the trace inequality, the inverse inequality and Lemma \ref{Norm_equivalence}, we obtain
\begin{align*}
		\|\beta \nabla \varphi\|_{L^2(AB)}=&\|\beta_1 \nabla \varphi_1\|_{L^2(AB)}\\
		\leqslant&Ch_T^{-\tfrac{1}{2}} \|\beta_1 \nabla \varphi_1\|_T+Ch_T^{\tfrac{1}{2}}\|\nabla(\beta_1 \nabla \varphi_1)\|_T\\
		\leqslant& Ch_T^{-\tfrac{1}{2}} \|\beta_1 \nabla \varphi_1\|_T\\
		\leqslant& Ch_T^{-\tfrac{1}{2}} \|\beta_1 \nabla \varphi_1\|_{T_1}\\
		\leqslant&Ch_T^{-\tfrac{1}{2}} \|\beta \nabla \varphi\|_T.
\end{align*}
To sum up, we have
\begin{align*}
	\|\beta \nabla \varphi\|_{\partial T}\leqslant\beta_2\|\nabla \varphi\|_{\partial T} \leqslant&C\beta_2h_T^{-\tfrac{1}{2}}\|\nabla \varphi\|_T \leqslant C h_T^{-\tfrac{1}{2}} \frac{\beta_2}{\sqrt{\beta_1}}\sqrt{\beta_1}\|\nabla \varphi\|_T\leqslant Ch_T^{-\tfrac{1}{2}}\frac{\beta_2}{\sqrt{\beta_1}}\|\sqrt{\beta}\nabla \varphi\|_T.
\end{align*}
The proof of the theorem is completed.
\end{proof}

\section{Error Estimates}
In this section, we present the error estimates of $u$ under the energy norm and $L^2$ norm. Define the following norms on the space $V_h^0$:
\begin{align*}
	\|v\|_h^2=&\sum_{T \in \mathcal{T}_h}\|\sqrt{\beta}\nabla v\|_T^2+\sum_{e \in \mathcal{E}_h}\dfrac{\gamma\sigma_0}{h}\|\jump{v}\|_e^2,\\
	\trb{v}_h^2=&\|v\|_h^2+\sum_{e \in \mathcal{E}_h}\dfrac{h}{\gamma\sigma_0}\|\{\beta \nabla v \cdot \bn_e\}\|_e^2.
\end{align*}

\begin{lemma}\label{Projection_estimate_enenrgy}
	For $u \in PH^2(\Omega)$, the following estimate holds true:
	\begin{align}
		\trb{\Pi_h u -u}_h \leqslant C \dfrac{\beta_2^2}{\sqrt{\beta_1^3}}h \|u\|_{2,\Omega}^2.
	\end{align}
\end{lemma}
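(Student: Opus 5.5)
Write $\chi=\Pi_h u-u$. The estimate $\trb{\chi}_h^2 \leqslant C\frac{\beta_2^4}{\beta_1^3}h^2\|u\|_{2,\Omega}^2$ splits into three groups of terms:
\begin{align*}
\sum_{T}\|\sqrt{\beta}\nabla\chi\|_T^2,\qquad \sum_{e}\frac{\gamma\sigma_0}{h}\|\jump{\chi}\|_e^2,\qquad \sum_{e}\frac{h}{\gamma\sigma_0}\|\{\beta\nabla\chi\cdot\bn_e\}\|_e^2 .
\end{align*}
We bound each group separately, on non-interface and interface elements. The main tools are the global projection estimate (Theorem \ref{Projection_estimate_global}) for $m=0,1$ and a trace inequality applied element by element.

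\textbf{Volume term.} We have $\beta\leqslant\beta_2$ and $\|\nabla\chi\|_{T}$ is summed over $\mathcal{T}_h$. Theorem \ref{Projection_estimate_global} with $m=1$ then gives a bound $C\beta_2(\beta_2/\beta_1)^2h^2\|u\|_{2,\Omega}^2$. This is dominated by $\frac{\beta_2^4}{\beta_1^3}h^2\|u\|_{2,\Omega}^2$, since $\beta_2\geqslant\beta_1$.

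\textbf{Jump term.} Each edge $e\subset\partial T$ is handled by the standard trace inequality
\begin{align*}
\|\chi\|_e^2\leqslant C\left(h^{-1}\|\chi\|_T^2+h\|\nabla\chi\|_T^2\right).
\end{align*}
On an interface element $\chi$ is only piecewise $H^2$, but it is continuous across $\Gamma$, because both $u$ and the IFE function satisfy \eqref{Interface_condition_1}. Hence $\chi\in H^1(T)$ and the $H^1$ trace inequality still applies. Theorem \ref{Projection_estimate_global} with $m=0,1$ then gives $\|\jump{\chi}\|_e^2\lesssim (\beta_2/\beta_1)^2h^3\|u\|^2_{2}$ after summation. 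Multiplying by $\gamma/h=\beta_2^2/(\beta_1h)$ yields $C\frac{\beta_2^4}{\beta_1^3}h^2\|u\|_{2,\Omega}^2$. This term fixes the final constant.

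\textbf{Flux term; this is the main obstacle.} The flux $\beta\nabla\chi$ is not $H^1$ on an interface element, so we cannot simply apply the trace inequality to it. We split
\begin{align*}
\beta\nabla\chi=\beta\nabla(\Pi_h u-q)+\beta\nabla(q-u),
\end{align*}
where $q$ is the IFE interpolant built from $\Pi_h^2u_2$ and used in the proof of Theorem \ref{Projection_estimate_global}.

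For the first piece, $\Pi_h u-q\in\mathcal{V}_1(T)$, so the important inequality \eqref{Trace_inverse_inequality_interface} applies. It gives
\begin{align*}
\|\beta\nabla(\Pi_hu-q)\|_{\partial T}\leqslant C\frac{\beta_2}{\sqrt{\beta_1}}h^{-1/2}\sqrt{\beta_2}\,\|\nabla(\Pi_hu-q)\|_T .
\end{align*}
The factor $\|\nabla(\Pi_hu-q)\|_T$ is bounded via Theorem \ref{Projection_estimate_global} and Lemma \ref{EstimatesInT2}.

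For the second piece we work subdomain by subdomain, on $e\cap\Omega_s$. There $q-u$ restricts to $\Pi_h^2u_2-u_2$ or $\mathcal{E}^1(\Pi_h^2u_2)-u_1$. Both extend to $H^2$ on the fictitious element $T_F^s$, and by Lemma \ref{projection_error_in_Ti} and Lemma \ref{EstimatesInT2} their $H^1$ and $H^2$ errors there are $O(h)$ and $O(1)$ respectively. The trace inequality on $T_F^s$ then gives a bound of order $\beta_2\,h^{1/2}\,(\beta_2/\beta_1)\|u\|_2$.

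Multiplying the squared flux bounds by $h/\gamma=h\beta_1/\beta_2^2$, all powers of $\beta$ are absorbed into $\beta_2^4/\beta_1^3$; the bookkeeping of these coefficient powers is the delicate point. Collecting the three groups and taking square roots gives the claimed bound $C\beta_2^2\beta_1^{-3/2}h\|u\|_{2,\Omega}$.
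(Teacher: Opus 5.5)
Your volume and jump terms are handled as in the paper, including the observation that $\chi\in H^1(T)$ on interface elements. The routes differ only in the flux term.

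\textbf{How the paper treats the flux term.} The paper does not split off an IFE part. It writes $\bn_e=a\bn+b{\bm\tau}$ in the smoothly extended interface frame. The point, used implicitly, is that $u$ and $\Pi_h u$ both satisfy \eqref{Interface_condition_1}--\eqref{Interface_condition_2} exactly. Hence the scalar functions $\beta\nabla\chi\cdot\bn$ and $\nabla\chi\cdot{\bm\tau}$ are continuous across $\Gamma$, and so lie in $H^1(T)$. The ordinary trace inequality on the whole shape-regular $T$ then gives $\sum_e\|\{\beta\nabla\chi\cdot\bn_e\}\|_e^2\leqslant C\beta_2^2\sum_T h^{-1}\big(\|\nabla\chi\|_T^2+h^2|\chi|_{2,T}^2\big)$, with the $H^2$ seminorm taken piecewise on $T_1,T_2$. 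Theorem \ref{Projection_estimate_global} with $m=1,2$ finishes. So the obstruction you name is real only for the vector flux; its components in the interface frame are $H^1$.

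The paper's route needs neither \eqref{Trace_inverse_inequality_interface} nor fictitious elements. Your split (IFE part via \eqref{Trace_inverse_inequality_interface}, interpolation error on $T_F^s$) has one genuine advantage: it uses Theorem \ref{Projection_estimate_global} only for $m\leqslant 1$, so it never needs a broken $H^2$ bound for the $L^2$ projection $P_hu$.

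\textbf{The gap in your second flux piece.} On $e\cap\Omega_1$ you need $\|\nabla(\mathcal{E}^1(\Pi_h^2u_2)-u_1)\|_{T_F^1}\leqslant C(\beta_2/\beta_1)h\|u\|_{2,\Omega}$ after summation. You cite Lemma \ref{EstimatesInT2} for this, but that lemma controls $\Pi_h^1u_1-\mathcal{E}^1(\Pi_h^2u_2)$ only on $T\cap\Omega_1$. When \eqref{IFE_function1} is used, $T\cap\Omega_1$ can be an arbitrarily thin sliver along $AB$. A bound on that sliver cannot be transferred to $e\cap\Omega_1$ with a cut-independent constant, which is exactly why you moved to $T_F^1$.

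The fix is to re-prove Lemma \ref{EstimatesInT2} on $T_F^1$. Take $M$ to be the foot point $P_\Gamma(0,\xi(X))$; every term in that proof is already bounded by $T_F^s$-norms through the Frenet coordinates. The second derivatives of $\mathcal{E}^1(\Pi_h^2u_2)$ are pointwise bounded by $C(\beta_2/\beta_1)|\nabla\Pi_h^2u_2|$, which handles the $H^2$ part.

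Two further points need a line each. First, state that $T\subset T_F$. Second, justify the trace inequality from $T_F^s$ onto the straight segment $e\cap\Omega_s$, which lies inside $T_F^s$ rather than on its boundary; working on the rectangle $\widehat T_F^s$ is one way.

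With these additions your bookkeeping of the $\beta$-powers is correct and yields $C\beta_2^2\beta_1^{-3/2}h\|u\|_{2,\Omega}$. The square on $\|u\|_{2,\Omega}$ in the statement is a typo; your unsquared form is what both arguments actually prove.
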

\begin{proof}
	For simplicity, we take $w=\Pi_h u -u$. Based on the definition of $\trb{\cdot}_h$, we analyze the each term in $\trb{w}_h$.
    
    Using Theorem \ref{Projection_estimate_global} leads to
    \begin{align*}
    	\sum_{T \in \mathcal{T}_h}\|\sqrt{\beta}\nabla w\|_T^2 \leqslant \beta_2\sum_{T \in \mathcal{T}_h}\|\nabla w\|_T^2 \leqslant C \left(\dfrac{\beta_2^3}{\beta_1^{2}}\right) h^2 \|u\|_{2,\Omega}.
    \end{align*}
    Since $w|_T \in H^1(T)$, by the trace inequality and Theorem \ref{Projection_estimate_global}, we have
    \begin{align*}
    	\begin{split}
    			&\sum_{e \in \mathcal{E}_h} \|\jump{w}\|_e^2 \leqslant C \sum_{T \in \mathcal{T}_h} \|w|_T\|_{\partial T}^2\\ 
    			\leqslant& C \sum_{T \in \mathcal{T}_h} \left( h_T^{-1}\|w\|_{T}^2+h_T\|\nabla w\|_{T}^2\right) \leqslant C\left(\dfrac{\beta_2^2}{\beta_1^2}\right) h^3\|u\|_{2,\Omega}^2.
    	\end{split}  
    \end{align*}
    For the normal vector $\bn_e$ on edge $e \in \mathcal{E}_h$, set $\bn_e=a\bn+b{\bm{\tau}}$. The third term is rewritten as 
    \begin{align*}
    	\begin{split}
    			\sum_{e \in \mathcal{E}_h}\|\{\beta \nabla v \cdot \bn_e\}\|_e^2\leqslant	C \sum_{T \in \mathcal{T}_h}\|\beta \nabla v \cdot \bn_e\|_{\partial T}^2
    		\leqslant C\sum_{T \in \mathcal{T}_h}\left(\|\beta \nabla v \cdot \bn\|_{\partial T}^2+\|\beta \nabla v \cdot {\bm{\tau}}\|_{\partial T}^2\right).
    	\end{split}
    \end{align*}
 Therefore, it follows from the trace inequality that 
 \begin{align*}
 	\begin{split}
 		&\sum_{e \in \mathcal{E}_h}\|\{\beta \nabla w \cdot \bn_e\}\|_e^2\\
 		\leqslant&C\left(\sum_{T \in \mathcal{T}_h}h_T^{-1}\|\beta \nabla w \cdot \bn\|_T^2+h_T\|\nabla(\beta \nabla w \cdot \bn)\|_T^2\right)\\
 		&+C\beta_2^2 \left(\sum_{T \in \mathcal{T}_h}h_T^{-1}\| \nabla w \cdot {\bm{\tau}}\|_T^2+h_T\|\nabla( \nabla w \cdot {\bm{\tau}})\|_T^2\right)\\
 		\leqslant& C \beta_2^2 \sum_{T \in \mathcal{T}_h}h_T^{-1} \left( \|\nabla w
 		\|_T^2+h_T^2\|\nabla(\nabla w)
 		\|_T^2 \right)\\
 		\leqslant& C\left(\dfrac{\beta_2^4}{\beta_1^2}\right) h \|u\|_{2,\Omega}^2.
 	\end{split}
 \end{align*}  
 To sum up, we get
 \begin{align*}
 	\trb{u-\Pi_h u}_h^2 \leqslant& C \left(\dfrac{\beta_2^3}{\beta_1^2}\right) h^2 \|u\|_{2,\Omega}^2+ C \dfrac{\sigma_0}{h} \left(\dfrac{\beta_2^4}{\beta_1^3}\right)h^3 \|u\|_{2,\Omega}^2
 	+Ch\dfrac{h}{\sigma_0}\left(\dfrac{\beta_2^2}{\beta_1}\right) h \|u\|_{2,\Omega}^2\\
 	\leqslant& C \left(\dfrac{\beta_2^4}{\beta_1^3}\right)h^2 \|u\|_{2,\Omega}^2.
 \end{align*} 
Thus, the proof of the lemma is completed.
\end{proof}

\begin{lemma}\label{Continuity_bilinear_form}
For the bilinear form $a_h(u_h,v_h)$,  we have 
	\begin{align}
		a_h(u_h,v_h)\leqslant \trb{u_h}_h \cdot \trb{v_h}_h.
	\end{align}
\end{lemma}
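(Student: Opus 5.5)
The plan is to bound $a_h(u_h,v_h)$ term by term with the Cauchy--Schwarz inequality, using the weights built into $\trb{\cdot}_h$, and then recombine with a discrete Cauchy--Schwarz inequality over the finite collection of elements and edges. Write $\sigma=\sigma_0\gamma$. The consistency terms are to be read in the form they take after integrating by parts elementwise: the flux average $\{\beta\nabla u_h\cdot\bn_e\}_e$ paired against the jump $\jump{v_h}_e$, and symmetrically with $u_h$ and $v_h$ exchanged. This is the form that the norm $\trb{\cdot}_h$ controls, since it contains $\|\{\beta\nabla v\cdot\bn_e\}\|_e$ and $\|\jump{v}\|_e$ but not $\{v\}_e$.

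\textbf{Step 1 (volume term).} On each element,
\[
(\beta\nabla u_h,\nabla v_h)_T\le \|\sqrt\beta\nabla u_h\|_T\,\|\sqrt\beta\nabla v_h\|_T .
\]
\textbf{Step 2 (consistency terms).} On each edge, insert the weight $\sqrt{h/\sigma}\cdot\sqrt{\sigma/h}$:
\[
\langle\{\beta\nabla u_h\cdot\bn_e\},\jump{v_h}\rangle_e\le \Big(\tfrac{h}{\sigma}\Big)^{1/2}\|\{\beta\nabla u_h\cdot\bn_e\}\|_e\,\Big(\tfrac{\sigma}{h}\Big)^{1/2}\|\jump{v_h}\|_e .
\]
The term with $u_h$ and $v_h$ exchanged is bounded the same way.

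\textbf{Step 3 (penalty term).} Bound $\tfrac{\sigma}{h}\langle\jump{u_h},\jump{v_h}\rangle_e$ by $\big(\tfrac{\sigma}{h}\big)^{1/2}\|\jump{u_h}\|_e\cdot\big(\tfrac{\sigma}{h}\big)^{1/2}\|\jump{v_h}\|_e$.

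\textbf{Step 4 (recombination).} Summing over elements and edges and collecting everything into a single Euclidean inner product $\sum_k a_kb_k$ gives $a_h(u_h,v_h)\le(\sum a_k^2)^{1/2}(\sum b_k^2)^{1/2}$. Both the volume and the penalty pieces appear exactly as in $\|\cdot\|_h$, and the flux pieces appear with the weight $h/\sigma$ used in $\trb{\cdot}_h$.

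The main obstacle is Step 4 with constant exactly $1$. For each $u_h$ the jump quantity $(\sigma/h)^{1/2}\|\jump{u_h}\|_e$ is used twice: once in Step 2, when $u_h$ sits in the jump slot, and once in Step 3. So a naive collection gives $\sum a_k^2=\|u_h\|_h^2+\sum_e\tfrac{\sigma}{h}\|\jump{u_h}\|_e^2\le 2\trb{u_h}_h^2$, and hence only the factor $2$.

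To reach constant $1$, I would first try grouping each edge contribution as a $3\times3$ bilinear form in the vectors
\[
\Big(\,\cdot\,,\ (\sigma/h)^{1/2}\|\jump{\cdot}\|_e,\ (h/\sigma)^{1/2}\|\{\cdot\}\|_e\Big),
\]
and checking whether its matrix has operator norm at most $1$. The entries are $0,1,1$ off-diagonal plus $1$ on the jump--jump diagonal. This matrix has norm larger than $1$: taking zero gradients and unit jump and flux quantities gives left side $3$ against right side $2$. Therefore the estimate with constant $1$ cannot follow from Cauchy--Schwarz alone.

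If no sign cancellation specific to $V_h$ is available, the honest conclusion is that the stated inequality must be read with a generic constant $C$ independent of $h$, $\beta_1,\beta_2$ and the interface position. The argument above gives $C=2$, by taking $\sum b_k^2\le2\trb{v_h}_h^2$ in the same way as for $u_h$ and multiplying the two square roots. This is the form I would record and use in the subsequent error analysis.
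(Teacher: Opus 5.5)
The paper states this lemma without proof, so there is no argument of its own to compare with. Your Cauchy--Schwarz bookkeeping is the standard argument and it is correct: it gives $a_h(u_h,v_h)\leqslant 2\trb{u_h}_h\trb{v_h}_h$.

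There is one small slip. The collected sum for $u_h$ is $\trb{u_h}_h^2+\sum_e\frac{\gamma\sigma_0}{h}\|\jump{u_h}\|_e^2$, not $\|u_h\|_h^2+\cdots$, because the flux piece from Step 2 also belongs to it. The bound by $2\trb{u_h}_h^2$ is unaffected. Grouping instead through the matrix $\bigl(\begin{smallmatrix}1&1\\1&0\end{smallmatrix}\bigr)$ would even give the constant $(1+\sqrt5)/2$.

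Your reading of the consistency terms, with the flux average paired against the jump, is the one the paper itself uses in the proof of Lemma \ref{Coercity_bilinear}. The displayed definition of $a_h$ has jump and average interchanged.

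You can drop the conditional ``if no sign cancellation specific to $V_h$ is available'': the inequality with constant $1$ is false. Your matrix test, with zero gradients but nonzero fluxes, is not realizable by actual functions. The decisive observation is that for $u_h=v_h=v$ the volume and penalty parts cancel exactly, so that
\[
a_h(v,v)-\trb{v}_h^2=-2\sum_{e\in\mathcal{E}_h}\langle\{\beta\nabla v\cdot\bn_e\}_e,\jump{v}_e\rangle_e-\sum_{e\in\mathcal{E}_h}\frac{h}{\gamma\sigma_0}\|\{\beta\nabla v\cdot\bn_e\}_e\|_e^2 .
\]
Take $v=p+tq$ with the following choices:
\begin{itemize}
\item $p$ is a continuous piecewise linear hat function supported in non-interface elements away from $\partial\Omega$;
\item $q$ is the indicator of one interior non-interface element $K$.
\end{itemize}
Both lie in $V_h^0$. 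Since $\jump{p}_e=0$ and $\nabla q=0$, the right-hand side equals $-2tS-Y$, where $S=\sum_e\langle\{\beta\nabla p\cdot\bn_e\}_e,\jump{q}_e\rangle_e$ and $Y\geqslant 0$ does not depend on $t$.

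The quantity $S$ is nonzero in general. For example, after translation let $K$ have vertices $(0,0),(h,0),(0,h)$ in the uniform mesh whose squares are cut by diagonals parallel to $x+y=0$, and let $p$ be the hat function at $(h,0)$. The $K$-side contributions integrate to zero over $\partial K$, and the two neighbours containing $(h,0)$ contribute $-1$ each, so $S=-\beta$. Choosing $t$ with the sign of $-S$ and $|t|>Y/(2|S|)$ gives $a_h(v,v)>\trb{v}_h^2$. This is precisely the double use of the jump that you identified.

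So the lemma must carry a generic constant, and your $C=2$ version is the right statement to record. Your argument uses no inverse or trace inequality, so it applies equally when the first argument is the non-discrete $u-\Pi_h u$. That is the case the standard energy-error argument needs, and there any constant independent of $h$, $\beta_1$, $\beta_2$ and the interface position suffices.
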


\begin{lemma}\label{Coercity_bilinear}
	For any sufficiently large constant $\sigma_0 >0$, the following estimate holds true
	\begin{align}
		a_h(v_h,v_h)\geqslant \frac{1}{4}  \trb{v_h}_h, \quad \forall \, v_h \in V_h^0.
	\end{align}
\end{lemma}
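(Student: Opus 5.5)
The plan is the standard SIPDG coercivity argument, with the IFE trace inequality \eqref{Trace_inverse_inequality_interface} replacing the polynomial trace-inverse inequality on interface elements. The goal is coercivity in $\|\cdot\|_h$ first, and then in $\trb{\cdot}_h$ via the same trace bound. (I read the claimed bound as $a_h(v_h,v_h)\geqslant \frac14\trb{v_h}_h^2$, as homogeneity requires.)

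\textbf{Step 1: expand the form.} Take $v_h\in V_h^0$ and write
\begin{align*}
a_h(v_h,v_h)=\sum_{T}\|\sqrt{\beta}\nabla v_h\|_T^2-2\sum_{e}\langle \{\beta\nabla v_h\cdot\bn_e\}_e,\jump{v_h}_e\rangle_e+\sum_e\frac{\sigma_0\gamma}{h}\|\jump{v_h}_e\|_e^2 .
\end{align*}
Here I use the consistent interpretation of the consistency terms, namely average of flux against jump of $v_h$. If the paper's literal ordering is intended, the same estimates apply with the roles of jump and average exchanged.

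\textbf{Step 2: bound the flux traces elementwise.}
\begin{itemize}
\item On a non-interface element, $\nabla v_h$ is constant. The standard trace and inverse inequalities give $\|\beta\nabla v_h\|_{\partial T}\leqslant C\sqrt{\beta}\,h^{-1/2}\|\sqrt\beta\nabla v_h\|_T$, and $\sqrt\beta\leqslant \beta_2/\sqrt{\beta_1}$ because $\beta_2\geqslant\beta_1$.
\item On an interface element, \eqref{Trace_inverse_inequality_interface} gives the same bound with constant $C\beta_2/\sqrt{\beta_1}$.
\end{itemize}
Since $\gamma=\beta_2^2/\beta_1$, both cases combine, after summing over edges with each element counted a bounded number of times, into
\begin{align*}
\sum_e \frac{h}{\gamma}\|\{\beta\nabla v_h\cdot\bn_e\}\|_e^2\leqslant C_{tr}\sum_T\|\sqrt\beta\nabla v_h\|_T^2 ,
\end{align*}
with $C_{tr}$ independent of $h$, $\beta_1,\beta_2$ and the interface position.

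\textbf{Step 3: absorb the cross term.} By Cauchy--Schwarz and Young,
\begin{align*}
2\Big|\sum_e\langle\{\beta\nabla v_h\cdot\bn_e\},\jump{v_h}\rangle_e\Big|\leqslant \epsilon\sum_e\frac h\gamma\|\{\beta\nabla v_h\cdot\bn_e\}\|_e^2+\frac1\epsilon\sum_e\frac\gamma h\|\jump{v_h}\|_e^2 .
\end{align*}
Choose $\epsilon=1/(2C_{tr})$. Then
\begin{align*}
a_h(v_h,v_h)\geqslant \tfrac12\sum_T\|\sqrt\beta\nabla v_h\|_T^2+(\sigma_0-2C_{tr})\sum_e\frac\gamma h\|\jump{v_h}\|_e^2 .
\end{align*}
For $\sigma_0\geqslant 4C_{tr}$ the jump coefficient is at least $\sigma_0/2$, so $a_h(v_h,v_h)\geqslant\frac12\|v_h\|_h^2$.

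\textbf{Step 4: pass to $\trb{\cdot}_h$.} Using Step 2 again,
\begin{align*}
\sum_e\frac{h}{\gamma\sigma_0}\|\{\beta\nabla v_h\cdot\bn_e\}\|_e^2\leqslant \frac{C_{tr}}{\sigma_0}\|v_h\|_h^2\leqslant \|v_h\|_h^2
\end{align*}
once $\sigma_0\geqslant C_{tr}$. Hence $\trb{v_h}_h^2\leqslant 2\|v_h\|_h^2$, and therefore $a_h(v_h,v_h)\geqslant\frac14\trb{v_h}_h^2$.

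\textbf{Main obstacle.} The delicate step is Step 2: it needs a uniform constant in the IFE trace inequality and the correct $\beta$-scaling. The penalty weight $\gamma$ has to dominate $\beta_2^2/\beta_1$ on interface elements, and this is exactly why $\gamma=\beta_2^2/\beta_1$ was chosen. I would verify that the non-interface elements also fit within that scaling; they do, because $\beta\leqslant\beta_2\leqslant\beta_2^2/\beta_1$.
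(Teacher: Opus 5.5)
Your proof is correct and is essentially the paper's argument. You expand $a_h(v_h,v_h)$ with the flux average paired against $\jump{v_h}_e$, absorb the cross term by Cauchy--Schwarz and Young via the trace bound \eqref{Trace_inverse_inequality_interface} (whose constant $\beta_2/\sqrt{\beta_1}$ is exactly compensated by $\gamma=\beta_2^2/\beta_1$), and reuse the same bound for the $\{\beta\nabla v_h\cdot\bn_e\}_e$ part of $\trb{v_h}_h^2$. Your explicit treatment of non-interface elements fills a step the paper leaves implicit.

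Do, however, drop the aside that the literal ordering $\langle\jump{\beta\nabla v_h\cdot\bn_e}_e,\{v_h\}_e\rangle_e$ in the definition of $a_h$ works ``with the roles exchanged'', because that cross term cannot be absorbed by the jump penalty. Concretely, take $\beta_1=\beta_2$, a nonzero continuous piecewise linear $v_h\in V_h^0$, and $\bn_e$ exterior to the element whose trace comes first in $\jump{\cdot}_e$. Elementwise integration by parts then gives $\sum_e\langle\jump{\beta\nabla v_h\cdot\bn_e}_e,\{v_h\}_e\rangle_e=\sum_T\|\sqrt{\beta}\nabla v_h\|_T^2$, hence $a_h(v_h,v_h)=-\sum_T\|\sqrt{\beta}\nabla v_h\|_T^2<0$. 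So that ordering must be a typo for the standard pairing that you and the paper's proof actually use.
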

\begin{proof}
	By the definition of the bilinear form $a_h(v_h,v_h)$, we have
	\begin{align}\label{Coercity_bilinear_proof_1}
		\begin{split}
			a_h(v_h,v_h)=\sum_{T \in \mathcal{T}_h}\|\sqrt{\beta}\nabla v_h\|_T^2-2\sum_{e \in \mathcal{E}_h}\langle \{\beta \nabla v_h \cdot \bn_e\}_e, \jump{v_h}_e \rangle_{e}
			+\sum_{e \in \mathcal{E}_h}\dfrac{\gamma\sigma_0}{h}\|\jump{v_h}_e\|_e^2.
		\end{split}
	\end{align}
	For the second term, using the Cauchy-Schwarz inequality and Young inequality leads to
	\begin{align}\label{Coercity_bilinear_proof_2}
		\begin{split}
			&\sum_{e \in \mathcal{E}_h}\langle \{\beta \nabla v_h \cdot \bn_e\}_e, \jump{v_h}_e \rangle_{e}
			\leqslant \sum_{e \in \mathcal{E}_h} \|\{\beta \nabla v_h \cdot \bn_e\}_e\|_e \|\jump{v_h}_e\|_e\\
			\leqslant&\left( \sum_{e \in \mathcal{E}_h} h\|\{\beta \nabla v_h \cdot \bn_e\}_e\|_e^2 \right)^{\tfrac{1}{2}} \left( \sum_{e \in \mathcal{E}_h} \dfrac{1}{h}\|\jump{v_h}_e\|_e^2 \right)^{\tfrac{1}{2}}\\
			\leqslant&\sum_{e \in \mathcal{E}_h} \dfrac{h}{2 \alpha \gamma}  \|\{\beta \nabla v_h \cdot \bn_e\}_e\|_e^2 + \sum_{e \in \mathcal{E}_h} \dfrac{\alpha \gamma}{2h} \|\jump{v_h}_e\|_e^2.
		\end{split}
	\end{align}
	As to the first term in the above inequality, it follows from the estimate \eqref{Trace_inverse_inequality_interface} that 
	\begin{align}\label{Coercity_bilinear_proof_3}
		\begin{split}
			\dfrac{1}{2 \alpha \gamma}\sum_{e \in \mathcal{E}_h}h\|\{\beta \nabla v_h \cdot \bn_e\}_e\|_e^2\leqslant&  \dfrac{1}{2 \alpha \gamma}\sum_{T \in \mathcal{T}_h}h\|\beta \nabla v_h \cdot \bn_e\|_{\partial T}^2\\
			\leqslant& \dfrac{1}{2 \alpha \gamma}\sum_{T \in \mathcal{T}_h}h\|\beta \nabla v_h\|_{\partial T}^2\\
			\leqslant& \dfrac{C^2}{2 \alpha \gamma} \sum_{T \in \mathcal{T}_h} \dfrac{\beta_2^2}{\beta_1}\|\sqrt{\beta} \nabla v_h\|_{ T}^2 \\
			\leqslant & \dfrac{C^2}{2 \alpha} \sum_{T \in \mathcal{T}_h} \|\sqrt{\beta} \nabla v_h\|_{T}^2.
		\end{split}
	\end{align}
	Substituting inequalities \eqref{Coercity_bilinear_proof_2}-\eqref{Coercity_bilinear_proof_3} into \eqref{Coercity_bilinear_proof_1} yields 
	\begin{align}
		a_h(v_h,v_h)\geqslant \sum_{T \in \mathcal{T}_h}\left(1-\dfrac{C^2}{\alpha}\right) \|\sqrt{\beta} \nabla v_h\|_{T}^2 +\sum_{e \in \mathcal{E}_h} \left( \dfrac{\gamma(\sigma_0-\alpha)}{h}\right) \|\jump{v_h}_e\|_e^2.
	\end{align}
	In particular, taking $\alpha=\dfrac{1}{2}\sigma_0$ and  $\sigma_0 \geqslant 4C^2+1$ leads to
	\begin{align}
			a_h(v_h,v_h)\geqslant \dfrac{1}{2} \sum_{T \in \mathcal{T}_h} \|\sqrt{\beta} \nabla v_h\|_{T}^2 +\dfrac{1}{2} \sum_{e \in \mathcal{E}_h} \dfrac{\gamma \sigma_0}{h} \|\jump{v_h}_e\|_e^2.
	\end{align}
	According to the inequality \eqref{Coercity_bilinear_proof_3}, we have
	\begin{align}
		\sum_{e \in \mathcal{E}_h}\dfrac{h}{\sigma_0 \gamma} \|\{\beta \nabla v_h \cdot \bn_e\}_e\|_e^2 \leqslant \dfrac{C^2}{\sigma_0} \sum_{T \in \mathcal{T}_h} \|\sqrt{\beta}\nabla v_h\|_T^2\leqslant \dfrac{1}{4} \sum_{T \in \mathcal{T}_h} \|\sqrt{\beta}\nabla v_h\|_T^2.
	\end{align}
Therefore, we obtain
\begin{align}
	\begin{split}
			a_h(v_h,v_h)=&\sum_{T \in \mathcal{T}_h}\|\sqrt{\beta}\nabla v_h\|_T^2-2\sum_{e \in \mathcal{E}_h}\langle \{\beta \nabla v_h \cdot \bn_e\}_e, \jump{v_h}_e \rangle_{e}
		+\sum_{e \in \mathcal{E}_h}\dfrac{\gamma\sigma_0}{h}\|\jump{v_h}_e\|_e^2\\
		\geqslant &\dfrac{1}{2} \sum_{T \in \mathcal{T}_h} \|\sqrt{\beta} \nabla v_h\|_{T}^2 +\dfrac{1}{2} \sum_{e \in \mathcal{E}_h} \dfrac{\gamma \sigma_0}{h} \|\jump{v_h}_e\|_e^2\\
		\geqslant & \dfrac{1}{4} \sum_{T \in \mathcal{T}_h} \|\sqrt{\beta} \nabla v_h\|_{T}^2+\dfrac{1}{2} \sum_{e \in \mathcal{E}_h} \dfrac{\gamma \sigma_0}{h} \|\jump{v_h}_e\|_e^2+\sum_{e \in \mathcal{E}_h}\dfrac{h}{\sigma_0 \gamma} \|\{\beta \nabla v_h \cdot \bn_e\}_e\|_e^2\\
		\geqslant & \dfrac{1}{4} \trb{v_h}_h^2.
	\end{split}
\end{align}
The proof of the above lemma is completed.
\end{proof}

\begin{theorem}
	Assume $u \in PH^2(\Omega)$ is the solution of the interface problem \eqref{Elliptic_problem} - \eqref{Interface_condition_2} and the $\sigma_0$ is large enough such that Lemma \ref{Coercity_bilinear} to hold, then we have the following result
	\begin{align}
		\trb{u-u_h}_h \leqslant C \left(\dfrac{\beta_2^2}{\sqrt{\beta_1^3}}\right)h\|u\|_{2,\Omega}. 
	\end{align}
	 and 
	 \begin{align}
	 	\|u-u_h\|_{L^2(\Omega)}\leqslant C \left( \dfrac{\beta_2}{\beta_1} \right)^4 h^2 \|u\|_{2,\Omega}.
	 \end{align}
\end{theorem}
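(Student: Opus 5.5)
The energy estimate will follow from the standard Strang-type argument for SIPDG, combining consistency, coercivity (Lemma \ref{Coercity_bilinear}), continuity (Lemma \ref{Continuity_bilinear_form}) and the projection estimate (Lemma \ref{Projection_estimate_enenrgy}).

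\textbf{Consistency.} First I verify that the scheme \eqref{DGscheme} is consistent: $a_h(u,v_h)=L_h(v_h)$ for all $v_h\in V_h^0$. For each $T$, integrate $-\nabla\cdot(\beta\nabla u)=f$ by parts on $T_1$ and $T_2$ separately. The contributions on $\Gamma\cap T$ cancel because of \eqref{Interface_condition_1}--\eqref{Interface_condition_2}. Since $u\in PH^2(\Omega)$, both $\jump{u}_e$ and $\jump{\beta\nabla u\cdot\bn_e}_e$ vanish on interior edges, so the symmetric and penalty terms on interior edges disappear. On boundary edges $u=g$, which reproduces $L_h$. This gives Galerkin orthogonality $a_h(u-u_h,v_h)=0$.

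\textbf{Energy estimate.} Split $u-u_h=(u-\Pi_h u)+(\Pi_h u-u_h)$ and set $\chi_h=\Pi_h u-u_h$. Since $u_h=\Pi_h g$ on $\partial\Omega$, $\chi_h$ vanishes on boundary edges up to the boundary projection, so $\chi_h\in V_h^0$, or it can be treated as such with a harmless boundary term. Coercivity and orthogonality give
\[
\tfrac14\trb{\chi_h}_h^2\le a_h(\chi_h,\chi_h)=a_h(\Pi_h u-u,\chi_h)\le \trb{\Pi_h u-u}_h\trb{\chi_h}_h .
\]
Hence $\trb{\chi_h}_h\le 4\trb{u-\Pi_h u}_h$. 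The triangle inequality with Lemma \ref{Projection_estimate_enenrgy} then yields the factor $\beta_2^2/\sqrt{\beta_1^3}$.

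\textbf{Continuity step.} Lemma \ref{Continuity_bilinear_form} is stated only for discrete functions. Applying it with one argument equal to $\Pi_h u-u$ requires the $\trb{\cdot}_h$ norm to include the flux-average term. It does, so the Cauchy--Schwarz argument behind the lemma extends to $H^2$-broken functions.

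\textbf{$L^2$ estimate.} This uses the Aubin--Nitsche duality argument. Let $z\in PH^2(\Omega)$ solve the dual interface problem $-\nabla\cdot(\beta\nabla z)=u-u_h$ with $z=0$ on $\partial\Omega$, and assume the regularity bound $\|z\|_{2,\Omega}\le C\beta_1^{-1}\|u-u_h\|$. By symmetry of $a_h$ and consistency,
\[
\|u-u_h\|^2=a_h(u-u_h,z)=a_h(u-u_h,z-\Pi_h z).
\]
Continuity then bounds this by $\trb{u-u_h}_h\,\trb{z-\Pi_h z}_h$. Applying the energy estimate to each factor gives
\[
\|u-u_h\|^2\le C\Bigl(\tfrac{\beta_2^2}{\beta_1^{3/2}}\Bigr)^2h^2\|u\|_{2,\Omega}\,\|z\|_{2,\Omega}.
\]
Combining with the regularity bound produces a power of $\beta_2/\beta_1$. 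The exponent $4$ in the statement is a generous bound that absorbs $\beta_2^4/\beta_1^4$ after normalization; I would track exponents loosely and accept any bound of this form.

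\textbf{Main obstacle.} I expect the delicate points to be the following.
\begin{itemize}
\item Justifying consistency on interface elements, where $u_h$ is non-polynomial but $u$ is exact. This is fine because the integration by parts involves only $u$.
\item Handling the boundary projection $u_h=\Pi_h g$, so that $\chi_h$ is a legitimate test function.
\item Ensuring the dual regularity constant's dependence on $\beta$ is compatible with the claimed power.
\end{itemize}
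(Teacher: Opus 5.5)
The paper gives no proof of this theorem beyond an appeal to ``standard arguments.'' Your outline is exactly that standard C\'ea/Aubin--Nitsche route, but one step fails as written. Galerkin orthogonality $a_h(u-u_h,v_h)=0$ is only available for $v_h\in V_h^0$, yet you apply it to $\Pi_h z$ and to $\chi_h=\Pi_h u-u_h$. Because $\Pi_h$ is an elementwise $L^2$ projection, $\Pi_h z$ does not vanish on boundary edges even though $z=0$ on $\partial\Omega$, so $a_h(u-u_h,\Pi_h z)=0$ is unjustified. Likewise, $\chi_h\in V_h^0$ only if the boundary datum ``$\Pi_h g$'' is read as the trace of $\Pi_h u$, which is not computable from $g$. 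This cannot be waved off as harmless.

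The cleanest resolution uses the fact that the term $\frac{\sigma_0\gamma}{h}\langle v,g\rangle_e$ in $L_h$ vanishes identically on $V_h^0$. So the scheme is evidently meant as Nitsche-type SIPDG tested against all of $V_h$. Consistency then holds for every $v_h\in V_h$, since on boundary edges $\jump{u}_e=g$ reproduces both boundary terms of $L_h$. The proof of Lemma \ref{Coercity_bilinear} never uses $v_h|_{\partial\Omega}=0$, so coercivity holds on $V_h$, and both $\chi_h$ and $\Pi_h z$ become admissible test functions. If you insist on the strong boundary condition instead, you must use an optimal approximant of $z$ lying in $V_h^0$ and bound a discrete lifting of $\chi_h|_{\partial\Omega}$ explicitly.

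You should also work with the bilinear form used in the proof of Lemma \ref{Coercity_bilinear}, where the flux average is paired with $\jump{v}$. The version displayed in the algorithm has averages and jumps interchanged and is not consistent. Your claim that the symmetric interior-edge term disappears is true only for the corrected form.

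Your justification of consistency on interface elements is also wrong, although the conclusion holds. After using $\jump{\beta\nabla u\cdot\bn}_\Gamma=0$, the contribution of $\Gamma\cap T$ is $\int_{\Gamma\cap T}\beta_1\nabla u_1\cdot\bn\,(v_{h,1}-v_{h,2})\,ds$. This vanishes only because the test function is itself continuous across the \emph{exact} interface; it is not something that ``involves only $u$.'' Here continuity holds because the correction term in \eqref{IFE_function1}--\eqref{IFE_function2} carries the factor $\ell$, which is precisely the benefit of building the IFE functions on the true $\Gamma$. The same continuity of $u_h$ is what gives the adjoint identity $a_h(u-u_h,z)=\|u-u_h\|^2$.

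Finally, the $\beta$-explicit regularity bound $\|z\|_{2,\Omega}\le C\beta_1^{-1}\|u-u_h\|$ is an extra hypothesis the paper does not supply, so it must be stated or cited. With it, you should bound $\trb{z-\Pi_h z}_h$ by Lemma \ref{Projection_estimate_enenrgy} applied to $z$, not by the energy estimate. This gives the factor $\bigl(\beta_2^2\beta_1^{-3/2}\bigr)^2\beta_1^{-1}=(\beta_2/\beta_1)^4$ exactly. There is no slack in the exponent, so ``accepting any bound of this form'' would not prove the stated estimate.
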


\section{The Numerical Experiments}
In this section, several numerical examples are presented to verify the theoretical results established in the previous sections. Furthermore, we examine the numerical performance of the proposed immersed SIPDG method for various computational examples.

\begin{example}\label{li1}{\rm({\textbf{Convergence verification}})}
	Consider the interface problems in the square domain $[-1,1]\times[-1,1]$. The interface function is as follows:
	$$
	x^2+y^2=\frac{1}{3}.
	$$
	Take two subdomains as $\Omega_1=\left\{(x,y)\in \Omega: x^2+y^2<\dfrac{1}{3}\right\}$ and  $\Omega_2=\left\{(x,y)\in \Omega: x^2+y^2>\dfrac{1}{3} \right\}$. The exact solution is
	$$
	u = \left\{\begin{array}{lc}
		\dfrac{1}{\beta_1}\cos(\pi (x^2+y^2)), & (x,y)\in \Omega_1 \\
		\dfrac{1}{\beta_2}\cos(\pi (x^2+y^2))+\dfrac{1}{2}(\dfrac{1}{\beta_1}-\dfrac{1}{\beta_2}), & (x,y)\in \Omega_2
	\end{array}
	\right..
	$$
\end{example}

\begin{figure}[H]
	\centering
	\begin{minipage}[t]{0.33\linewidth}
		\centering
		\includegraphics[width=1.1\linewidth]{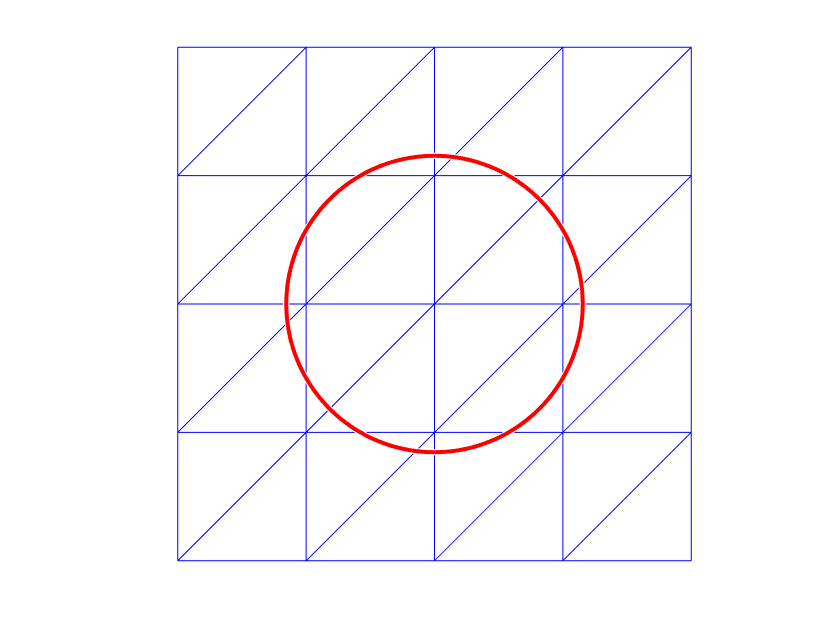}
	\end{minipage}%
	\begin{minipage}[t]{0.33\linewidth}
		\centering
		\includegraphics[width=1.1\linewidth]{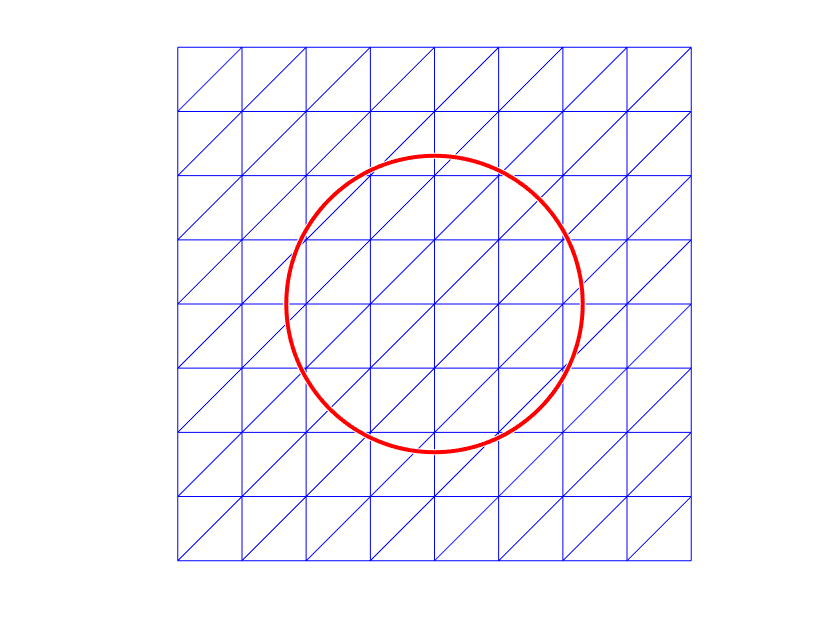}
	\end{minipage}
	\begin{minipage}[t]{0.33\linewidth}
		\centering \includegraphics[width=1.1\linewidth]{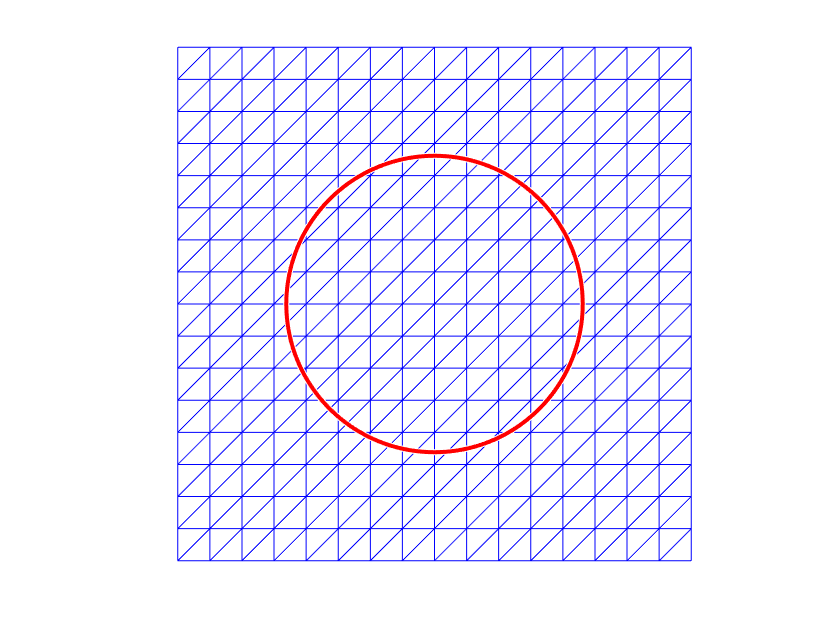}
	\end{minipage}
	\caption{  Example \ref{li1}: The meshes of domain $\Omega$ with $n$ = 4, 8, 16.}
	\label{partition_2_li1}
	\vspace*{-0.5cm}
\end{figure}

\begin{table}[H]
	\centering
	\caption{Example \ref{li1}: Projection errors and convergence rates on triangular meshes with $\beta_2 \geqslant \beta_1$.}
	\centering
	\begin{tabular}{ccccc|cccc}
		\hline
		n&$\|\Pi_h u -u \|_{1}$&order&$\|\Pi_h u -u\|$&order&$\|\Pi_h u -u \|_{1}$&order&$\|\Pi_h u -u\|$&order\\
		\hline		
		\multicolumn{5}{c|}{$\beta_1=1,\beta_2=1$}&\multicolumn{4}{c}{$\beta_1=1,\beta_2=10$}\\
		\hline
        32    & 7.3113E-01 &--  & 5.6412E-03 & --  & 1.8687E-01 &--  & 3.9526E-03 & -- \\
	    64    & 3.6655E-01 & 0.9961  & 1.4147E-03 & 1.9955  & 9.4919E-02 & 0.9772  & 1.0052E-03 & 1.9753  \\
    	128   & 1.8340E-01 & 0.9990  & 3.5394E-04 & 1.9989  & 4.7898E-02 & 0.9867  & 2.5430E-04 & 1.9829  \\
	    256   & 9.1715E-02 & 0.9998  & 8.8503E-05 & 1.9997  & 2.4054E-02 & 0.9937  & 6.3752E-05 & 1.9960  \\
	    512   & 4.5859E-02 & 0.9999  & 2.2127E-05 & 1.9999  & 1.2054E-02 & 0.9967  & 1.5964E-05 & 1.9977  \\
		\hline
		\multicolumn{5}{c|}{$\beta_1=1,\beta_2=100$}&\multicolumn{4}{c}{$\beta_1=1,\beta_2=1000$}\\
		\hline
        32    & 1.4335E-01 & --  & 1.1656E-03 & -- & 1.4328E-01 & -- & 1.1639E-03 & --  \\
    	64    & 7.2772E-02 & 0.9780  & 2.9662E-04 & 1.9744  & 7.2727E-02 & 0.9782  & 2.9615E-04 & 1.9746  \\
	    128   & 3.6759E-02 & 0.9853  & 7.5259E-05 & 1.9787  & 3.6718E-02 & 0.9860  & 7.5166E-05 & 1.9782  \\
    	256   & 1.8457E-02 & 0.9939  & 1.8916E-05 & 1.9923  & 1.8436E-02 & 0.9940  & 1.8893E-05 & 1.9922  \\
	    512   & 9.2497E-03 & 0.9967  & 4.7430E-06 & 1.9957  & 9.2387E-03 & 0.9968  & 4.7378E-06 & 1.9956  \\	
		\hline
	\end{tabular}
	\label{table1}
	\vspace*{-0.5cm}
\end{table}

\begin{table}[H]
	\centering
	\caption{Example \ref{li1}: Projection errors and convergence rates on triangular meshes with $\beta_2 \leqslant \beta_1$.}
	\centering
	\begin{tabular}{ccccc|cccc}
		\hline
		n&$\|\Pi_h u -u \|_{1}$&order&$\|\Pi_h u -u\|$&order&$\|\Pi_h u -u \|_{1}$&order&$\|\Pi_h u -u\|$&order\\
		\hline		
		\multicolumn{5}{c|}{$\beta_1=1,\beta_2=1$}&\multicolumn{4}{c}{$\beta_1=10,\beta_2=1$}\\
		\hline
		4     & 4.6119E+00 & --  & 2.7761E-01 & --  & 4.5010E+00 & --  & 2.7008E-01 & --  \\
		8     & 2.7654E+00 & 0.7379  & 8.4424E-02 & 1.7173  & 2.6983E+00 & 0.7382  & 8.1969E-02 & 1.7202  \\
		16    & 1.4465E+00 & 0.9349  & 2.2283E-02 & 1.9217  & 1.4116E+00 & 0.9347  & 2.1637E-02 & 1.9216  \\
		32    & 7.3113E-01 & 0.9844  & 5.6412E-03 & 1.9819  & 7.1533E-01 & 0.9806  & 5.5010E-03 & 1.9757  \\
		64    & 3.6655E-01 & 0.9961  & 1.4147E-03 & 1.9955  & 3.5883E-01 & 0.9953  & 1.3805E-03 & 1.9945  \\
		
		\hline
		\multicolumn{5}{c|}{$\beta_1=100,\beta_2=1$}&\multicolumn{4}{c}{$\beta_1=1000,\beta_2=1$}\\
		\hline
		4     & 4.4911E+00 & --  & 2.6950E-01 & --  & 4.4913E+00 & --  & 2.6948E-01 & --  \\
		8     & 2.6985E+00 & 0.7349  & 8.1880E-02 & 1.7187  & 2.6988E+00 & 0.7348  & 8.1875E-02 & 1.7187  \\
		16    & 1.4110E+00 & 0.9355  & 2.1616E-02 & 1.9214  & 1.4110E+00 & 0.9356  & 2.1614E-02 & 1.9214  \\
		32    & 7.1511E-01 & 0.9804  & 5.4984E-03 & 1.9750  & 7.1512E-01 & 0.9805  & 5.4982E-03 & 1.9749  \\
		64    & 3.5876E-01 & 0.9951  & 1.3801E-03 & 1.9943  & 3.5877E-01 & 0.9951  & 1.3801E-03 & 1.9942  \\
		
		\hline
	\end{tabular}
	\label{table9}
	\vspace*{-0.5cm}
\end{table}

We solve the interface problems on uniform triangular meshes (see Figure \ref{partition_2_li1}) and consider two cases corresponding to the coefficients $\beta_1 \geqslant \beta_2$ and  $\beta_2 \geqslant \beta_1$, respectively. The projection errors and numerical errors under different coefficients are reported in Tables \ref{table1} - \ref{table10}. From these numerical results, it can be observed that both the projection function $\Pi_h u$ and the numerical solution $u_h$ converge optimally to the exact solution $u$. Moreover, the convergence rates are of order $O(h^1)$ in the $H^1$ norm and $O(h^{2})$ in the $L^2$ norm, respectively. Figure \ref{Figure1_IEI} presents the condition numbers of the stiffness matrices under different coefficients. The numerical results show that the condition numbers grow approximately at the rate of $h^{-2}$.

\begin{table}[H]
	\centering
	\caption{Example \ref{li1} : Numerical errors and convergence rates on triangular meshes with $\beta_2 \geqslant \beta_1$.}
	\centering
	\begin{tabular}{ccccc|cccc}
		\hline
		n&$\3bar u -u_h \3bar_h$&order&$\| u -u_h\|$&order&$\3bar u -u_h \3bar_h$&order&$\| u -u_h\|$&order\\
		\hline		
		\multicolumn{5}{c|}{$\beta_1=1,\beta_2=1$}&\multicolumn{4}{c}{$\beta_1=1,\beta_2=10$}\\
		\hline
        32    & 8.0595E-01 & --  & 1.4526E-02 &--  & 1.6048E-01 & --  & 1.2938E-03 & --  \\
        64    & 4.0135E-01 & 1.0058  & 3.8333E-03 & 1.9220  & 8.1211E-02 & 0.9827  & 3.2776E-04 & 1.9809  \\
        128   & 1.9997E-01 & 1.0051  & 9.8170E-04 & 1.9652  & 4.0893E-02 & 0.9898  & 8.2819E-05 & 1.9846  \\
        256   & 9.9775E-02 & 1.0030  & 2.4818E-04 & 1.9839  & 2.0514E-02 & 0.9953  & 2.0791E-05 & 1.9940  \\
        512   & 4.9832E-02 & 1.0016  & 6.2377E-05 & 1.9923  & 1.0274E-02 & 0.9976  & 5.2090E-06 & 1.9968  \\
		\hline
		\multicolumn{5}{c|}{$\beta_1=1,\beta_2=100$}&\multicolumn{4}{c}{$\beta_1=1,\beta_2=1000$}\\
		\hline
        32    & 1.7893E-01 & -- & 5.8733E-03 & --  & 3.0108E-01 & --  &  2.4932E-02 & -- \\
    	64    & 8.8981E-02 & 1.0079  & 1.1728E-03 & 2.3242  & 1.1214E-01 & 1.4248  & 4.2561E-03 & 2.5504  \\
    	128   & 4.4687E-02 & 0.9937  & 2.7699E-04 & 2.0821  & 4.7289E-02 & 1.2457  & 8.0676E-04 & 2.3993  \\
	    256   & 2.2394E-02 & 0.9967  & 6.3203E-05 & 2.1318  & 2.2830E-02 & 1.0506  & 1.2078E-04 & 2.7398  \\
    	512   & 1.1218E-02 & 0.9973  & 1.5306E-05 & 2.0459  & 1.1310E-02 & 1.0133  & 2.2154E-05 & 2.4467  \\
		\hline
	\end{tabular}
	\label{table2}
	\vspace*{-0.5cm}
\end{table}

\begin{table}[H]
	\centering
	\caption{Example \ref{li1}: Numerical errors and convergence rates on triangular meshes with $\beta_2 \leqslant \beta_1$.}
	\centering
	\begin{tabular}{ccccc|cccc}
		\hline
		n&$\3bar u -u_h \3bar_h$&order&$\| u -u_h\|$&order&$\3bar u -u_h \3bar_h$&order&$\| u -u_h\|$&order\\
		\hline		
		\multicolumn{5}{c|}{$\beta_1=1,\beta_2=1$}&\multicolumn{4}{c}{$\beta_1=10,\beta_2=1$}\\
		\hline
	    4     & 4.9544E+00 & --  & 4.4953E-01 & --  & 5.2734E+00 & --  & 7.2758E-01 & --  \\
	8     & 3.0411E+00 & 0.7041  & 1.6191E-01 & 1.4733  & 3.3002E+00 & 0.6762  & 2.8049E-01 & 1.3752  \\
	16    & 1.6041E+00 & 0.9228  & 5.1470E-02 & 1.6533  & 1.7757E+00 & 0.8941  & 7.7018E-02 & 1.8647  \\
	32    & 8.0595E-01 & 0.9930  & 1.4526E-02 & 1.8251  & 9.0834E-01 & 0.9671  & 2.0201E-02 & 1.9308  \\
	64    & 4.0135E-01 & 1.0058  & 3.8333E-03 & 1.9220  & 4.5690E-01 & 0.9914  & 5.1229E-03 & 1.9794  \\
		\hline
		\multicolumn{5}{c|}{$\beta_1=100,\beta_2=1$}&\multicolumn{4}{c}{$\beta_1=1000,\beta_2=1$}\\
		\hline
	    4     & 5.5325E+00 & --  & 9.1694E-01 & --  & 6.0594E+00 & --  & 1.5111E+00 & --  \\
	8     & 3.5968E+00 & 0.6212  & 4.7295E-01 & 0.9551  & 4.4193E+00 & 0.4554  & 8.9466E-01 & 0.7562  \\
	16    & 1.8309E+00 & 0.9742  & 9.2004E-02 & 2.3619  & 2.0099E+00 & 1.1367  & 1.8175E-01 & 2.2994  \\
	32    & 9.3279E-01 & 0.9729  & 2.2754E-02 & 2.0155  & 9.7122E-01 & 1.0493  & 4.0300E-02 & 2.1731  \\
	64    & 4.6839E-01 & 0.9938  & 5.5939E-03 & 2.0242  & 4.7757E-01 & 1.0241  & 9.5550E-03 & 2.0764  \\	
		\hline
	\end{tabular}
	\label{table10}
	\vspace*{-0.3cm}
\end{table}

\begin{figure}[H]
	\centering
	\begin{minipage}[t]{0.48\linewidth}
		\centering
		\includegraphics[width=1.0\linewidth]{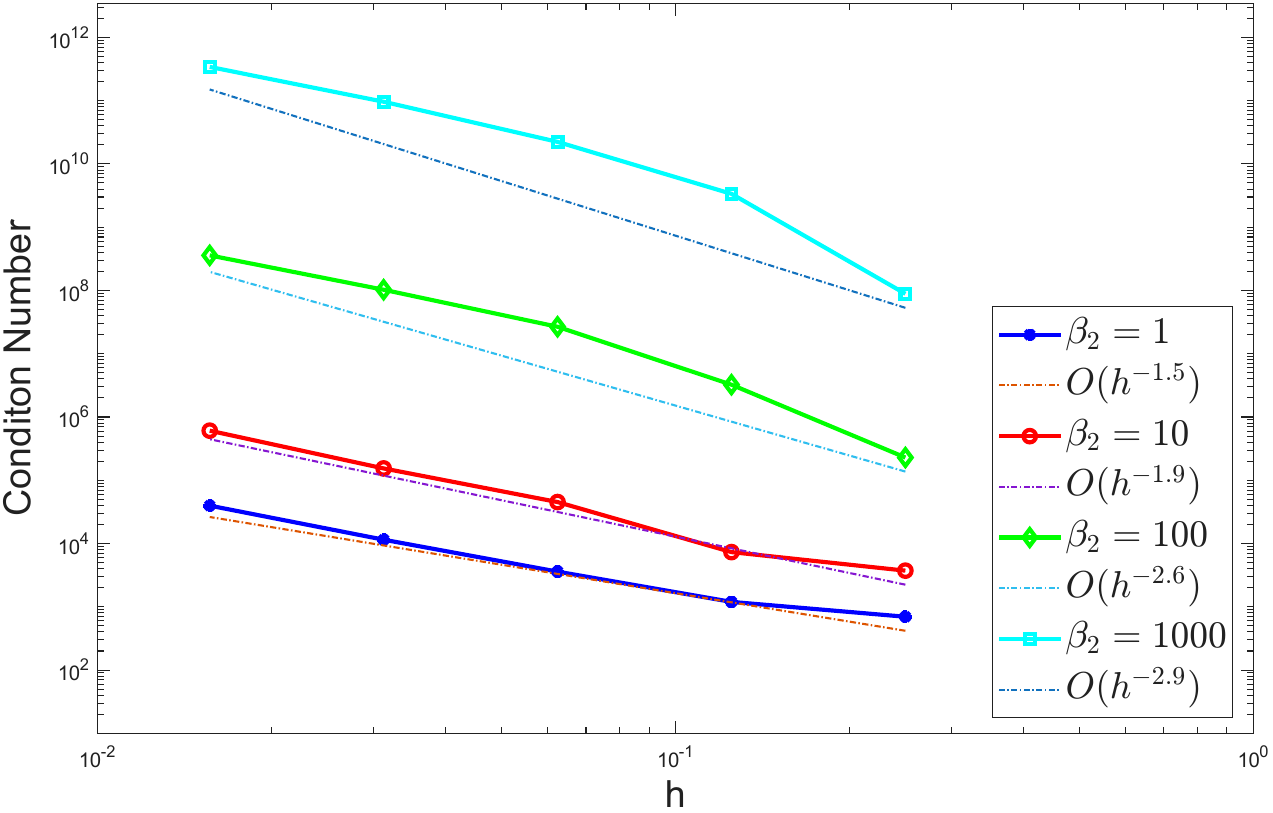}
	\end{minipage}
	\begin{minipage}[t]{0.48\linewidth}
		\centering
		\includegraphics[width=1.0\linewidth]{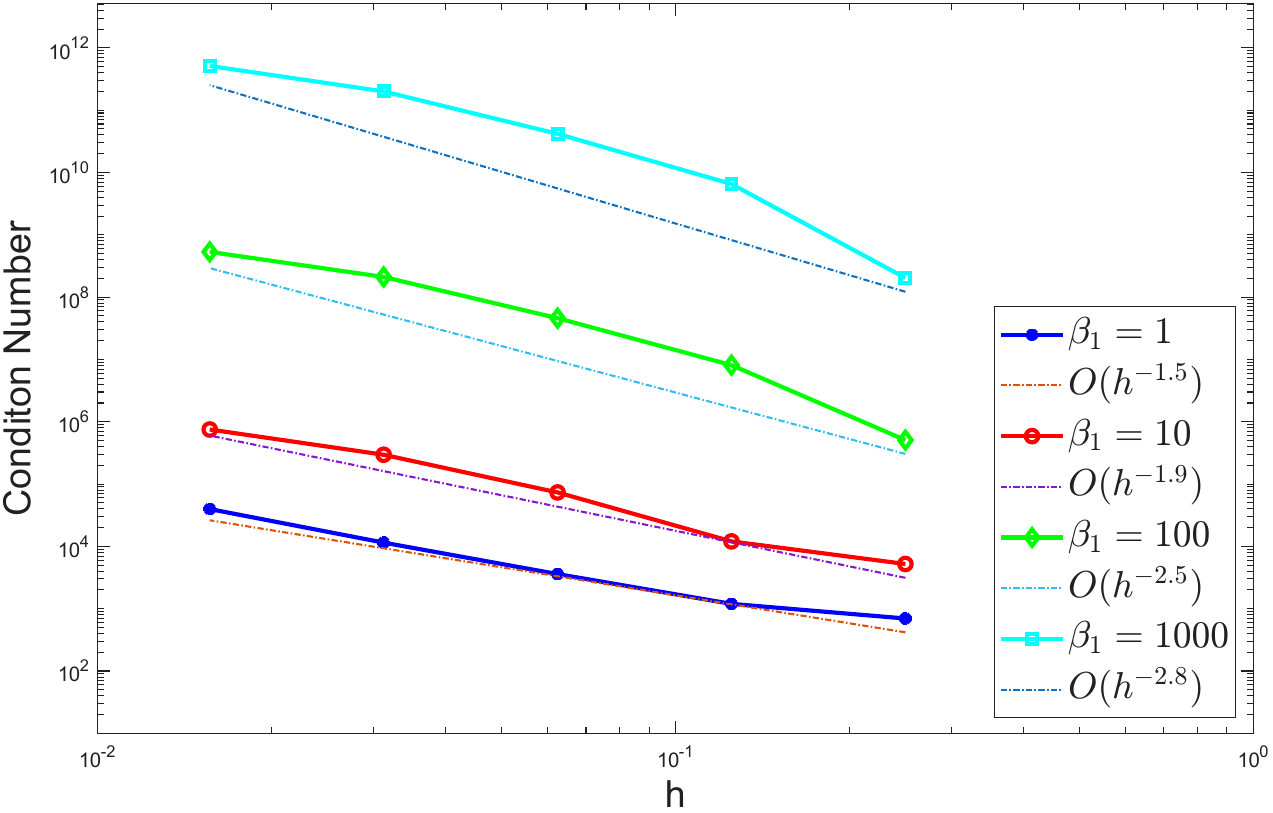}
	\end{minipage}
	\caption{Example \ref{li1}: The Condition numbers of the stiffness matrices. (Left: $\beta_1=1$,  Right: $\beta_2=1$).}
	\label{Figure1_IEI}
	\vspace*{-0.5cm}
\end{figure}

\begin{example}\label{li3}{\rm({\textbf{Accuracy verification}})}
	We consider the standard benchmark problem introduced in \cite{IFEM2010}. Let $\Omega=[-1,1]\times[-1,1]$ be the computational domain. The interface is defined by the following function:
	$$
	x^2+y^2=\frac{1}{4}.
	$$
	Take two subdomains: $\Omega_1=\Big\{(x,y)\in \Omega: x^2+y^2<\dfrac{1}{4}\Big\}$ and  $\Omega_2=\Big\{(x,y)\in \Omega: x^2+y^2>\dfrac{1}{4} \Big\}$. The exact solution is
	$$
	u = \left\{\begin{array}{lc}
		\dfrac{1}{\beta_1}(x^2+y^2)^{\tfrac{3}{2}}, & (x,y)\in \Omega_1, \\
		\dfrac{1}{\beta_2}(x^2+y^2)^{\tfrac{3}{2}}+\dfrac{1}{8}(\dfrac{1}{\beta_1}-\dfrac{1}{\beta_2}), & (x,y)\in \Omega_2.
	\end{array}
	\right.
	$$
\end{example}

	In \cite{IFEM2010}, the authors constructed piecewise linear IFE functions on interface elements by taking the edge averages as the degrees of freedom. For this example, the numerical results for the cases $\beta_1=1,\beta_2=1000$ and $\beta_1=1000,\beta_2=1$ are presented in Table \ref{table11}. By comparing the results in Table \ref{table11} with those reported in Tables 6.1 and 6.2 of \cite{IFEM2010}, we observe that numerical accuracy is comparable to that of the standard IFE methods under the same partition.

\begin{table}[H]
	\centering
	\caption{Example \ref{li3}: Projection errors and numerical errors on triangular meshes.}
	\centering
	\begin{tabular}{ccccccccc}
		\hline
		n&$\|\Pi_h u -u \|_{1}$&order&$\|\Pi_h u -u\|$&order&$\3bar u -u_h \3bar_h$&order&$\| u -u_h\|$&order\\
		\hline		
		\multicolumn{9}{c}{$\beta_1=1,\beta_2=1000$}\\
		\hline
        4     & 3.7005E-01 &--  & 7.0739E-02 & --  & 1.9451E-01 &--  & 1.1985E-02 & --  \\
        8     & 3.0933E-01 & 0.2586  & 5.4092E-02 & 0.3871  & 1.1829E-01 & 0.7175  & 3.7987E-03 & 1.6577  \\
        16    & 1.4705E-01 & 1.0728  & 2.0607E-02 & 1.3923  & 6.0072E-02 & 0.9776  & 9.8151E-04 & 1.9524  \\
        32    & 7.2122E-02 & 1.0278  & 7.0048E-03 & 1.5567  & 3.1182E-02 & 0.9460  & 2.5682E-04 & 1.9343  \\
        64    & 2.5285E-02 & 1.5122  & 1.1725E-03 & 2.5787  & 1.5873E-02 & 0.9741  &  6.5672E-05 & 1.9674  \\
        128   & 1.0834E-02 & 1.2228  & 1.8703E-04 & 2.6483  & 8.0157E-03 & 0.9857  & 1.6625E-05 & 1.9820  \\ 
		\hline
	    \multicolumn{9}{c}{$\beta_1=1000,\beta_2=1$}\\
		\hline
		4     & 1.7003E+00 & --  & 1.9483E-01 & --  & 1.3045E+00 & --  & 8.6452E-02 & --  \\
		8     & 9.4047E-01 & 0.8543  & 7.4456E-02 & 1.3877  & 6.5707E-01 & 0.9894  & 2.1841E-02 & 1.9849  \\
		16    & 4.3930E-01 & 1.0982  & 1.9498E-02 & 1.9330  & 3.3028E-01 & 0.9924  & 5.4980E-03 & 1.9900  \\
		32    & 2.1346E-01 & 1.0413  & 5.8087E-03 & 1.7471  & 1.6548E-01 & 0.9970  & 1.3784E-03 & 1.9959  \\
		64    & 1.0237E-01 & 1.0602  & 1.2412E-03 & 2.2265  & 8.2801E-02 & 0.9989  & 3.4496E-04 & 1.9985  \\
		128   & 5.0202E-02 & 1.0279  & 2.1134E-04 & 2.5541  & 4.1416E-02 & 0.9994  & 8.6286E-05 & 1.9992  \\	
		\hline
	\end{tabular}
	\label{table11}
\end{table}

\begin{example}\label{li2}{\rm({{$\mathbf{\jump{\beta}\nabla u \cdot {\bm{\tau}} \neq 0}$}})}
	Consider the interface problem discussed in \cite{IFEHigh_2024},  where the square domain $\Omega=[0.6,1.6]\times[0.21,1.21]$. The interface function is as follows:
	$$
	\Gamma=\left\{
	(x,y)\in \Omega|L(x,y)=(x^2-y^2)^2-4x^2y^2+\dfrac{1}{2}=0 \right\}.
	$$
	Let $\Omega_1=\left\{(x,y)\in \Omega| L(x,y)>0 \right\}$ and  $\Omega_2=\left\{(x,y)\in \Omega| L(x,y)<0 \right\}$. Next, let $\widetilde{L}$ denote the harmonic conjugate of $L$ given by 
	$$
	\widetilde{L}=4xy(x^2-y^2).
	$$
	We choose $f$ and $g$ such that 
	$$
	u(x,y)=\frac{\beta}{L(x,y)}+\widetilde{L}(x,y)+\frac{\beta}{L(x,y)\widetilde{L}(x,y)}.
	$$
\end{example}

\begin{figure}[H]
	\centering
	\begin{minipage}[t]{0.33\linewidth}
		\centering
		\includegraphics[width=1.1\linewidth]{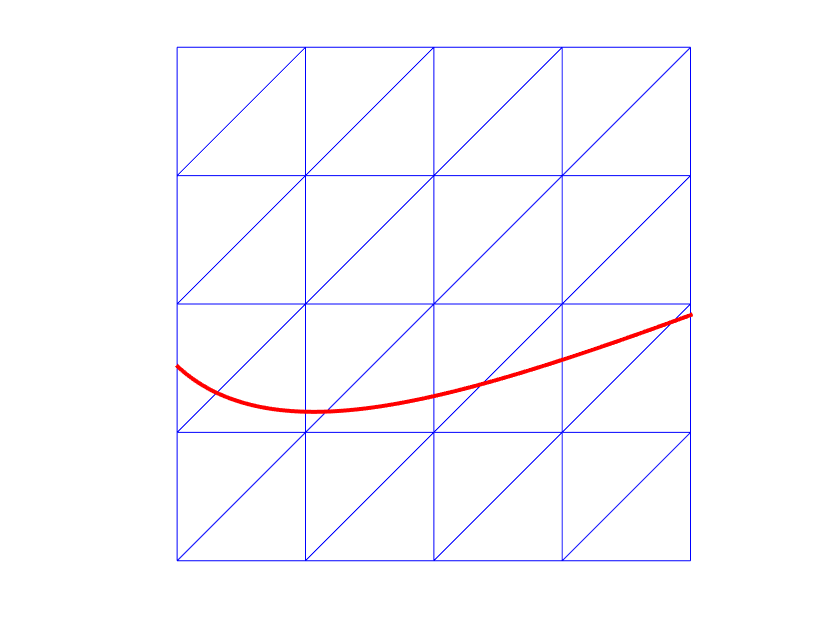}
	\end{minipage}%
	\begin{minipage}[t]{0.33\linewidth}
		\centering
		\includegraphics[width=1.1\linewidth]{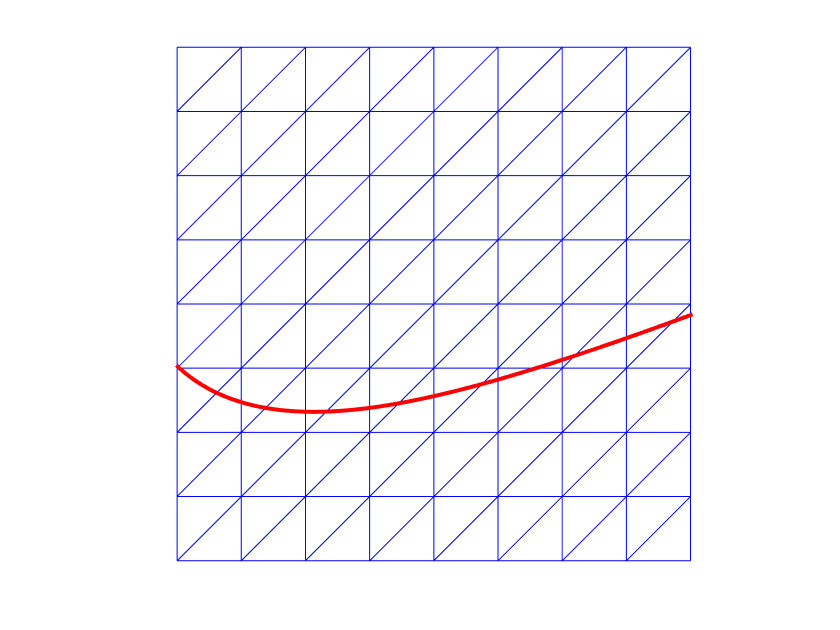}
	\end{minipage}
	\begin{minipage}[t]{0.33\linewidth}
		\centering \includegraphics[width=1.1\linewidth]{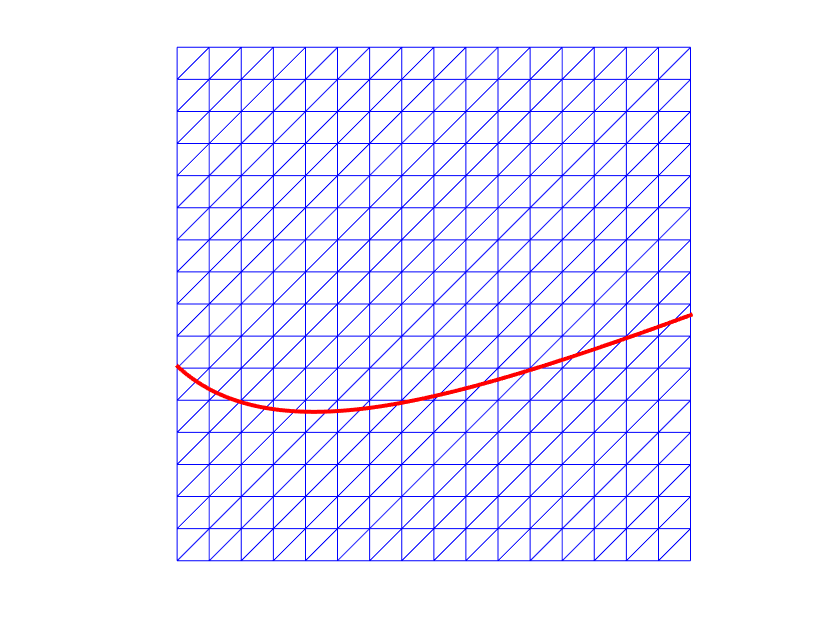}
	\end{minipage}
	\caption{  Example \ref{li2}: The meshes of domain $\Omega$ with $n$ = 4, 8, 16.}
	\label{partition_2_li2}
\end{figure}

\begin{table}[H]
	\centering
	\caption{Example \ref{li2}: Projection errors and convergence rates on triangular meshes.}
	\centering
	\begin{tabular}{ccccc|cccc}
		\hline
		n&$\|\Pi_h u -u \|_{1}$&order&$\|\Pi_h u -u\|$&order&$\|\Pi_h u -u \|_{1}$&order&$\|\Pi_h u -u\|$&order\\
		\hline		
		\multicolumn{5}{c|}{$\beta_1=1,\beta_2=1$}&\multicolumn{4}{c}{$\beta_1=1,\beta_2=10$}\\
		\hline
32    & 4.7244E+00 &--  & 1.6039E-02 & --  & 4.3881E+00 & --  & 1.5057E-02 & -- \\
64    & 2.3631E+00 & 0.9994  & 4.0113E-03 & 1.9994  & 2.1948E+00 & 0.9995  & 3.7663E-03 & 1.9992  \\
128   & 1.1817E+00 & 0.9999  & 1.0029E-03 & 1.9998  & 1.0976E+00 & 0.9998  & 9.4183E-04 & 1.9996  \\
256   & 5.9085E-01 & 1.0000  & 2.5074E-04 & 2.0000  & 5.4882E-01 & 0.9999  & 2.3548E-04 & 1.9998  \\
512   & 2.9543E-01 & 1.0000  & 6.2686E-05 & 2.0000  & 2.7442E-01 & 1.0000  & 5.8874E-05 & 1.9999  \\

		\hline
		\multicolumn{5}{c|}{$\beta_1=1,\beta_2=100$}&\multicolumn{4}{c}{$\beta_1=1,\beta_2=1000$}\\
		\hline
32    & 4.4526E+00 & --  & 1.5156E-02 &--  & 4.7280E+00 & --  & 1.6586E-02 &--  \\
64    & 2.2076E+00 & 1.0122  & 3.7762E-03 & 2.0049  & 2.5031E+00 & 0.9175  & 4.0744E-03 & 2.0253  \\
128   & 1.0987E+00 & 1.0066  & 9.4179E-04 & 2.0035  & 1.2222E+00 & 1.0343  & 9.9812E-04 & 2.0293  \\
256   & 5.4887E-01 & 1.0013  & 2.3539E-04 & 2.0004  & 6.0429E-01 & 1.0161  & 2.4728E-04 & 2.0131  \\
512   & 2.7433E-01 & 1.0005  & 5.8841E-05 & 2.0001  & 2.9700E-01 & 1.0248  & 6.1329E-05 & 2.0115  \\
		\hline
	\end{tabular}
	\label{table6}
\end{table}

\begin{table}[H]
	\centering
	\caption{Example \ref{li2}: Numerical errors and convergence rates on triangular meshes.}
	\centering
	\begin{tabular}{ccccc|cccc}
		\hline
		n&$\3bar u -u_h \3bar_h$&order&$\| u -u_0\|$&order&$\3bar u -u_h \3bar_h$&order&$\| u -u_h\|$&order\\
		\hline		
		\multicolumn{5}{c|}{$\beta_1=1,\beta_2=1$}&\multicolumn{4}{c}{$\beta_1=1,\beta_2=10$}\\
		\hline
32    & 5.1660E+00 &-- & 2.2684E-02 & -- & 4.8087E+00 &--  & 2.0643E-02 & --  \\
        64    & 2.5853E+00 & 0.9987  & 5.8433E-03 & 1.9568  & 2.4060E+00 & 0.9990  & 5.2895E-03 & 1.9644  \\
        128   & 1.2928E+00 & 0.9998  & 1.4853E-03 & 1.9760  & 1.2031E+00 & 0.9999  & 1.3397E-03 & 1.9812  \\
        256   & 6.4638E-01 & 1.0001  & 3.7463E-04 & 1.9872  & 6.0155E-01 & 1.0001  & 3.3739E-04 & 1.9894  \\
        512   & 3.2317E-01 & 1.0001  & 9.4089E-05 & 1.9934  & 3.0076E-01 & 1.0001  & 8.4680E-05 & 1.9943  \\
		\hline
		\multicolumn{5}{c|}{$\beta_1=1,\beta_2=100$}&\multicolumn{4}{c}{$\beta_1=1,\beta_2=1000$}\\
		\hline
32    & 4.8839E+00 & -- & 2.0936E-02 & --  & 9.2708E+00 & --& 3.1771E-02 & --  \\
		64    & 2.4184E+00 & 1.0140  & 5.3325E-03 & 1.9731  & 3.4381E+00 & 1.4311  & 6.9808E-03 & 2.1863  \\
		128   & 1.2045E+00 & 1.0056  & 1.3419E-03 & 1.9905  & 1.4184E+00 & 1.2773  & 1.4754E-03 & 2.2423  \\
		256   & 6.0168E-01 & 1.0014  & 3.3750E-04 & 1.9913  & 6.6666E-01 & 1.0893  & 3.5938E-04 & 2.0375  \\
		512   & 3.0069E-01 & 1.0007  & 8.4657E-05 & 1.9952  & 3.2281E-01 & 1.0463  & 8.8456E-05 & 2.0225  \\
		
		\hline
	\end{tabular}
	\label{table7}
\end{table}

In contrast to many existing examples in which the tangential flux jump vanishes,
$([\![\beta]\!]\nabla u \cdot \bm{\tau} = 0$ on $\Gamma)$,  
this example involves a nonzero one, i.e., 
$[\![\beta]\!]\nabla u \cdot \bm{\tau} \neq 0$ on $\Gamma$. In \cite{NCFEM_2023}, the authors pointed out that the numerical examples with $[\![\beta]\!]\nabla u \cdot \bm{\tau} \neq 0$ on $\Gamma$ may affect the performance of the proposed method, that is, optimal convergence may not always be guaranteed. The numerical results are shown in Tables \ref{table6} - \ref{table7} based on the triangular meshes (see Figure \ref{partition_2_li2} ). From these tables, we observe that the approximate capability of the IFE function and the convergence orders remain optimal in both the $H^1$ norm and $L^2$ norm. Figure \ref{Figure3_IEI} shows that, the condition numbers of the stiffness matrices exhibit an approximately $h^{-2}$ growth trend as the mesh is refined.

\begin{figure}[H]
	\centering
	\begin{minipage}[t]{0.48\linewidth}
		\centering
		\includegraphics[width=1.0\linewidth]{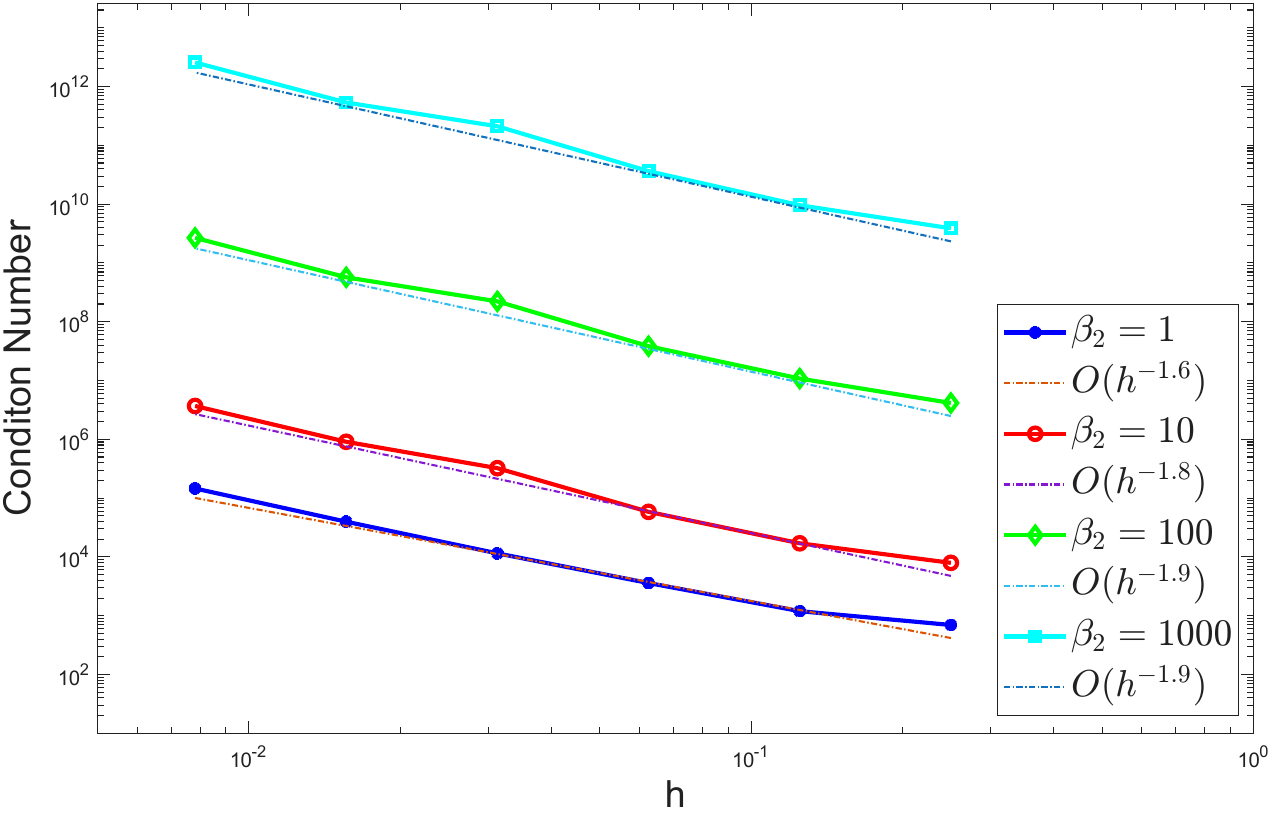}
	\end{minipage}
	\caption{Example \ref{li2}: The Condition numbers of the stiffness matrices with $\beta_1=1$.}
	\label{Figure3_IEI}
\end{figure}

\begin{example}\label{li4}{\rm({\textbf{Small-cut element}})}
 We consider an interface problem defined on the square domain $[-1,1]\times[-1,1]$. The interface function is defined as
 $$y-\delta=0,$$ 
 where 
 $$\delta = \dfrac{1}{40*2^{\ell}}, \quad \ell=0,1,2,\cdots,8.$$
 Let $\Omega_1=\left\{(x,y)\in \Omega| y > \delta \right\}$ and  $\Omega_2=\left\{(x,y)\in \Omega| y < \delta \right\}$. The exact solution is given by
 $$
 u = \left\{\begin{array}{lc}
 	\dfrac{1}{\beta_1}(y-\delta)\sin(\pi x)\sin(\pi y), & (x,y)\in \Omega_1, \\
 	\dfrac{1}{\beta_2}(y-\delta)\sin(\pi x)\sin(\pi y), & (x,y)\in \Omega_2.
 \end{array}
 \right.
 $$
  
\end{example}

\begin{figure}[H]
	\centering
	\begin{minipage}[t]{0.3\linewidth}
		\centering
		\includegraphics[width=1.0\linewidth]{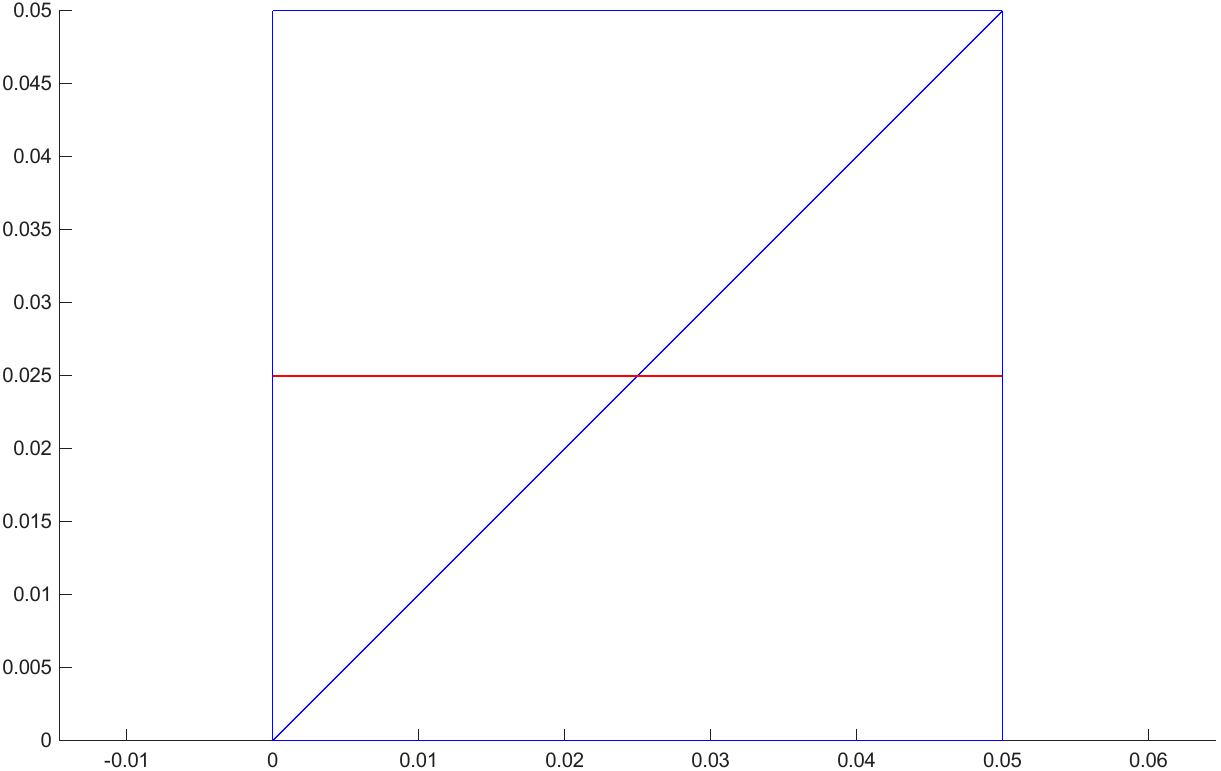}
	\end{minipage}
	\begin{minipage}[t]{0.3\linewidth}
		\centering
		\includegraphics[width=1.0\linewidth]{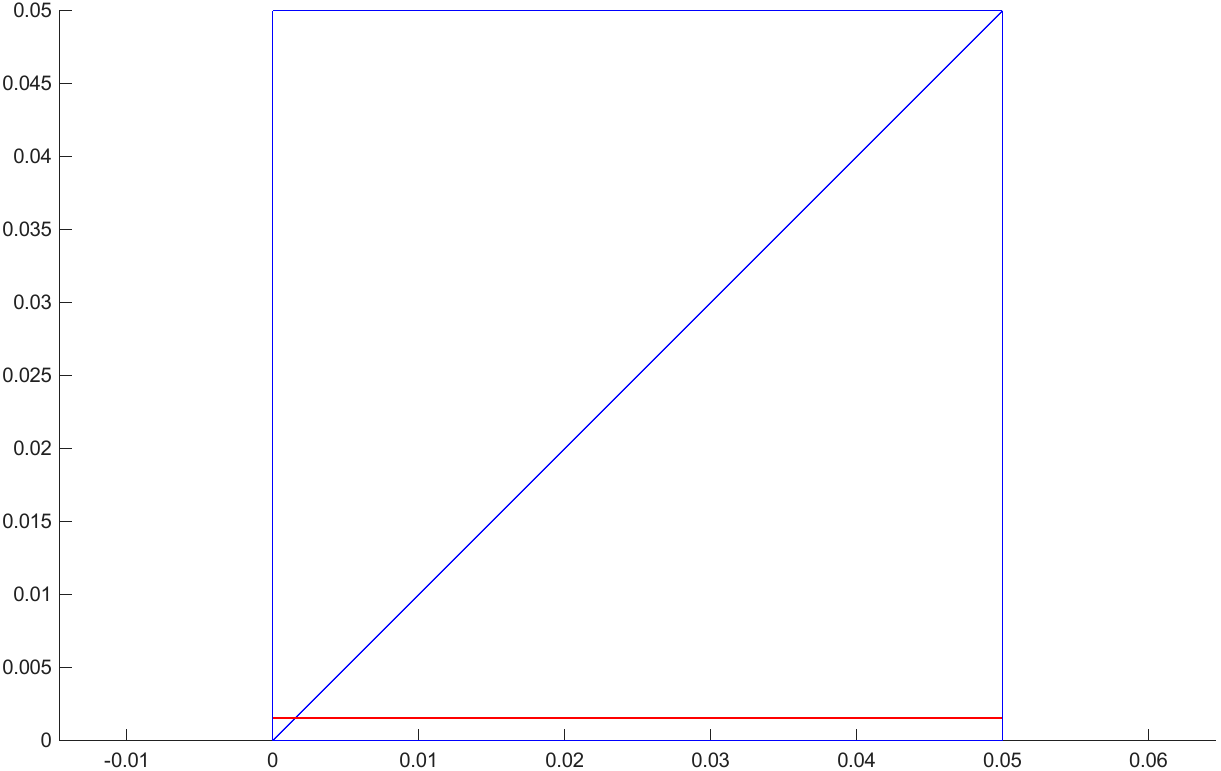}
	\end{minipage}
	\begin{minipage}[t]{0.3\linewidth}
		\centering
		\includegraphics[width=1.0\linewidth]{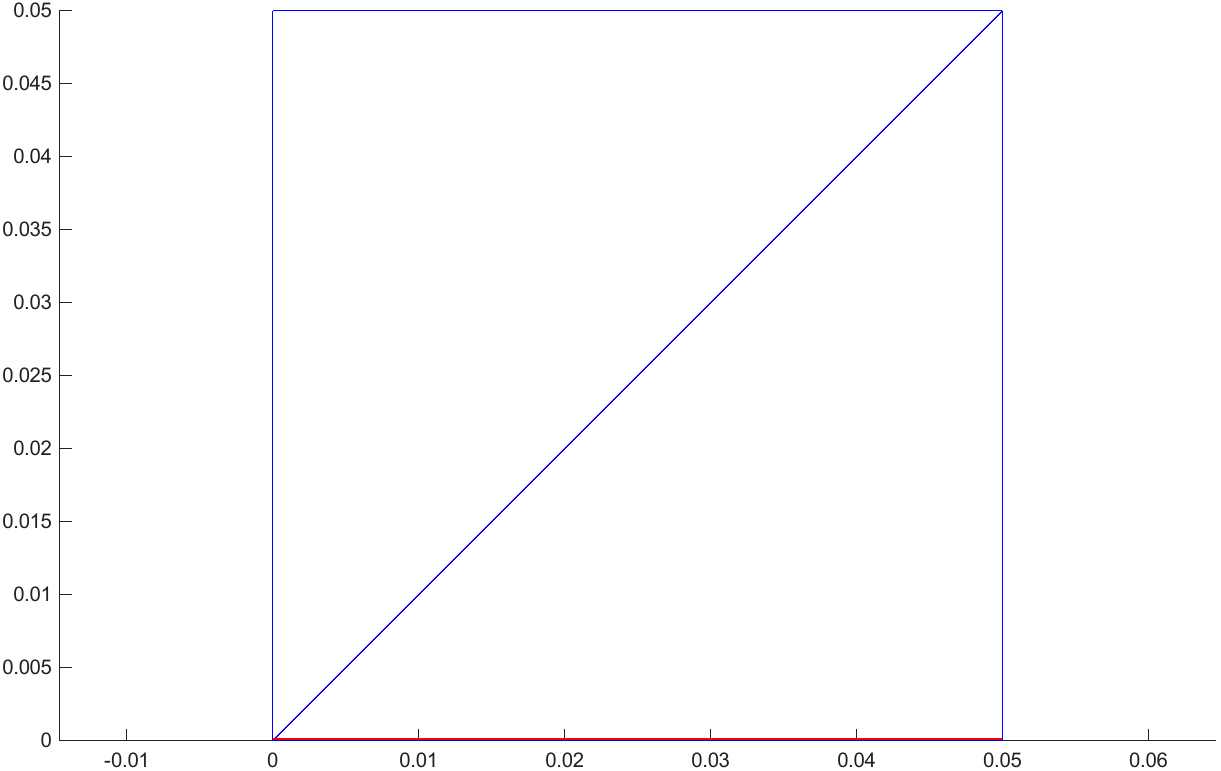}
	\end{minipage}
	\caption{Example \ref{li4}: The local partition  with $\delta= 1/(40*2^{\ell})$. (Left: $\ell=0$, Middle: $\ell=4$, Right:  $\ell=8$).}
	\label{Figure2_small_cut_partition}
\end{figure}

\begin{figure}[H]
	\centering
	\begin{minipage}[t]{0.3\linewidth}
		\centering
		\includegraphics[width=1.0\linewidth]{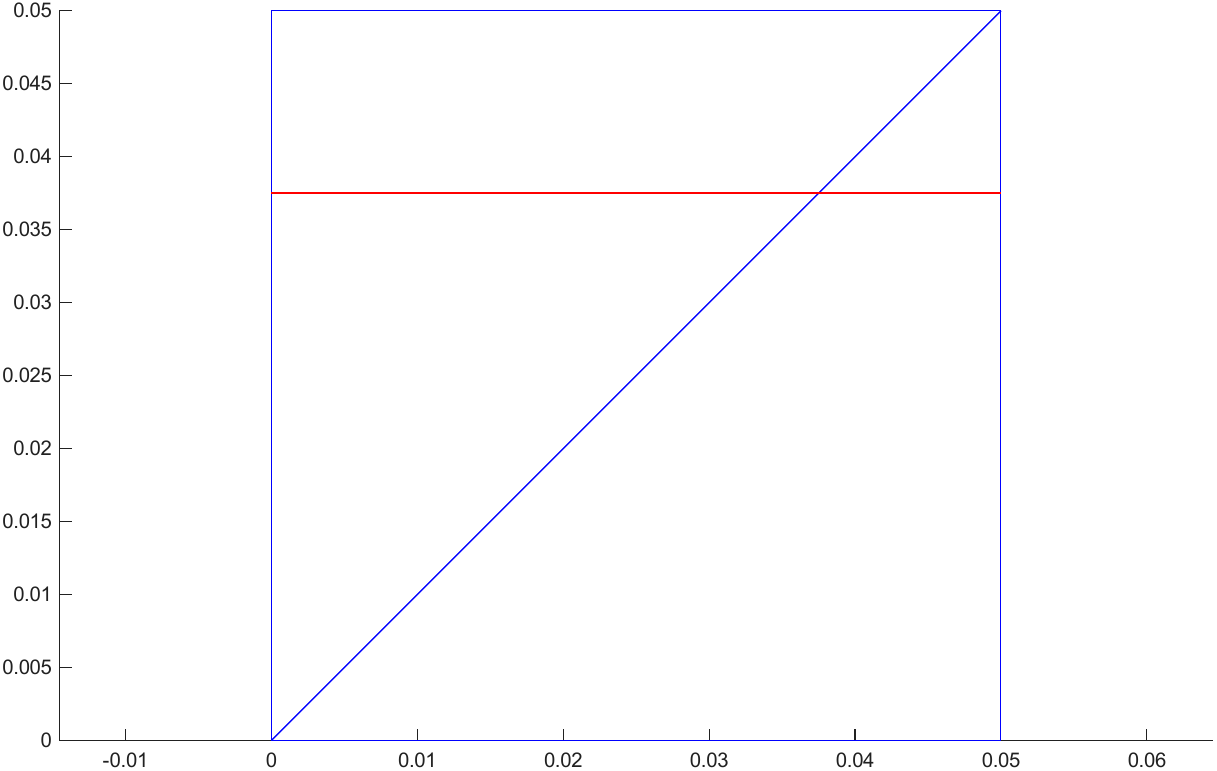}
	\end{minipage}
	\begin{minipage}[t]{0.3\linewidth}
		\centering
		\includegraphics[width=1.0\linewidth]{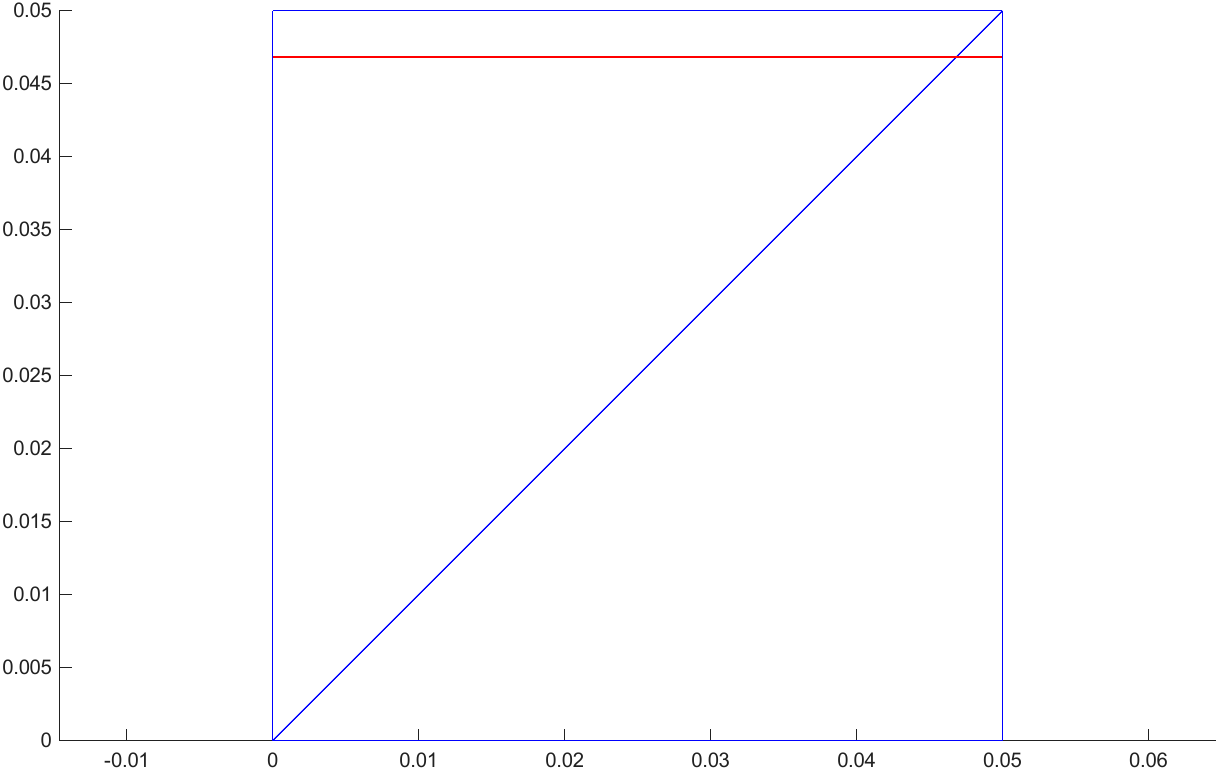}
	\end{minipage}
	\begin{minipage}[t]{0.3\linewidth}
		\centering
		\includegraphics[width=1.0\linewidth]{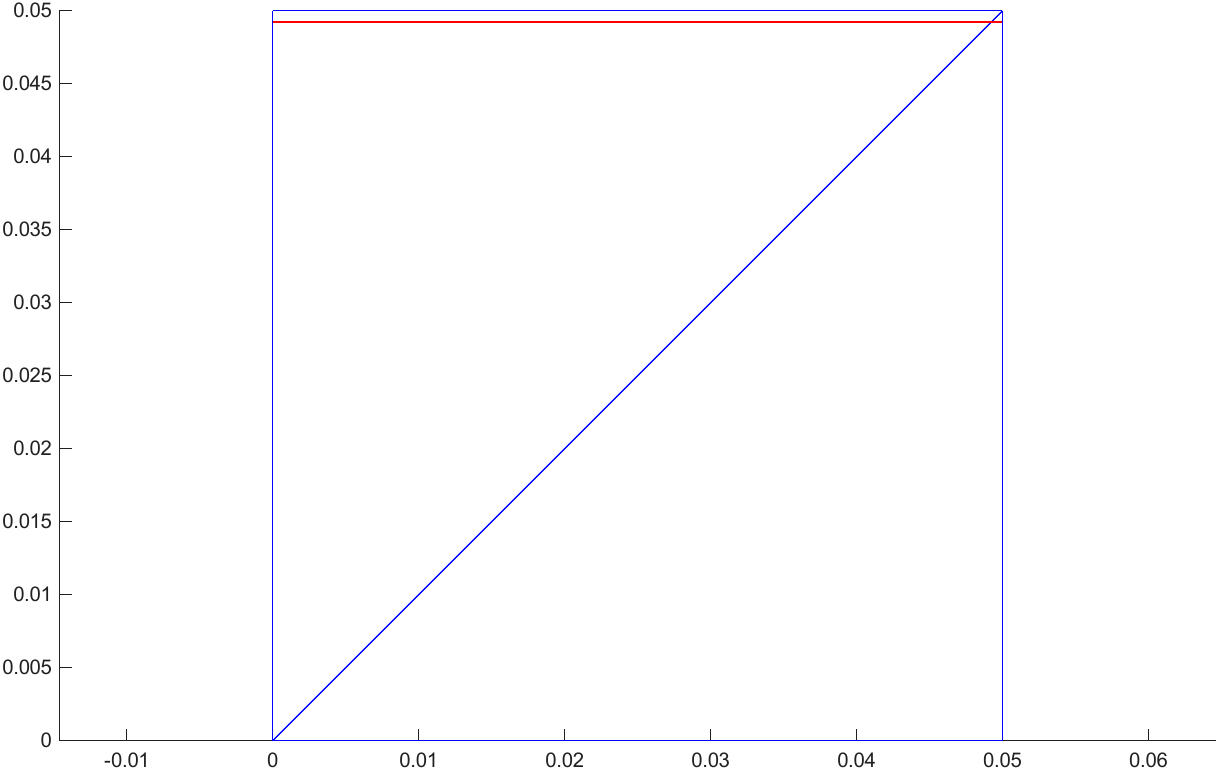}
	\end{minipage}
	\caption{Example \ref{li4}: The local partition  with $\delta=1/20 - 1/(40*2^{\ell})$. (Left: $\ell=1$, Middle: $\ell=3$, Right:  $\ell=5$).}
	\label{Figure3_small_cut_partition}
\end{figure}

\begin{figure}[H]
	\centering
	\begin{minipage}[t]{0.48\linewidth}
		\centering
		\includegraphics[width=1.0\linewidth]{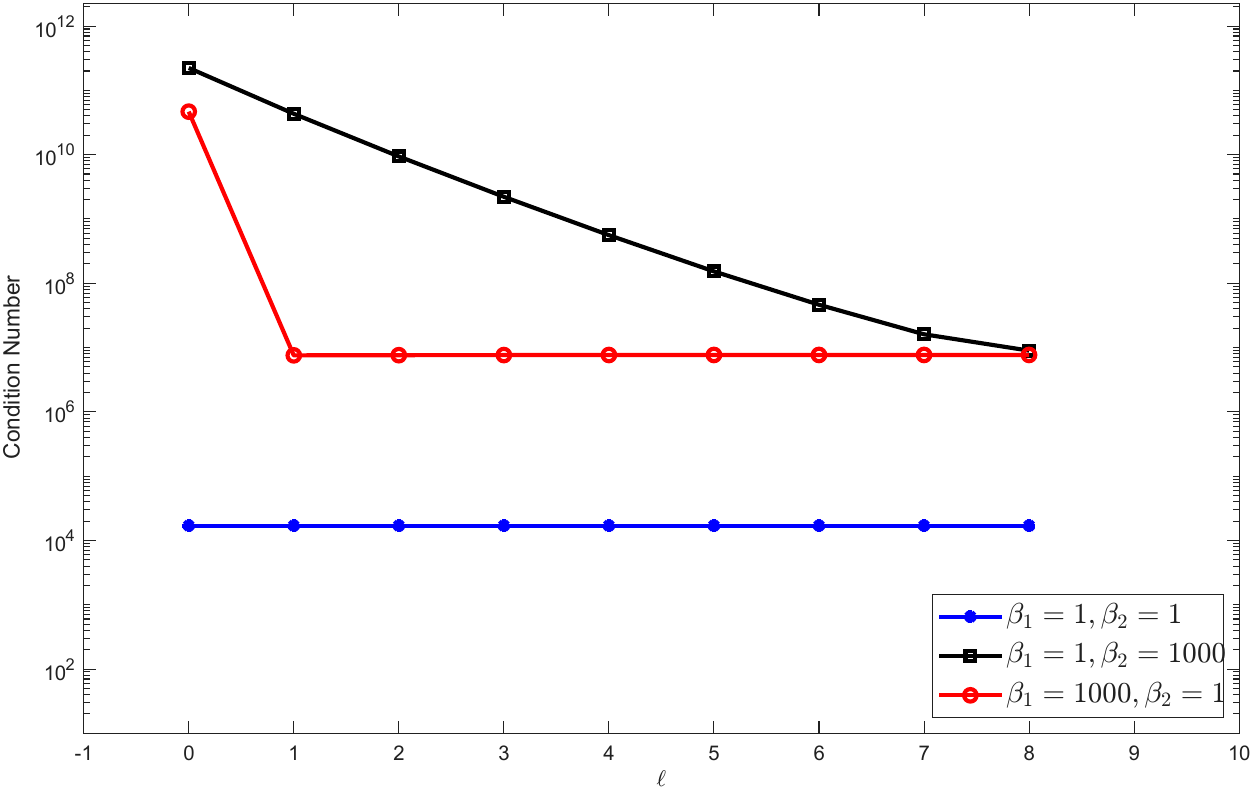}
	\end{minipage}
	\begin{minipage}[t]{0.48\linewidth}
		\centering
		\includegraphics[width=1.0\linewidth]{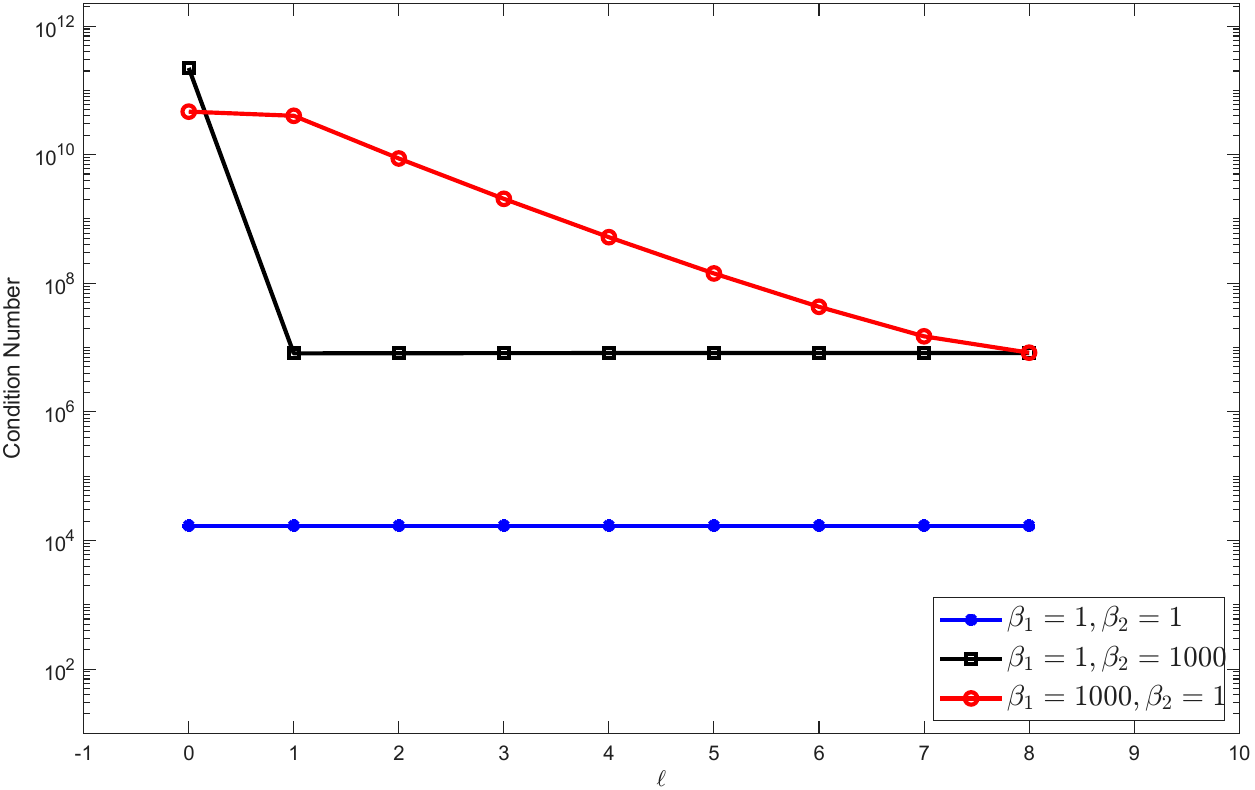}
	\end{minipage}
	\caption{Example \ref{li4}: The Condition numbers with $n=40$. (Left: $\delta= 1/(40*2^{\ell})$, Right: $\delta=1/20 - 1/(40*2^{\ell})$).}
	\label{Figure2_small_cut}
\end{figure}

In this example, we further investigate whether the condition numbers of the stiffness matrices depend on the position of the interface within an element, namely, whether they are affected by small-cut elements. As the parameter $\ell$ increases, the portion of subdomain $\Omega_2$ inside the interface element gradually decreases (see Figure \ref{Figure2_small_cut_partition}). In this case, the IFE function \eqref{IFE_function2} is employed. Taking $n=40$, we examine the condition numbers under three different coefficients: $\beta_1=\beta_2$, $\beta_1>\beta_2$, and $\beta_1<\beta_2$. It can be seen from Figure \ref{Figure2_small_cut} that the condition numbers are unaffected by small-cut elements when $\beta_1=\beta_2$ and $\beta_1>\beta_2$. In contrast, the condition numbers decrease as $\ell$ increases  when $\beta_1<\beta_2$. 

We also investigate the condition numbers for three different coefficients in the case 
$$\delta=\dfrac{1}{20} - \dfrac{1}{40*2^{\ell}}, \quad \ell=0,1,2,\cdots,8.$$ 
Under this interface configuration, as the parameter $\ell$ increases, the portion of subdomain $\Omega_1$ inside the interface element gradually decreases (see Figure \ref{Figure3_small_cut_partition}), and the IFE function \eqref{IFE_function1} is used. The numerical results in Figure \ref{Figure2_small_cut} show that, when $\beta_1=\beta_2$ and $\beta_1 < \beta_2$, the condition numbers are uninfluenced by the small-cut elements. However, when $\beta_1 > \beta_2$, the condition numbers decrease as $\ell$ increases. This observation is consistent with the results obtained for the previous interface $\delta= 1/(40*2^{\ell})$. This phenomenon is due to the fact that the integral values of the constructed IFE functions over the interface elements decrease, which exhibits a trend consistent with that of the condition numbers. Although the condition numbers vary, the results in Table \ref{table12} demonstrate that the convergence is not affected, and the convergence rates are still optimal.

\begin{table}[H]
	\centering
	\caption{Example \ref{li4}: Projection errors and numerical errors on triangular meshes with $\beta_1=1$ and $\beta_2=1000$.}
	\centering
	\begin{tabular}{ccccccccc}
		\hline
		n&$\|\Pi_h u -u \|_{1}$&order&$\|\Pi_h u -u\|$&order&$\3bar u -u_h \3bar_h$&order&$\| u -u_h\|$&order\\
		\hline		
		\multicolumn{9}{c}{$\ell=1$}\\
		\hline
            10    & 6.5025E-01 & --  & 4.9276E-02 & --  & 4.9492E-01 & --  & 1.2234E-02 & --  \\
        20    & 3.3329E-01 & 0.9642  & 1.3033E-02 & 1.9187  & 2.5065E-01 & 0.9815  & 3.0997E-03 & 1.9807  \\
        30    & 2.2324E-01 & 0.9883  & 5.8554E-03 & 1.9733  & 1.6751E-01 & 0.9940  & 1.3811E-03 & 1.9938  \\
        40    & 1.6772E-01 & 0.9941  & 3.3064E-03 & 1.9867  & 1.2575E-01 & 0.9968  & 7.7754E-04 & 1.9969  \\
        50    & 1.3429E-01 & 0.9963  & 2.1199E-03 & 1.9919  & 1.0065E-01 & 0.9977  & 4.9785E-04 & 1.9980  \\
        60    & 1.1196E-01 & 0.9973  & 1.4736E-03 & 1.9945  & 8.3905E-02 & 0.9981  & 3.4582E-04 & 1.9985  \\
        70    & 9.5999E-02 & 0.9978  & 1.0833E-03 & 1.9959  & 7.1939E-02 & 0.9982  & 2.5413E-04 & 1.9985  \\
        
		\hline
		\multicolumn{9}{c}{$\ell=4$}\\
		\hline
	    10    & 6.4232E-01 & --  & 4.8648E-02 & --  & 4.8922E-01 & --  & 1.2090E-02 & -- \\
	    20    & 3.2926E-01 & 0.9641  & 1.2870E-02 & 1.9184  & 2.4777E-01 & 0.9815  & 3.0634E-03 & 1.9806  \\
	    30    & 2.2054E-01 & 0.9884  & 5.7822E-03 & 1.9733  & 1.6557E-01 & 0.9941  & 1.3649E-03 & 1.9938  \\
	    40    & 1.6568E-01 & 0.9942  & 3.2649E-03 & 1.9867  & 1.2428E-01 & 0.9971  & 7.6844E-04 & 1.9970  \\
	    50    & 1.3265E-01 & 0.9965  & 2.0933E-03 & 1.9920  & 9.9466E-02 & 0.9983  & 4.9200E-04 & 1.9982  \\
	    60    & 1.1059E-01 & 0.9977  & 1.4551E-03 & 1.9947  & 8.2906E-02 & 0.9988  & 3.4174E-04 & 1.9988  \\
    	70    & 9.4812E-02 & 0.9984  & 1.0697E-03 & 1.9962  & 7.1071E-02 & 0.9992  & 2.5111E-04 & 1.9991  \\
		\hline
		\multicolumn{9}{c}{$\ell=8$}\\
		\hline
	    10    & 6.4127E-01 & --  & 4.8564E-02 &--  & 4.8847E-01 & --  & 1.2071E-02 & -- \\
     	20    & 3.2872E-01 & 0.9641  & 1.2848E-02 & 1.9183  & 2.4740E-01 & 0.9814  & 3.0586E-03 & 1.9806  \\
	    30    & 2.2019E-01 & 0.9884  & 5.7725E-03 & 1.9733  & 1.6533E-01 & 0.9941  & 1.3628E-03 & 1.9938  \\
    	40    & 1.6541E-01 & 0.9942  & 3.2594E-03 & 1.9867  & 1.2410E-01 & 0.9971  & 7.6723E-04 & 1.9969  \\
	    50    & 1.3243E-01 & 0.9965  & 2.0898E-03 & 1.9920  & 9.9316E-02 & 0.9983  & 4.9123E-04 & 1.9982  \\
	    60    & 1.1041E-01 & 0.9977  & 1.4526E-03 & 1.9947  & 8.2780E-02 & 0.9989  & 3.4120E-04 & 1.9988  \\
	    70    & 9.4659E-02 & 0.9984  & 1.0679E-03 & 1.9962  & 7.0963E-02 & 0.9992  & 2.5071E-04 & 1.9991  \\
		\hline
	\end{tabular}
	\label{table12}
\end{table}

\section{Conclusion}
In this paper, the linear explicit IFE functions are constructed to exactly satisfy the interface conditions on the actual interface. The proposed IFE functions avoids solving local linear systems on interface elements and therefore does not introduce additional computational cost. We prove that the constructed IFE functions achieve optimal approximation properties. Based on these IFE functions, we develop an immersed SIPDG method for second-order elliptic interface problems with homogeneous interface conditions and prove that optimal error estimates are derived in both the $H^1$ norm and $L^2$ norm. Numerical experiments are presented to confirm the theoretical analysis and demonstrate the effectiveness of the numerical scheme. In particular, the numerical results show that the condition numbers of the stiffness matrices are insensitive to small-cut elements. In future work, we plan to extend the linear explicit IFE functions proposed in this paper to three-dimensional interface problems.

\bibliographystyle{siam}
\bibliography{lib}

\newpage
\end{document}
\endinput